\def\newaliasedtheorem#1[#2]#3{
	\newaliascnt{#1@alt}{#2}
	\newtheorem{#1}[#1@alt]{#3}
	\expandafter\newcommand\csname #1@altname\endcsname{#3}
}
\theoremstyle{plain}
\newtheorem{thm}{Theorem}[section]
\theoremstyle{remark}
\theoremstyle{definition}
\theoremstyle{remark}
\newtheorem*{NREM}{Remark}
\numberwithin{equation}{section}
\def\eps{\varepsilon}
\def\R{\mathbb R}
\def\C{{\mathbb C}}
\def\N{{\mathbb N}}
\def\Z{{\mathbb Z}}
\def\T{{\mathbb T}}
\DeclareMathOperator{\re}{Re}
\newcommand{\e}{\varepsilon}
\DeclareMathOperator{\dv}{div}
\DeclareMathOperator{\Lip}{Lip}
\DeclareMathOperator{\curl}{curl}
\DeclareMathOperator{\Ree}{Re}
\DeclareMathOperator{\rank}{rank}
\DeclareMathOperator{\diam}{diam}
\DeclareMathOperator{\Sym}{Sym}
\DeclareMathOperator{\cof}{cof}
\DeclareMathOperator{\loc}{loc}
\DeclareMathOperator{\id}{id}
\DeclareMathOperator{\Reg}{Reg}
\DeclareMathOperator{\Sing}{Sing}
\newcommand{\s}{\mathcal{S}}
\DeclareMathOperator{\spt}{spt}
\DeclareMathOperator{\dist}{d}
\DeclareMathOperator{\ran}{ran}
\title{Hyperbolic regularization effects for degenerate elliptic equations}
\author[X. Lamy]{Xavier Lamy}
\address{Xavier Lamy
	\hfill\break  Institut Universitaire de France (IUF)
	\& Univ Toulouse, CNRS, IMT, Toulouse, France.}
\email{xavier.lamy@math.univ-toulouse.fr}
\author[R. Tione]{Riccardo Tione}
\address{Riccardo Tione
	\hfill\break Università degli Studi di Torino, dipartimento di Matematica "Giuseppe Peano", Via Carlo Alberto 10, 10123 Torino, Italy}
\email{riccardo.tione@unito.it}
\begin{document}

\begin{abstract}
This paper investigates the regularity of Lipschitz solutions $u$ to the general two-dimensional equation $\dv (G(Du))=0$ with highly degenerate ellipticity.\ Just assuming strict monotonicity of the field $G$ and heavily relying on the differential inclusions point of view, 
we establish a pointwise gradient localization theorem and we show that the singular set of nondifferentiability points of $u$ is $\mathcal{H}^1$-negligible.\ As a consequence, we derive new sharp partial $C^1$ regularity results under the assumption that $G$ is degenerate only on curves.\
This is done by exploiting the hyperbolic structure of the equation along these curves, where the loss of regularity is compensated  using tools from the theories of Hamilton-Jacobi equations and scalar conservation laws.\ Our analysis recovers and extends all the previously known results, where the degeneracy set was required to be zero-dimensional.
\end{abstract}

\maketitle

\section{Introduction}

The topic of this work is the regularity of two dimensional Lipschitz solutions to 
the 
elliptic 
equation
\begin{equation}\label{e:G}
\dv(G(Du))=0 \quad\text{ in }B_1\subset\R^2\,,
\end{equation}
in the sense of distributions,
for a general continuous field $G\colon\R^2\to\R^2$
which is strictly monotone:
\begin{align}\label{e:monotG}
(G(b)-G(a),b-a)>0\quad\forall a\neq b\in\R^2\,.
\end{align}
Specifically, we focus on  the fundamental question:
\begin{align}\label{Q}\tag{Q}
\text{are Lipschitz solutions of \eqref{e:G} }C^1,
\text{ or at least partially }C^1\text{ ?}
\end{align}
Here, partially $C^1$ means $C^1$ in the complement of a closed Lebesgue-negligible set.
For a continuous field $G$, \eqref{e:monotG} is, up to a change of sign, the weakest assumption under which Question \eqref{Q} is reasonable.\ Indeed, if $G(b)-G(a)\perp b-a$ for some $b\neq a$,
then there exist Lipschitz solutions $u$ of \eqref{e:G} with wildly discontinuous gradient, namely $u(x)=\left(x,\frac{b+a}{2}\right)+f\left(\left(x,\frac{b-a}{2}\right)\right)$ for any $f\in \Lip(\R)$ with $f'\in \lbrace \pm 1\rbrace$ a.e..

\medskip

Of particular and classical interest are gradient fields
 $G=D f$ with $f$ strictly convex: in that case, 
\begin{equation}\label{e:Df}
\dv(Df(Du))=0 \quad\text{ in }B_1,
\end{equation}
is the Euler-Lagrange equation
satisfied by minima of the functional
\begin{equation}\label{e:intf}
\int_{B_1}f(Du)\, dx.
\end{equation}
It is well-known that solutions are smooth provided $f$ 
is smooth and uniformly elliptic, that is, 
\begin{equation}\label{e:qua}
\Lambda^{-1}\id \le D^2f(\xi) \le \Lambda \id, \text{ in the sense of quadratic forms}.
\end{equation}
In higher dimensions, this is due to the celebrated theorems of De Giorgi-Nash-Moser \cite{DeGiorgi1957,Nash1958,Moser1960}, while in two dimensions it has been known since the work of Morrey \cite{morrey38}.
Many important and natural problems do not, however, enjoy the strong ellipticity
 property \eqref{e:qua}.
A prominent example is that of the $p$-Laplace equation corresponding to $f(\xi)=|\xi|^p$.
More generally, 
problems where \eqref{e:qua} fails 
at a polynomial rate
for $|\xi|\to 0$ and $|\xi|\to \infty$
have been studied extensively, see the review \cite{MingioneDarkSide}.
We are aware of very few results going beyond this type of degeneracy in higher dimensions: see \cite{Mooney2020,Marino2023,lledos24},
and \cite{CF14a,CF14b} for related regularity questions when $f$ is convex but not strictly convex.

Note that, due to the Lipschitz assumption, \eqref{Q} focuses
on controlling \emph{oscillations} of the gradient $Du$.
A similar question could be asked about \emph{concentration} effects, namely starting from a $W^{1,p}(B_1)$ function.\
Some results in this direction, valid in all dimensions, can be found in \cite{DeFilippis2023,DeFilippis2025} and references therein.\
Recent counterexamples  \cite{CT,Johansson2024} emphasize the necessity of careful assumptions on $u$ and $G$,
but the Lipschitz property of $u$ can be inferred naturally in many situations, 
see e.g. \cite[\textsection 12.4]{Gilbarg1977} and \cite{Chipot1986}.

\subsection{Previous two-dimensional results}

Morrey's original result \cite{morrey38} admits a first far-reaching generalization to $\delta$-monotone fields, which satisfy a more quantitative version of \eqref{e:monotG}, see \cite[\textsection 16]{Astala2008}.\ This covers for instance the $p$-Laplace equation, but also many more degenerate cases: for instance \eqref{e:qua} may fail on a large set of $\xi$'s as long as the eigenvalues of $D^2f(\xi)$ are comparable.\

\medskip

More generally and, to the best of our knowledge, for the first time, regularity results in the absence of any quantitative assumptions on the strict monotonicity \eqref{e:monotG} have been established by D.~De Silva and O.~Savin
in  \cite{dSS10}.
For any strictly convex $f$, 
their results depend only on the closed sets $\mathcal D_-$ and $\mathcal D_+$ 
of  values $\xi\in\R^2$ 
where ellipticity fails from below or from above:
\begin{align*}
\mathcal D_-
&
=\bigcap\Big\lbrace X \subset\R^2\text{ closed}\colon
\exists\lambda >0,\, D^2 f \geq\lambda\id\text{ in }\R^2\setminus X\Big\rbrace\,,
\\
\mathcal D_+
&
=\bigcap\Big\lbrace X \subset\R^2\text{ closed}\colon
\exists\Lambda >0,\, D^2 f \leq\Lambda\id\text{ in }\R^2\setminus X\Big\rbrace\,.
\end{align*}
The inequalities are in the sense of distributions with values into symmetric matrices.
These definitions have natural extensions to any strictly monotone field $G$, see \eqref{e:Dpm}.
It may be useful to have a picture of these  sets in simple examples:
\begin{align*}
(\mathcal D_-,\mathcal D_+)
=
\begin{cases}
(\emptyset,\lbrace 0\rbrace)
&\text{ if }f(\xi)=|\xi|^p\text{ for }1<p<2\,,
\\
(\lbrace 0\rbrace,\emptyset)
&\text{ if }f(\xi)=|\xi|^p\text{ for }p>2\,,
\\
(\lbrace \xi_2=0\rbrace,\lbrace \xi_1=0\rbrace)
&\text{ if }f(\xi)=|\xi_1|^{p_1} +|\xi_2|^{p_2}
\text{ for }1<p_1<2<p_2\,.
\end{cases}
\end{align*}

With this notation, 
the most general answer to \eqref{Q} known so far is, essentially, that
\begin{align}\label{e:Dfinite}
\text{Lipschitz solutions of \eqref{e:G} are }C^1
\text{ if }\mathcal D =\mathcal D_-\cap\mathcal D_+
\text{ is finite.}
\end{align}
Quite remarkably, this only involves the degeneracy set $\mathcal D$ where ellipticity fails 
\emph{both} from above and below.
This regularity result is proved in \cite{Lacombe2024} 
by building on the arguments introduced in \cite{dSS10},
where  $\mathcal D$ was assumed empty, or $\mathcal D_-$ finite.
One can actually replace `finite' in \eqref{e:Dfinite} by `at most countable', with the same proof.
Moreover, an example given in \cite[Theorem~1.5]{Lacombe2024}
demonstrates that, under the mere assumption \eqref{e:monotG}, Lipschitz solutions of \eqref{e:G} may have point singularities,
so one cannot in general expect better than partial regularity in \eqref{Q}.
For different but related problems, some partial regularity results have been obtained in cases where the degeneracy set $\mathcal D$ may be one-dimensional:
the boundary of a convex polygon for solutions of an obstacle problem \cite{dSS10} or a class of smooth curves in which $Du$ is constrained to lie \cite{LLP25}.

\subsection{Main results}

Our first main result is valid for \emph{any} strictly monotone continuous field $G$.\ We denote by $\mathcal{H}^\alpha$ the $\alpha$-dimensional Hausdorff measure.

\begin{thm}\label{t:Su}
Let $G\colon\R^2\to\R^2$ be continuous and strictly monotone \eqref{e:monotG},
and $u\colon B_1\to\R$ a Lipschitz solution to \eqref{e:G}.
Then the nondifferentiability set 
\begin{align*}
\mathcal S_u
=\lbrace x\in B_1\colon u\text{ is not differentiable at }x
\rbrace
\end{align*}
satisfies $\mathcal H^1(\mathcal S_u)=0$,
and $Du$ is continuous on $B_1\setminus \mathcal S_u$.
\end{thm}

Compared to a generic Lipschitz function, 
the gain of information provided by Theorem~\ref{t:Su} is that:
\begin{itemize}
\item the set of non-differentiability points is $\mathcal H^1$-negligible, instead of merely $\mathcal L^2$-negligible;
\item oscillations of $Du$ are controlled pointwise at all differentiability points, 
instead of merely in average at  Lebesgue points of $Du$.
\end{itemize}
Thanks to the latter, partial $C^1$ regularity follows from Theorem~\ref{t:Su} if one establishes that
 $\mathcal S_u$ is closed, or at least that its closure is $\mathcal L^2$-negligible.
Our second main result achieves this, and more,
under structural assumptions on the degeneracy set $\mathcal D=\mathcal D_-\cap\mathcal D_+$
where ellipticity fails both from below and from above.

\begin{thm}\label{t:Dintro}
Let $G\colon\R^2\to\R^2$ be continuous and strictly monotone \eqref{e:monotG}.
Then, for any Lipschitz solution $u$ to \eqref{e:G}, 
the singular set
 $\mathcal S_u\subset B_1$ is
\begin{itemize}
\item  locally finite, if the graph of $G$ over $\mathcal D$
 is included in a finite disjoint union
of $C^1$ curves in $\R^2\times\R^2$;
\item empty, if $\mathcal D$ is included in a countable disjoint union of boundaries of strictly convex sets.
\end{itemize}
\end{thm}

We will actually prove a more general statement, see Theorem~\ref{tint:4}, 
but Theorem~\ref{t:Dintro}
conveys the main novelty of our methods, which is to deal with a large class of one-dimensional degeneracy sets $\mathcal D$, in contrast with the zero-dimensional assumption in \eqref{e:Dfinite}.\ 
The first part of this result should be compared with the example found in \cite[Theorem~1.5]{Lacombe2024}, of $u$ and $G$ solving \eqref{e:G}-\eqref{e:monotG} and $u$ having a point singularity.
In that example, the graph of $G$ over $\mathcal D$ is a smooth curve.
On one hand, it shows therefore
 the sharpness of the first part of Theorem \ref{t:Dintro}.
On the other hand, Theorem~\ref{t:Dintro} shows the optimality of   
\cite[Theorem~1.5]{Lacombe2024} in the sense that point singularities are the only possible irregular phenomenon.
The second part of Theorem~\ref{t:Dintro} applies to a wide class of radial fields  $G=Df$ with $f$ strictly convex and radial, a symmetry present in most physically relevant problems.\ More precisely, 
consider   $f(x)=g(|x|)$ for some increasing and strictly convex $C^1$ function $g\colon [0,\infty)\to [0,\infty)$ 
such that $g(0)=g'(0)=0$. Then we have $\xi\in\mathcal D$ if and only if $|\xi|\in D_g$, where
\begin{align*}
D_g=
\bigcap
\Big\lbrace X\subset [0,\infty)\text{ closed}
\colon
\exists\lambda>0,\;
&
g''\geq \lambda \text{ a.e. in }[0,\infty)\setminus X
\text{ or }
g''\leq \frac 1\lambda 
\text{ a.e. in }[0,\infty)\setminus X
\Big\rbrace\,.
\end{align*}
Informally, one can think of $D_g$ as the set of points $t \ge 0$ 
near which $g''$ can be \emph{both} arbitrarily large and arbitrarily small.
If $D_g$ is totally disconnected, then all connected components of $\mathcal D$ are either circles or the origin, and we deduce that any Lipschitz minimizer of \eqref{e:intf} is $C^1$.\ This is entirely new: in the context of radial functionals, the available techniques so far could only show such regularity under the assumption that $\mathcal{D}$ is precisely the origin.\ We conclude this introduction by explaining the strategy and the tools used to show the above results, other than giving an overview of the structure of the paper.

\subsection{Strategy of Theorem~\ref{t:Su}}

Given some uniform monotonicity in \eqref{e:monotG}, 
a classical first step is to obtain information on $Du$ by choosing suitable test functions and obtaining \emph{integral estimates}.\ 
This step is present for instance in the aforementioned works \cite{morrey38,dSS10,Astala2008,LLP25},
and, more generally, all regularity results we are aware of 
require quantitative forms of the monotonicity \eqref{e:monotG} at least in suitable open sets.
However, to solve Question \eqref{Q} in full generality, 
one needs to deal with \emph{arbitrarily degenerate} monotonicity on \emph{any} open set.
For instance, the degeneracy set $\mathcal D=\mathcal D_-\cap\mathcal D_+$ can fill the whole plane, see Remark~\ref{r:eps*}.

Thus, we need to rely on different and novel tools: the main point in the proof of Theorem~\ref{t:Su} is an $L^\infty$ to $W^{1,\infty}$ estimate inspired by the beautiful separation argument of \cite{FS08}.\ 
It states that pointwise gradient localization into a given convex region is stable with respect to the $L^\infty$ topology,
see  Theorem~\ref{t:noninc}.
Since linear functions solve \eqref{e:G} and their gradient is localized at a single value,
this implies a flatness improvement property:
 if a solution $u$ of \eqref{e:G} with $|Du|\leq L$ is uniformly close to a linear function $\ell_\xi(x)=(\xi,x)$,
then $Du$ is uniformly close to $\xi$ in a smaller ball.
More precisely, $\forall\xi\in\overline{B_L},$
\begin{align}
\label{e:flat}
\forall\eps >0,\,
\exists\delta >0
\colon
\quad
\|u-\ell_\xi\|_{L^\infty(B_1)} \leq\delta
\quad\Rightarrow
\quad
\|D u-\xi\|_{L^\infty(B_\delta)}\leq\epsilon\,.
\end{align}
Analogous flatness results are at the heart of \cite{CF14a,CF14b,Mooney2020}, but with the fundamental difference that they require $\xi$ to be an elliptic value, that is, $\xi\notin \mathcal D_-\cup\mathcal D_+$.\ Similar arguments can be found also in \cite{dSS10,Lacombe2024}, provided that $\xi \notin \mathcal D = \mathcal{D}_-\cap\mathcal D_+$,
but observe that this might be an empty statement in our general context, since $\mathcal D$ could be the whole plane.\
 Here, instead, we can apply \eqref{e:flat} for \emph{any} $\xi\in\R^2$,
and infer in particular that if $Du$ has vanishing mean oscillation (VMO) at $x\in B_1$, 
then $u$ is differentiable at $x$, and $Du$ has vanishing pointwise oscillation at $x$.
This implies Theorem~\ref{t:Su}, provided we show that non VMO points of $Du$ are $\mathcal H^1$-negligible.
This last fact follows from the strict monotonicity of $G$, and is actually valid in any  dimension (with $\mathcal H^1$ replaced by $\mathcal H^{n-1}$),
 see 
 Theorem~\ref{p:sing}.

Note that, from Theorem~\ref{t:Su}, a classical way to show that $\mathcal S_u$ is closed, hence prove partial regularity, would be to combine it with an $\e$-regularity result:
\begin{align}\label{e:epsreg}
\inf_{\xi\in\overline{B_L}}\| Du-\xi\|_{L^\infty(B_1)}\leq\e \quad\Longrightarrow \quad Du\text{ continuous in }B_{\e}\,,
\end{align}
for an $\e>0$ depending only on $G$ and $L$.\ However, there exist continuous strictly monotone fields for which such an $\e>0$ cannot exist, see Remark~\ref{r:eps*}, 
so the full answer to \eqref{Q} would have to follow a different route.

\subsection{Strategy of Theorem~\ref{t:Dintro}}
The starting point is the recent and remarkable localization result of \cite{lacombe26}, which states that any blowup limit
\begin{align*}
u_\infty(x)=\lim_{r_j\to 0} \frac{u(x_0+r_jx)-u(x_0)}{r_j}\,,
\end{align*}
is either linear or satisfies $Du_\infty\in \mathcal D$ a.e. in $\R^2$.
At a singular point, we are in the latter situation, 
and blowup limits are solutions of the constrained problem
\begin{align}\label{e:constrDu}
\dv G(D u_\infty)=0,\text{ and }D u_\infty\in\mathcal D\,.
\end{align}
We use this constrained problem to show that blowup limits are one-homogeneous with a single singularity at the origin: 
standard arguments then imply that $\mathcal S_u$ is locally finite.
It is worth emphasizing that homogeneity of blowup limits is typically obtained in elliptic problems through monotonicity formulas, which are unavailable in our problem.\ Here, in contrast, we obtain this homogeneity relying on:
\begin{enumerate}
\item hyperbolic regularization effects hidden in \eqref{e:constrDu} when $\mathcal D$ is a curve, see  Theorem~\ref{tint:3};
\item the observation of independent interest that general solutions to \eqref{e:G} have a well-defined Monge-Ampère measure $\det(D^2u)\leq 0$,
which concentrates at the origin for blowup limits, see 
Theorem~\ref{thm:u} and Corollary \ref{cor:2}. 
\end{enumerate} 
We will comment further on the second point in \textsection \ref{sec:2bis}, but let us be more specific about the first point, namely the hyperbolic regularization effect,
which yields a strong generalization of the main result of \cite{LLP25}
about the regularity of differential inclusions into curves.
The starting observation is that, when $\mathcal D$ is a curve,
the degenerate elliptic problem \eqref{e:constrDu} 
  becomes a hyperbolic system,
and $C^1$ solutions have constant gradients along characteristic lines.
A typical feature of hyperbolic problems is that
weak solutions can be extremely wild,
but regularization effects are available for a subclass of weak solutions.
Here we manage to exploit such effects and eventually obtain 
an $\e$-regularity result \eqref{e:epsreg} for solutions of \eqref{e:constrDu}.\ An observation from \cite{LLP25} relating \eqref{e:constrDu} to 
zero-energy states of a phase transition model \cite{GMPS24} then enables us to 
deduce that the singular set $\mathcal S_{u_\infty}$ is locally finite.\ For entire solutions of \eqref{e:constrDu}, such as blowup limits, the constancy along characteristics outside $\mathcal S_{u_\infty}$ further enforces a quite rigid structure, which we can finally combine with the concentration property of the Monge-Ampère measure to conclude that blowup limits are one-homogeneous.

\subsection{The differential inclusions approach and nonlinear Beltrami equations}\label{sec:dia} Ubiquitous in our approach is the correspondence between Lipschitz solutions of \eqref{e:G} and solutions to differential inclusions.
\ In order to make a short but comprehensive overview of the subject, we postpone a thorough explanation of this theory to \textsection \ref{sec:ld}.\ Alongside differential inclusions, we will also exploit the equivalent viewpoint of nonlinear Beltrami systems, which will be introduced in \textsection \ref{sec:nbs}.\ There, we will explain in detail the (well-known) interplay and equivalence among these three viewpoints.\ It is important to notice that, on one hand, the theory of $2\times 2$ elliptic differential inclusions is extremely rich, and this wealth of results and techniques will be essential in our paper.\ Conversely, our results, especially Theorem \ref{t:Su}, represent an important step towards a complete understanding of elliptic differential inclusions in $\R^{2\times 2}$, see Question \eqref{QDI} below.

 \subsection*{Structure of the paper} We begin by introducing differential inclusions in \textsection \ref{sec:ld}.\ In \textsection\ref{sec:prel}, we explain the notation and the well-known, but crucial, results mentioned in \textsection\ref{sec:dia}.\ \textsection\ref{sec:sep}-\textsection\ref{sec:size} contain our first two main Theorems \ref{t:noninc}-\ref{p:sing} that, combined, yield Theorem \ref{t:Su}, 
proved in \textsection\ref{sec:proofthm1}.\ In \textsection\ref{sec:2bis} we introduce the Monge-Ampère measure associated to a solution of the elliptic PDE \eqref{e:G}.\ \textsection \ref{sec:curves} contains a sharp result on inclusions into elliptic curves, Theorem \ref{tint:4}.\ All of these results will be used in the proof of Theorem \ref{t:Dintro}, contained in \textsection\ref{s:degen_eq}.\ Some sections are lengthier than others:\ in that case we will describe the section at the start of it.

\subsection*{Acknowledgments}
XL is supported by the ANR project ANR-22-CE40-0006.\ We thank L. De Rosa and A. Guerra for highly valuable comments on the presentation.

 \section{The differential inclusions point of view}\label{sec:ld}
 Before explaining some elements from the theory of differential inclusions and how they relate to our paper, let us set a notation which will be useful throughout: we express condition \eqref{e:monotG} equivalently as
 \begin{align}\label{e:monoG}
 	(G(b)-G(a),b-a)\ge \sigma(|a-b|)\quad\forall a,b\in\R^2\,,
 \end{align}
 where 
 \begin{equation}\label{e:sigmaprop}
 	\sigma: [0,+\infty) \to [0,+\infty) 	\text{ is a continuous, strictly increasing, convex function.}
 \end{equation} 
 Now, given a solution $u \in \Lip(B_1)$ to \eqref{e:G}, one can construct a solution $w = (u,v) \in \Lip(B_1,\R^2)$ to
 \begin{equation}\label{e:di}
 	Dw \in K \text{ a.e. in }B_1,
 \end{equation}
 where $K\subset\R^{2\times 2}$ fulfills, for a possibly different $\sigma$ still fulfilling \eqref{e:sigmaprop},
 \begin{equation}\label{e:diffinc}
 	\sigma(|X-Y|) \le \det(X-Y), \quad \forall X,Y \in K,
 \end{equation}
 as soon as $G$ fulfills the monotonicity conditions \eqref{e:monoG}-\eqref{e:sigmaprop}.\  The converse also holds: given a solution $w = (u,v)$ to \eqref{e:di} with $K$ enjoying properties \eqref{e:diffinc}-\eqref{e:sigmaprop}, then $u$ solves \eqref{e:G} for $G$ satisfying \eqref{e:monoG}-\eqref{e:sigmaprop},
 see Proposition \ref{p:11} below.\ In the following, for brevity we will often say that $G$ (or $K$) fulfills \eqref{e:monoG} (or \eqref{e:diffinc}) to say that $G$ (or $K$) fulfills \eqref{e:monoG} (or \eqref{e:diffinc}) for $\sigma$ fulfilling \eqref{e:sigmaprop}.\ 
 
 \medskip
 For differential inclusions, \emph{ellipticity} can be defined as in \cite[\textsection 5]{sverak93}, where a compact set $K$ is called \emph{elliptic} if it fulfills \eqref{e:diffinc}, it is a manifold and its tangent space at every point has no rank-one connections.\ Recall that $A,B \in \R^{n\times m}$ are rank one-connected if $\rank(A-B) = 1$.\ For consistency with the usual terminology in the world of PDEs, in this paper we refer to this requirement as \emph{uniform ellipticity}. 
 For example, the $p$-Laplacian is an \emph{elliptic} equation which does not admit elliptic linearizations at all points (hence is not uniformly elliptic) if $p\neq 2$.\ Thus, we give a weaker definition of ellipticity than \cite{sverak93}, that a priori includes and extends the one carried by the PDE \eqref{e:G}, where $G$ fulfills \eqref{e:monoG}-\eqref{e:sigmaprop}.\ While this is just a matter of terminology, it is crucial to underline the difference between our meaning of the word \emph{elliptic} and the one of \cite{sverak93}: the fact that our elliptic sets may not admit elliptic tangent spaces at all points is the crux of the matter in Question \eqref{Q}.\ In simple terms, our notion of ellipticity is equivalent to \emph{approximate rigidity} of $K$:
 \begin{Def}
 	A set $K \subset \R^{n\times m}$ is called \emph{elliptic} if, given any equi-Lipschitz sequence $(u_k)_k$ on $B_1$,
 	\begin{equation}\label{eq:app}
 		\dist(Du_k,K) \to 0 \text{ in }\mathcal{D}'(B_1) \quad\Longrightarrow\quad \text{ $(u_k)_k$ is $W^{1,1}_{\loc}$ strongly precompact.} 
 	\end{equation}
 \end{Def}
 \begin{NREM}
 	Equivalently, a compact set $K$ is elliptic if it only supports trivial homogeneous gradient Young measures.\ For a thorough introduction on Young measures, see for example \cite{Mueller1999}.
 \end{NREM}
 The primary examples of elliptic sets are precisely those with property \eqref{e:diffinc}, compare \cite[Theorem 1]{sverak93}:

 \begin{thm}\label{t:comp}
 	If $K \subset \R^{2\times 2}$ is a compact set fulfilling \eqref{e:diffinc}, then $K$ is elliptic.
 \end{thm}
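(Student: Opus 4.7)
The plan is to reduce the claim to the characterization of ellipticity in terms of homogeneous gradient Young measures, and then to exploit the key fact that, in two dimensions, for any fixed $Y \in \R^{2\times 2}$ the map $X \mapsto \det(X-Y)$ is a null Lagrangian (quasiaffine).

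First, by the standard equivalence recalled in the remark following the definition of elliptic sets, \eqref{eq:app} is equivalent to the statement that every homogeneous gradient Young measure $\nu$ supported in $K$ is trivial, that is, $\nu = \delta_{\bar\nu}$ with $\bar\nu = \int X \, d\nu(X)$. So I would fix such a $\nu$ and aim to prove $\nu$ is a Dirac mass.

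Next, the core computation. Since $X \mapsto \det(X-Y)$ is quasiaffine, commuting with homogeneous gradient Young measures gives, for each fixed $Y \in \supp(\nu)$,
\begin{equation*}
\int \det(X-Y) \, d\nu(X) = \det(\bar\nu - Y).
\end{equation*}
Integrating this identity against $d\nu(Y)$ and using quasiaffinity once more (now in the $Y$ variable), I obtain
\begin{equation*}
\int\!\!\int \det(X-Y) \, d\nu(X) \, d\nu(Y) = \int \det(\bar\nu - Y) \, d\nu(Y) = \det(\bar\nu - \bar\nu) = 0.
\end{equation*}
On the other hand, the hypothesis \eqref{e:diffinc} combined with $\supp(\nu) \subset K$ gives
\begin{equation*}
\det(X - Y) \ge \sigma(|X-Y|) \ge 0 \quad \text{for } (\nu\otimes\nu)\text{-a.e.\ } (X,Y),
\end{equation*}
where $\sigma(0)=0$ follows from taking $X=Y$ in \eqref{e:diffinc}. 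Combining this pointwise inequality with the vanishing of the double integral forces $\sigma(|X-Y|) = 0$ for $\nu\otimes\nu$-a.e.\ $(X,Y)$, and strict monotonicity of $\sigma$ then yields $X = Y$ for $\nu\otimes\nu$-a.e.\ $(X,Y)$, which is exactly the statement that $\nu$ is a Dirac mass.

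The only genuine subtlety is to justify the double application of the null Lagrangian property against the Young measure $\nu$, i.e.\ the use of Fubini for $\nu\otimes\nu$ together with the commutation of quasiaffine functions with homogeneous gradient Young measures; this is however classical (cf.\ the Young measure framework in \cite{Mueller1999}) and follows from the fact that $\det$ is a continuous quadratic polynomial and hence admits polynomial growth bounds compatible with any bounded support such as $K$. No truncation or approximation of $\sigma$ is required, since $\sigma$ only appears in an inequality that we integrate against a probability measure on the compact set $K\times K$.
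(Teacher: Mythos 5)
Your proof is correct and is essentially the argument of \cite[Theorem 1]{sverak93}, which is exactly what the paper cites for this statement (the paper gives no independent proof); the key double-integral identity $\iint \det(X-Y)\,d\nu(X)\,d\nu(Y)=2\bigl[\int\det X\,d\nu-\det(\bar\nu)\bigr]$ that underlies your computation is in fact reproduced verbatim in the paper's Lemma~\ref{l:avg}. The reduction to homogeneous gradient Young measures and the use of quasiaffinity of $\det$ are the standard route, so nothing further is needed.
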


 Showing that a set in $\R^{n\times m}$ is elliptic is, in general, very hard.\ In many situations, for instance in some convex integration schemes \cite{Mueller2003,Szekelyhidi2004}, it is actually sufficient to show that the set is \emph{not} elliptic 
 by finding special subsets of matrices inside $K$: common choices of such special sets are rank-one connections and $T_N$ configurations, see \cite[Definition 1]{Szekelyhidi2007}.\ If $K \subset \R^{2\times 2}$, we have a much clearer picture.\ In particular, from the deep works \cite{Szekelyhidi2007,FS08} we have the following very simple way of deciding whether a given set in $\R^{2\times 2}$ is elliptic.
 
 \begin{thm}\label{thm:charac}
 	Let $K \subset \R^{2\times 2}$ be a compact set. Then $K$ is elliptic if and only if $K$ does not contain rank-one connections and $T_4$ configurations.
 \end{thm}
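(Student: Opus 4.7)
The theorem asserts an equivalence, so the plan is to prove each implication separately; the forward direction is essentially the construction of obstructions, while the reverse direction is the deep structural content.

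For the necessity direction, I would exhibit, for each obstruction, an equi-Lipschitz sequence violating \eqref{eq:app}. Given a rank-one connection $A, B \in K$ with $B - A = a \otimes n$, the standard laminate
\[
u_k(x) := Ax + \tfrac{1}{k}\, a\, \psi(k\, x \cdot n),
\]
where $\psi$ is a Lipschitz sawtooth function with $\psi' \in \{0,1\}$ of average $\lambda \in (0,1)$, satisfies $Du_k \in \{A, B\}$ a.e., so $\dist(Du_k, K) \equiv 0$, while $(Du_k)$ converges weakly-$*$ to $\lambda B + (1-\lambda) A$ without being strongly precompact. For a $T_4$ configuration $\{M_1, \dots, M_4\} \subset K$, I would invoke the classical ``staircase'' lamination construction: although no difference $M_i - M_j$ has rank one, one can adjoin auxiliary matrices providing the missing rank-one connections and build higher-order laminates whose supports concentrate arbitrarily close to $\{M_1, \dots, M_4\}$, yielding equi-Lipschitz sequences with $\dist(Du_k, \{M_1, \dots, M_4\}) \to 0$ that generate a non-trivial homogeneous gradient Young measure.

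For the sufficiency direction, the natural framework is Young measures. Given an equi-Lipschitz sequence $(u_k)$ with $\dist(Du_k, K) \to 0$ in $\mathcal{D}'(B_1)$, a subsequence of $(Du_k)$ generates a family of homogeneous gradient Young measures $(\nu_x)_{x \in B_1}$ with $\supp \nu_x \subset K$ for a.e.\ $x$, and strong $W^{1,1}_{\loc}$ precompactness is equivalent to each $\nu_x$ being a Dirac mass. Hence the theorem reduces to the purely structural assertion: every compactly supported homogeneous gradient Young measure on $\R^{2\times 2}$ that is not a Dirac mass has a rank-one connection or a $T_4$ configuration in its support.

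Establishing this dichotomy is the core of the theorem. The strategy I would follow combines three ingredients: (i) Jensen's inequality applied to polyconvex test functions, together with the fact that $\det$ is a null Lagrangian, so $\int \det\, d\nu_x = \det(\bar\nu_x)$; this rigidly constrains $\supp\nu_x$ via the structure of the polyconvex hull in $\R^{2\times 2}$. (ii) A slicing/separation argument: under the assumption of no rank-one connection in $\supp \nu_x$, one attempts to separate $\supp \nu_x$ by an affine hyperplane respecting the rank-one cone. (iii) An iterative combinatorial procedure that, combining the null Lagrangian identity with the separation step, either terminates with a single point (so $\nu_x$ is a Dirac) or produces four matrices in the staircase geometry of a $T_4$. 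I expect step (iii) to be the main obstacle: the combinatorial dichotomy relies essentially on the planar structure of the rank-one cone in $\R^{2\times 2}$ and on the presence of a single null Lagrangian $\det$, and the argument is known not to extend to higher dimensions.
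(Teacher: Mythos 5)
The paper does not prove Theorem~\ref{thm:charac} at all: it is quoted verbatim from \cite{Szekelyhidi2007,FS08} (Sz\'ekelyhidi's study of $T_4$ configurations and the Faraco--Sz\'ekelyhidi resolution of Tartar's conjecture in $\R^{2\times 2}$). So the relevant question is whether your sketch would actually establish the result, and for the hard direction it would not. Your necessity direction is fine in outline (simple laminates for a rank-one connection; the infinite-order staircase laminate supported on the four points of a $T_4$), and the reduction of sufficiency to the statement that every non-Dirac homogeneous gradient Young measure in $\R^{2\times 2}$ contains a rank-one connection or a $T_4$ in its support is also a correct and standard reformulation (this is essentially the Remark following the definition of ellipticity in the paper).

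The gap is your step (iii), which is precisely the content of \cite{FS08} and is not obtainable from the ingredients you list. Ingredient (i) only exploits polyconvexity, and polyconvexity is strictly weaker than quasiconvexity in $\R^{2\times 2}$: there are compact sets without rank-one connections and without $T_4$'s whose polyconvex hull is nontrivial, so Jensen's inequality against $\det$ and polyaffine separation cannot by themselves force $\nu_x$ to be a Dirac. Ingredient (ii), separation by affine hyperplanes ``respecting the rank-one cone'', is not how the actual separation works either: the separation argument of \cite{FS08} (the same one that inspires Theorem~\ref{tint:1} of this paper) is genuinely nonlinear, passing through conformal coordinates, nonlinear Beltrami equations, quasiregular mappings and quantitative tools such as Astala's area distortion theorem, together with Sz\'ekelyhidi's classification of $T_4$ configurations in conformal coordinates. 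Without these inputs your ``iterative combinatorial procedure'' has no mechanism to produce the dichotomy Dirac-versus-$T_4$, so the proposal describes a hoped-for proof rather than giving one. The honest options are either to cite \cite{FS08} for the sufficiency direction, as the paper does, or to reproduce that (long and deep) argument.
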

 
 This result sheds a much clearer light on the notion of ellipticity for \eqref{e:di}, since it provides an algorithmic way of checking it.\ We can then reformulate Question \eqref{Q} in the language of differential inclusions:
 \begin{equation}\label{QDI}\tag{Q:DI}
 	\text{let $K$ be compact and elliptic. Are solutions to \eqref{e:di} $C^1$, or at least partially regular?}
 \end{equation}
 
 A deep fact is that Question \eqref{QDI} can be completely reduced to its, a priori, simpler counterpart Question \eqref{Q}.\ This is done in two steps:
 
 \begin{thm}[\cite{KS08}]\label{theo:connected}
 	Let $K \subset \R^{2\times 2}$ be a compact elliptic set.\ Then, the gradient of any solution $w \in \Lip(B_1,\R^2)$ to \eqref{e:di} has connected essential range.
 \end{thm}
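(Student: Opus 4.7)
We argue by contradiction. Suppose the essential range $E$ of $Dw$ is disconnected. Then we can write $E \subset K_1 \sqcup K_2$ with $K_1, K_2 \subset K$ nonempty disjoint compact sets, and both $\Omega_i := \{x \in B_1 : Dw(x) \in K_i\}$ have positive Lebesgue measure by definition of the essential range. The plan is to contradict the ellipticity characterization \eqref{eq:app} of $K$ by constructing an equi-Lipschitz sequence $(u_k)_k \subset \Lip(B_1, \R^2)$ with $\dist(Du_k, K) \to 0$ in $\mathcal{D}'(B_1)$ which is not precompact in $W^{1,1}_{\loc}(B_1)$.

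For the construction, fix density-one Lebesgue points $x_i \in \Omega_i$ at which $w$ is differentiable with $A_i := Dw(x_i) \in K_i$. The blow-ups $u^i_r(y) := r^{-1}(w(x_i + ry) - w(x_i))$, defined on $B_2$ for $r$ small, are equi-Lipschitz, satisfy $Du^i_r(y) = Dw(x_i + ry) \in K$ for a.e.\ $y$, converge uniformly as $r \to 0$ to the affine maps $y \mapsto A_i y$, and $Du^i_r \to A_i$ in every $L^p(B_2)$, $p < \infty$, by the density-one property of $x_i$.

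We then form $u_k$ on $B_1$ by partitioning $B_1$ into many small balls $\{B_\alpha\}_\alpha$ of radius $\rho \ll 1$ (with $\rho$ fixed, independent of $k$), labelled type $1$ or type $2$ in a checkerboard-like pattern, and setting
\[
u_k(y) := \rho\, u^{i(\alpha)}_{r_k}\!\bigl(\rho^{-1}(y - y_\alpha)\bigr) + c_\alpha \quad\text{for } y \in B_\alpha,
\]
with $r_k \to 0$ and additive constants $c_\alpha$ chosen to enforce continuity across adjacent balls. Since the blow-ups are $L^\infty$-close to the affine maps $A_i y$, the boundary mismatches are small, and the construction is completed by inserting transition annuli of vanishing total measure in which $Du_k$ remains bounded by a constant depending only on $\Lip(w)$ and $|A_1-A_2|$. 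On $B_1$ minus the transition annuli we have $Du_k \in K$, so $\|\dist(Du_k, K)\|_{L^1(B_1)} \to 0$; a fortiori $\dist(Du_k, K) \to 0$ in $\mathcal{D}'(B_1)$. At the same time, on a fixed positive fraction of $B_1$ the gradient $Du_k$ is close to $A_1$ on type-$1$ balls and to $A_2 \ne A_1$ on type-$2$ balls, preventing strong $L^1_{\loc}$ convergence of any subsequence. This contradicts \eqref{eq:app}.

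The main obstacle is the gluing across ball boundaries: since $A_1, A_2$ are not rank-one connected by Theorem \ref{thm:charac}, affine maps with these gradients cannot be glued continuously along any shared arc. The resolution is to exploit the fact that $u^i_{r_k}$ deviates from its affine model only by an $L^1$-small error arising from the non-constancy of $Dw$ near $x_i$; by taking $r_k$ small enough compared to the thickness of the transition annuli, the boundary mismatch can be absorbed there at the cost of a bounded gradient on a set of vanishing measure, which suffices to maintain $\|\dist(Du_k, K)\|_{L^1}\to 0$ while preserving the oscillation that rules out strong $L^1_{\loc}$ precompactness.
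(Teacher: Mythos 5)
Your approach has a fatal gap at the gluing step, and in fact the overall strategy cannot work. The decisive problem is that your construction never actually uses the disconnectedness of the essential range: all it uses is the existence of two distinct matrices $A_1\neq A_2$ in $[Dw]$. If the construction could be carried out, it would therefore show that \emph{every} non-affine Lipschitz solution of an elliptic inclusion contradicts \eqref{eq:app}, i.e.\ that all solutions are affine --- which is false (elliptic curves $\Gamma$ admit plenty of non-affine solutions, as studied at length in this paper). So something must break, and it is exactly the gluing. On the interface between a type-$1$ and a type-$2$ ball, the two candidate functions differ by an affine map with linear part $(A_1-A_2)y$ plus a constant; since $K$ is elliptic, $\det(A_1-A_2)>0$, so $A_1-A_2$ is invertible and this difference oscillates by an amount of order $|A_1-A_2|\,\rho$ along an interface of length $\sim\rho$ (the additive constants $c_\alpha$ can only cancel it at one point). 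Interpolating across a transition annulus of thickness $\delta$ then forces gradients of size $\gtrsim |A_1-A_2|\,\rho/\delta$; keeping the sequence equi-Lipschitz forces $\delta\gtrsim\rho$, so the transition region occupies a \emph{fixed} fraction of each ball and $\|\dist(Du_k,K)\|_{L^1}$ does not tend to zero. The smallness of $u^i_{r_k}-A_i y$ in $L^\infty$ is irrelevant here: the mismatch comes from $A_1\neq A_2$ themselves, not from the deviation of the blow-ups from their affine models.

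For comparison, the paper does not prove this statement from scratch. It quotes the theorem of \cite{KS08} that for \emph{any} $w\in\Lip(B_1,\R^2)$ the rank-one convex hull $[Dw]^{\mathrm{rc}}$ is connected (a genuinely deep result, proved there via degree theory and quasiregular mappings), and then observes that ellipticity of $K$ forces $[Dw]^{\mathrm{qc}}=[Dw]$, hence $[Dw]=[Dw]^{\mathrm{rc}}$ is connected. Any self-contained proof has to exploit a quantitative rigidity mechanism of this kind; a checkerboard/convex-integration-style construction goes in exactly the opposite direction and is precisely what ellipticity forbids.
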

 
 Here, if $\Omega \subset \R^m$ is an open set, for a map $f\in L^1(\Omega,\R^N)$ we denote by $[f](\Omega')$ its \emph{essential range in $\Omega'\subset\Omega$}, namely the smallest closed set with the property that 
 \begin{equation}\label{e:egd}
 	\text{$f(x) \in [f](\Omega')$ for a.e. $x \in \Omega'$.} 
 \end{equation}
 We will write $[f]$ for $[f](\Omega)$ (and call it simply the \emph{essential range} of $f$) and we also set:
 \begin{equation}\label{e:point}
 	[f](x_0) \doteq [f]\left(\bigcap_{r>0}B_r(x_0)\right).
 \end{equation}
We will add a few words on the essential range of the gradient of a map in \textsection \ref{sec:ergd}.\
 
 \begin{rem}
 	The statement of \cite[Theorem 1]{KS08} is more general than Theorem~\ref{theo:connected}.
 	It asserts  that, for any $w \in \Lip(B_1,\R^2)$, the \emph{rank-one convex hull}  $[Dw]^{\text{rc}}$
 	is connected.
 	This implies Theorem~\ref{theo:connected}
 	because  $[Dw]^{\text{rc}}=[Dw]$ if $[Dw]\subset K$ for an elliptic set $K$.\ Indeed, the ellipticity assumption ensures that the quasiconvex hull of $[Dw]$, $[Dw]^{\text{qc}}$, which fulfills $[Dw] \subset [Dw]^{\text{rc}} \subset [Dw]^{\text{qc}}$, is equal to $[Dw]$, 
 	see \cite[\textsection{4.4}]{Mueller1999}.
 \end{rem}

 To show how to reduce Question \eqref{QDI} to Question \eqref{Q} we need a second, and last, ingredient:
 
 \begin{lem}[\cite{sverak93}]
 	Let $K \subset \R^{2\times 2}$ be a compact, connected set without rank-one connections. Then there exists $ c \in \R$ such that $c\det(X-Y) > 0$, $\forall X,Y \in K, X\neq Y$.
 \end{lem}
 
 In other words, 
 up to a 
change of orientation,
 a compact, connected, elliptic set fulfills \eqref{e:diffinc}.\ 
 This, combined with Theorem \ref{theo:connected}, yields that, if $w$ is a Lipschitz solution to \eqref{e:di} for an elliptic set $K$, then 
 (up to passing to a connected component)
  we can assume that $K$ fulfills \eqref{e:diffinc}.\
Thus there is no real difference between Questions \eqref{Q} and \eqref{QDI}.\ This represents a weak converse to Theorem~\ref{t:comp}.\
 
 
\section{Notation and preliminaries}\label{sec:prel}

Let us first recall the notation and some results we will use in the course of our paper.

\subsection{General notation and some elementary facts}

\subsubsection*{Topology}

Let $E \subset \R^n$ be any set. Then, $\overline{E}$ denotes its closure, $\partial E$ its topological boundary, $E^c$ its complement in $\R^n$ and $\diam(E)$ its diameter. For two sets $A,B$, we denote by $\dist(A,B)$ the distance between them.\ Moreover, $A \Subset B$ means that $\overline{A} \subset B$.\ The open ball of radius $r$ centered at $X$ in $\R^n$ or in $\R^{n\times m}$ is denoted by $B_r(X)$.\ If $X = 0$, we will simply write $B_r$.\ Finally, given a Lipschitz function $f: E \subset \R^m \to \R^n$, we write $\Lip(f)$ for its Lipschitz constant.

\subsubsection*{Measure theory}
$|E|$ denotes the Lebesgue measure of a measurable set $E \subset \R^n$.
We write $\mathbf 1_E$ for the indicator function of $E$, namely $\mathbf 1_E(x) = 1$ if $x \in E$ and $\mathbf 1_E(x) = 0$ otherwise.\ In the case of super-level sets of a function $u: F \subset \R^n \to \R$ (and analogously for sub-level sets and for preimages of intervals) we will use expressions such as $\mathbf{1}_{\{u > \alpha\}}$ to denote the function that, at $x \in F$, returns $1$ if $u(x)>\alpha$ and $0$ otherwise.\
If $|E| < +\infty$, $\fint_E f(x) dx$ denotes the average of $f$ over $E$.\ In most cases we will consider only $L^p$ spaces with respect to the Lebesgue measure.\
Otherwise, we will write $L^p(\Omega,X;\mu)$ for the space of $L^p$ functions with respect to the measure $\mu$ on $\Omega$ and with values in the set $X$.


\subsubsection*{Linear Algebra}
We use the notation $e_i$ for the vectors of the canonical basis of $\R^2$, $e_1 = (1,0)$ and $e_2 = (0,1)$. For a matrix $A \in \R^{n\times m}$, we denote by $L_A$ the linear map associated to $A$, $L_A(x) \doteq Ax$.\ The image of $L_A$ is the \emph{range} of $A$, $\ran A$. $\det(A)$, $A^T$ and $|A|$ denote the determinant, the transpose and the Euclidean norm of the matrix $A$, respectively.\ The (standard) scalar product between matrices is denoted by $\langle A,B\rangle$, while for vectors $a,b$ we use $(a,b)$ or, if confusion may arise, $a\cdot b$.\ The cofactor matrix is
\begin{equation}\label{eq:A}
	\cof(A) \doteq \left(\begin{array}{cc} d & -c \\ -b & a\end{array}\right), \quad \text{ if } A = \left(\begin{array}{cc} a & b \\ c & d\end{array}\right),\quad
	\text{so that }\cof^T(A)A = A\cof^T(A)= \det(A)\id\,.
\end{equation}

\subsubsection*{Complex derivatives}\label{sec:code}

It will be convenient to use \emph{complex} notation to represent matrices $A \in \R^{2\times 2}$. 
Let $A$ be as in \eqref{eq:A}.\ Its conformal and anti-conformal parts are defined, respectively, as follows:
\[
[A]_{\mathcal{H}} \doteq  \frac{1}{2}[(a + d) + i(c-b)] \quad \text{ and } \quad [A]_{\overline{\mathcal{H}}} \doteq  \frac{1}{2}[(a - d) + i(c+ b)]
\]
In particular, we have the fundamental identity:
\begin{equation}\label{e:fund}
	Az = [A]_{\mathcal H}z + [A]_{\overline{\mathcal H}}\overline{z}, \quad \forall z \in \R^2,
\end{equation}
where the product on the left-hand side is the classical one between a matrix and a vector and the one on the right-hand side is the complex multiplication. We have the following relations:
\begin{equation}\label{alg}
	\det(A) = |[A]_{\mathcal{H}}|^2 - |[A]_{\overline{\mathcal{H}}}|^2, \quad |A|^2 = 2|[A]_{\mathcal{H}}|^2 + 2|[A]_{\overline{\mathcal{H}}}|^2.
\end{equation}
As a short-hand notation, we will write expressions such as:
\begin{equation}\label{e:shn}
	A = (a,b),
\end{equation}
which mean that $a$ is the conformal part of $A$ and $b$ is the anti-conformal part of $A$.\ Finally, if $\Omega \subset \mathbb{C}$ is an open set and $f:\Omega \to \C$, whenever its differential makes sense we can set
\begin{equation}\label{wirt}
	\begin{split}
		\partial_zf &= f_z \doteq [Df]_{\mathcal{H}} = \frac{1}{2}[(\partial_1f_1 + \partial_2f_2) + i(\partial_1f_2 - \partial_2f_1)], \\
		\partial_{\bar z }f &= f_{\overline z} \doteq [Df]_{\overline{\mathcal{H}}} = \frac{1}{2}[(\partial_1f_1 - \partial_2f_2) + i(\partial_1f_2 + \partial_2f_1)].
	\end{split}
\end{equation}

\subsubsection{Essential range and generalized differentials}\label{sec:ergd}

As defined in \eqref{e:egd}, given any $w \in \Lip(\Omega,\R^n)$, $\Omega\subset \R^m$ open, we let $[Dw]$ be its essential range over $\Omega$, defined as the smallest closed set $K'$ such that $Dw(x) \in K'$ for a.e. $x \in \Omega$.\ It is immediate to see that if $E \subset \Omega$ is the set of Lebesgue points for $Dw$, then,
\begin{align*}
	[Dw] = \overline{\{Dw(x) \in \R^{n\times m}: x \in E\}}.
\end{align*}
This, together with \eqref{e:point}, is closely related to the set of \emph{reachable gradients of $w$}, see for instance \cite[Definition 3.1.10]{CS04}, which is typically defined by considering the set $E$ of differentiability points of $w$. 
In our setting, namely for solutions $w$ to \eqref{e:di}, there is no difference between the two sets: if $K$ fulfills \eqref{e:diffinc} and $w$ solves \eqref{e:di}, then the set of differentiability points for $w$ and the set of Lebesgue points for $Dw$ coincide.

\subsection{Three equivalent viewpoints}\label{sec:nbs}

We have the following equivalence:

\begin{prop}\label{p:11}
	Let $u \in \Lip(B_1)$ be a solution to \eqref{e:G}, where $G$ is defined on a compact set $F \supseteq [Du]$.\ Then, there exists $v \in \Lip(B_1)$ such that $w \doteq (u,v)$ solves
	\[
	D w \in K = \left\{\left(\begin{array}{cc}x & y \\ -G_2(x,y) & G_1(x,y)\end{array}\right): (x,y) \in F \right\}, \quad \text{ a.e. in }B_1.
	\]
	Moreover, $G$ satisfies \eqref{e:monoG} for all $(x,y) \in F \supseteq [Du]$ for a function $\sigma$ 
	fulfilling \eqref{e:sigmaprop} if and only if $K$ fulfills \eqref{e:diffinc} for some $\tilde{\sigma}$ fulfilling \eqref{e:sigmaprop}.\ Conversely, if $w = (u,v) \in \Lip(B_1,\R^2)$ satisfies \eqref{e:di} and $K$ is a compact set fulfilling \eqref{e:diffinc}, then there exists a field $G \in C^0(\pi_1(K), \R^2)$ fulfilling \eqref{e:monoG} such that $u$ solves \eqref{e:G}.\ Here, $\pi_1$ is the projection onto the first row.
\end{prop}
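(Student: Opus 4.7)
My plan rests on two ingredients: the two-dimensional fact that distributionally divergence-free vector fields on a simply connected domain are rotated gradients, and a direct $2\times 2$ determinant expansion that equates $\det(X-Y)$ with $(G(b)-G(a),b-a)$. With these in hand, both directions of the proposition reduce to unpacking definitions.

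For the forward direction I would consider the rotated flux $F:=(-G_2(Du),G_1(Du))$, which lies in $L^\infty(B_1,\R^2)$ since $Du$ takes values in the compact essential range $[Du]\subset F$ and $G$ is continuous there. The PDE \eqref{e:G} is equivalent to $\curl F=0$ in $\mathcal D'(B_1)$, so Poincar\'e's lemma on the simply connected ball $B_1$ yields $v\in\Lip(B_1)$, unique up to a constant, with $Dv=F$ a.e. Then $w:=(u,v)$ has gradient with first row $Du$ and second row $(-G_2(Du),G_1(Du))$, which by construction lies in $K$.

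For the equivalence between \eqref{e:monoG} and \eqref{e:diffinc} I would simply expand: for $b=(x_1,y_1),a=(x_2,y_2)\in F$ and $X,Y\in K$ the corresponding matrices, the $2\times 2$ identity
\begin{equation*}
\det(X-Y)=(G(b)-G(a),b-a),\qquad |X-Y|^2=|b-a|^2+|G(b)-G(a)|^2
\end{equation*}
transfers one inequality into the other via monotonicity of $\sigma$ and $|X-Y|\ge|b-a|$, possibly up to a structural adjustment of $\sigma$ in the delicate direction.

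For the converse I would define $G\colon\pi_1(K)\to\R^2$ by $G(x,y):=(b,-a)$ for the unique matrix $\begin{pmatrix}x&y\\a&b\end{pmatrix}\in K$ with first row $(x,y)$. Uniqueness is forced by \eqref{e:diffinc}: two such matrices differ by one with zero first row, hence zero determinant, so strict monotonicity of $\sigma$ at the origin collapses them. Continuity of $G$ then follows by a compactness argument using the closedness of $K$. With $G$ so defined, $Dw\in K$ forces $Dv=(-G_2(Du),G_1(Du))$ a.e., and Schwarz's theorem gives $\dv(G(Du))=\partial_1\partial_2 v-\partial_2\partial_1 v=0$ in $\mathcal D'(B_1)$. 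The steps that will require most care are the Poincar\'e lemma in the Lipschitz / distributional setting (standard but worth a careful test-function verification) and the quantitative tracking of $\sigma$, where the inequality $|X-Y|\ge|b-a|$ points in the unhelpful direction and continuity of $G$ on the bounded set $F$ may need to be invoked to close the argument.
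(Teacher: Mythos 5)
Your proposal is correct and follows the paper's own proof essentially verbatim: a primitive $v$ with $Dv=-JG(Du)$ obtained from the divergence-free structure on the (simply connected) ball, the identities $\det(X-Y)=(G(b)-G(a),b-a)$ and $|X-Y|^2=|b-a|^2+|G(b)-G(a)|^2$ for the equivalence of \eqref{e:monoG} and \eqref{e:diffinc}, and inversion of the first-row projection (whose injectivity is exactly what \eqref{e:diffinc} forces) to recover $G$ in the converse direction. Your flagged worry about the direction \eqref{e:monoG}$\Rightarrow$\eqref{e:diffinc} is well-founded: since $|X-Y|\ge |b-a|$ the inequality does point the unhelpful way, literally ``the same $\sigma$'' can fail (e.g. $G(x)=Mx$ with $M$ large and $\sigma(t)=t^2$ gives $\det(X-Y)=M|b-a|^2<(1+M^2)|b-a|^2=|X-Y|^2$), and the fix is the one you sketch --- replace $\sigma$ by $\sigma\circ\psi^{-1}$ with $\psi(t)=t+\omega(t)$, $\omega$ a modulus of continuity of $G$ on the compact set $[Du]$, then pass to a convex strictly increasing minorant to restore \eqref{e:sigmaprop} --- whereas the paper's proof dismisses this step as straightforward and its ``with the same $\sigma$'' is accurate only in the direction \eqref{e:diffinc}$\Rightarrow$\eqref{e:monoG}.
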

\begin{proof}
	To show the first statement we simply use \eqref{e:G} and the convexity of $B_1$ to find $v \in \Lip(B_1)$ such that $Dv = -JG(Du)$ a.e. in $B_1$. The rest of the properties are straightforward. Here and in what follows,
	\begin{equation}\label{e:J}
		J \doteq \left(\begin{array}{cc}0 & 1 \\ -1 & 0\end{array}\right).
	\end{equation}
	For the converse statement, we notice that inequality \eqref{e:diffinc} reads as
	\begin{equation}\label{e:XY}
		(JX_2 - JY_2,X_1 - Y_1) \ge \sigma(|X-Y|), \quad \forall X,Y \in K,
	\end{equation}
	if $X_i,Y_i$ are the $i$-th row of $X$ and $Y$ respectively.\ 
	Through \eqref{e:sigmaprop} we infer that the map $Z \in K \mapsto Z_1 \in \R^2$ is injective.\ Therefore, it must be invertible when the target is restricted to $\pi_1(K)$.\ Let $I(Z_1)$ be such inverse, and define also $\tilde G(Z_1) \doteq \pi_2(I(Z_1))$.\ Finally, set $G(x,y) \doteq J\tilde G(x,y)$, for all $(x,y) \in \pi_1(K)$.\ Writing \eqref{e:XY} for $X_1 = a, Y_1 = b \in \pi_1(K)$, we deduce that
		\[
		(G(b)-G(a),b-a) \ge \sigma(\sqrt{|b-a|^2 + |G(b) - G(a)|^2}).
		\]
		Observing that $G$ is bounded since $K$ is, \eqref{e:sigmaprop} implies that $G$ is continuous on $\pi_1(K)$ and that $G$ satisfies \eqref{e:monoG}.\ Moreover, by definition, $G(Du) = J\tilde G(Du) = JDv$ a.e., so $u$ solves \eqref{e:G}, and the proof is concluded.
\end{proof}

In the same spirit, as observed in \cite[Theorem~3.2]{zhang97}, we have:

\begin{prop}\label{p:12}
	Let $w \in \Lip(B_1,\R^2)$ be a solution to
	\begin{equation}\label{eq:g}
		\partial_{\bar z}w = h(\partial_z w) \text{ a.e. in }B_1,
	\end{equation}
	for $h\colon F \supseteq [\partial_zw] \to \mathbb{C}$, 
	 and $F$ a compact set.\ Using notation \eqref{e:shn}, if $K \doteq \left\{(a,h(a)): a\in F\right\}$, then $w$ solves \eqref{e:di} for such $K$.\ Moreover, if $h$ is strictly contractive, in the sense that there exists 
	\begin{equation}\label{e:sigmatilde}
		\text{ an increasing function $\tilde\sigma\colon [0,\infty)\to [0,1]$}
	\end{equation}
	such that
	\begin{equation}\label{e:contrac}
		|h(a)-h(b)|\leq (1-\tilde\sigma(|a-b|))|a-b|,\qquad\forall a,b\in F\,,
	\end{equation}
	then $K$ fulfills \eqref{e:diffinc} for some function $\sigma$ 
	with properties \eqref{e:sigmaprop}.\ Conversely, let $w \in \Lip(B_1,\R^2)$ solve \eqref{e:di}, and assume $K$ is a compact set fulfilling \eqref{e:diffinc}.\ Then, there exists a strictly contractive  $h\colon [K]_{\mathcal{H}} \to \mathbb{C}$ such that $w$ solves \eqref{eq:g}.\ We denoted by $[K]_{\mathcal{H}} \doteq \{[X]_{\mathcal{H}}:X \in K\}$.
\end{prop}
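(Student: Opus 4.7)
The strategy is to pass between the two formalisms using the conformal/anti-conformal decomposition $A \leftrightarrow ([A]_{\mathcal H}, [A]_{\overline{\mathcal H}})$ introduced in the preliminaries, in which $Dw$ corresponds to the pair $(\partial_z w, \partial_{\bar z} w)$. In this decomposition, \eqref{eq:g} says precisely that $Dw$ takes values in the graph set $K=\{(a,h(a)):a\in F\}$, so the differential inclusion $Dw\in K$ is a tautological reformulation of \eqref{eq:g} once one has identified, in the converse direction, that $K$ is itself a graph over its conformal projection $[K]_{\mathcal H}$. What actually requires work is the quantitative equivalence between strict contractivity of $h$ and property \eqref{e:diffinc}, carried out via the identities \eqref{alg}.

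For the direct implication, for any $X=(a_1,h(a_1))$ and $Y=(a_2,h(a_2))$ in $K$, \eqref{alg} reads
\[
\det(X-Y)=|a_1-a_2|^2-|h(a_1)-h(a_2)|^2,\qquad |X-Y|^2=2|a_1-a_2|^2+2|h(a_1)-h(a_2)|^2.
\]
The contractivity bound \eqref{e:contrac} plugged into the first identity yields $\det(X-Y)\ge(2\tilde\sigma-\tilde\sigma^2)(|a_1-a_2|)\,|a_1-a_2|^2$, while $|h(a_1)-h(a_2)|\le|a_1-a_2|$ combined with the second identity gives $|a_1-a_2|\ge|X-Y|/2$. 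Since $s\mapsto 2s-s^2$ is nondecreasing on $[0,1]$ and $\tilde\sigma$ is nondecreasing on $[0,\infty)$, this produces a pointwise lower bound $\det(X-Y)\ge\varphi(|X-Y|)$ with $\varphi$ nondecreasing, nonnegative and strictly positive on $(0,\infty)$. Extracting from $\varphi$ a convex strictly increasing $\sigma$ as in \eqref{e:sigmaprop} is then a soft step.

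For the converse, assume $K$ compact satisfies \eqref{e:diffinc}. For $X\ne Y$ in $K$, the first identity in \eqref{alg} combined with \eqref{e:diffinc} forces $|[X]_{\mathcal H}-[Y]_{\mathcal H}|>|[X]_{\overline{\mathcal H}}-[Y]_{\overline{\mathcal H}}|$, so the projection $X\mapsto[X]_{\mathcal H}$ is injective on $K$, and its composition with $X\mapsto[X]_{\overline{\mathcal H}}$ defines a $1$-Lipschitz, hence continuous, map $h\colon[K]_{\mathcal H}\to\C$ whose graph is precisely $K$. Thus $Dw\in K$ a.e.\ is equivalent to $\partial_{\bar z}w=h(\partial_z w)$ a.e. For strict contractivity, set $a=[X]_{\mathcal H}$, $b=[Y]_{\mathcal H}$; the second identity in \eqref{alg} gives $|X-Y|\ge\sqrt 2|a-b|$, and factoring $|a-b|^2-|h(a)-h(b)|^2\ge\sigma(\sqrt 2|a-b|)$ together with $|h(a)-h(b)|\le|a-b|$ yields
\[
|h(a)-h(b)|\le\Big(1-\frac{\sigma(\sqrt 2|a-b|)}{2|a-b|^2}\Big)|a-b|.
\]
An increasing $\tilde\sigma\colon[0,\infty)\to[0,1]$ satisfying \eqref{e:sigmatilde} is then obtained by taking a monotone infimum and capping at $1$, using compactness of $K$ to guarantee strict positivity on $(0,\infty)$.

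The main obstacle is the careful construction of the moduli $\sigma$ and $\tilde\sigma$ so that they satisfy the exact structural conditions \eqref{e:sigmaprop} and \eqref{e:sigmatilde}. Once the pointwise comparisons above are established, this is a standard, purely real-analytic manipulation (convex minorants, monotone infima, truncation) that uses the boundedness of the relevant ranges, supplied by compactness of $K$ in the converse direction and by Lipschitz continuity of $w$ in the direct one.
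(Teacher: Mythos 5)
Your proposal is correct and follows essentially the same route as the paper: both directions are handled by translating between $\det(X-Y)$, $|X-Y|^2$ and the conformal/anti-conformal differences via \eqref{alg}, and the converse is obtained by showing the conformal projection is injective on $K$ and defining $h$ through its inverse. Your write-up simply makes explicit the "direct computations" and the construction of the moduli $\sigma$, $\tilde\sigma$ that the paper leaves implicit.
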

\begin{proof}
	The fact that $w$ solves \eqref{e:di} for $K$ given by the graph of $h$ is immediate.\ Now \eqref{e:diffinc} follows from \eqref{e:sigmatilde}-\eqref{e:contrac} by direct computations that exploit \eqref{alg}.\ The converse statement is less direct, but it follows from the same arguments as the previous proposition: we rewrite the ellipticity \eqref{e:diffinc} using \eqref{alg} as
	\begin{align*}
		\sigma(|X-Y|)+
		\big|[X]_{\overline{\mathcal H}}-[Y]_{\overline{\mathcal H}}\big|^2 
		\leq  
		\big|[X]_{\mathcal H}-[Y]_{\mathcal H}\big|^2
		\qquad\forall X, Y\in K\,.
	\end{align*}
	Then, this inequality implies that the projection  map $g\colon K\to \C$, $X\mapsto [X]_{\mathcal H}$ is injective and that the map $h\colon [K]_{\mathcal{H}} \to\mathbb C$, $z\mapsto [g^{-1}(z)]_{\overline{\mathcal H}}$, is strictly contractive.
\end{proof}

The last viewpoint we introduced in Proposition \ref{p:12}, the one of nonlinear Beltrami systems, is another rather useful one, see \cite{FS08,Astala2008} and references therein.\ We exploit it now to show how all of our problems \eqref{e:G}-\eqref{e:di} and \eqref{eq:g} can be \emph{extended} in a suitable way so that, for instance, instead of working with $G$ defined on $[Du]$ in \eqref{e:G} we can work with $G \in C^0(\R^2,\R^2)$ fulfilling some special properties \emph{at infinity}.\ Thanks to Propositions \ref{p:11}-\ref{p:12}, it is enough to present this extension for nonlinear Beltrami systems.

\begin{lem}\label{l:extension}
	Let $F\subset\C$ be compact and $h\colon F\to\C$ be a strictly contractive mapping, that is, $h$ satisfies \eqref{e:sigmatilde}-\eqref{e:contrac}.\ Then $h$ admits a strictly contractive extension $H\colon\C\to\C$, for a possibly different $\tilde \sigma$ than $h$ still fulfilling \eqref{e:sigmatilde}.\ Moreover, $H$ can be chosen to be constant outside a large ball.
\end{lem}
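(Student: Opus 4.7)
After translating both the source and the target, I assume without loss of generality that $0\in F$ and $h(0)=0$, so that the $1$-Lipschitz property of $h$ (which follows from \eqref{e:contrac}) yields the a priori bound $|h(z)|\le|z|$ on $F$. Letting $\rho=\sup_F|\cdot|$ and fixing $R\ge 3\rho$, the idea is to extend the data by the constant $0$ on the exterior of $B_R$ and then apply a Lipschitz extension theorem, checking afterwards that the resulting map is indeed strictly contractive.

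Concretely, I would define $F^*=F\cup\{z\in\C:|z|\ge R\}$ and $h^*\colon F^*\to\C$ by $h^*=h$ on $F$ and $h^*\equiv 0$ on $\{|z|\ge R\}$, and verify by a short triangle-inequality computation that $h^*$ is $1$-Lipschitz on $F^*$: the only nontrivial pairs are $(a,b)$ with $a\in F$ and $|b|\ge R$, for which $|h^*(a)-h^*(b)|=|h(a)|\le|a|\le|b|-|a|\le|a-b|$. One can also read off an explicit modulus of strict contractivity $\tilde\sigma^*$ of $h^*$ on $F^*\times F^*$: on $F\times F$ it coincides with $\tilde\sigma$; on $\{|z|\ge R\}^2$ it is trivially $1$; on mixed pairs the ratio $|h(a)|/|a-b|$ is uniformly bounded by $\rho/(R-\rho)\le 1/2$. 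Kirszbraun's theorem then yields a $1$-Lipschitz extension $H\colon\C\to\C$ with $H|_F=h$ and $H\equiv 0$ outside $B_R$, which already secures both the extension property and the "constant outside a large ball" conclusion.

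The main obstacle is to upgrade this Kirszbraun extension to a \emph{strictly} contractive extension with some (possibly different) modulus $\tilde\sigma_{\mathrm{new}}$: the pairs of concern are those where at least one of the two points lies in the gap $B_R\setminus F$, because Kirszbraun preserves the $1$-Lipschitz constant but can a priori saturate it on such pairs. To deal with this I would either (i) post-process $H$ by a smooth perturbation of the form $H'(z)=(1-\eta\chi(z))H(z)$, where $\chi$ is a cutoff vanishing in a neighborhood of $F\cup\partial B_R$, with $\eta>0$ tuned small enough (using the uniform bound on $|H|$ on $B_R$ and on $|\nabla\chi|$) that $|\nabla H'|\le 1-\delta$ on the support of $\chi$ while preserving $H'=H$ on $F^*$; or (ii) switch to the differential-inclusion viewpoint via Proposition~\ref{p:12} and construct the extension directly as a graph $\{(a,H(a)):a\in\C\}\subset\R^{2\times 2}$ satisfying \eqref{e:diffinc}, where the required inequality reduces to a pointwise check in a finite number of cases (pairs inside $F$, pairs outside $B_R$, mixed pairs, and pairs involving the gap). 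A compactness argument on $\overline{B_R}$ then combines the quantitative strict contractivity on $F^*\times F^*$ with the strict contraction on the gap to produce a single modulus $\tilde\sigma_{\mathrm{new}}$ satisfying \eqref{e:sigmatilde}, completing the proof.
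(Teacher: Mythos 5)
Your reduction to the set $F^*=F\cup\{|z|\ge R\}$ and the verification that $h^*$ is strictly contractive there (with modulus $\min(\tilde\sigma,1/2)$) are correct, and this is a reasonable way to arrange for constancy outside a ball; the paper achieves the same thing differently, by post-composing with an explicit compactly supported $1$-Lipschitz map $\Phi$. But the heart of the lemma is exactly the step you flag as "the main obstacle" and then do not close: passing from a strictly contractive map on a compact set to a strictly contractive map on all of $\C$. Kirszbraun only preserves the Lipschitz constant $1$, and a $1$-Lipschitz extension can genuinely saturate it (take $F=\{0\}$, $h(0)=0$: $H(z)=z$ is a Kirszbraun extension that is nowhere contractive). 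Neither of your repairs works. Fix (i) fails for two reasons: first, $\nabla H'=(1-\eta\chi)\nabla H-\eta\,\nabla\chi\otimes H$, and in the transition region of the cutoff the gain $-\eta\chi|\nabla H|$ and the loss $+\eta|\nabla\chi|\,|H|$ scale identically in $\eta$, so shrinking $\eta$ cannot restore $|\nabla H'|\le 1$ there unless $|H|$ happens to be small; second, and more fundamentally, any modification that vanishes in a neighborhood of $F$ cannot help pairs $(a,b)$ with $a\in F$ and $b\notin F$ arbitrarily close to $F$, which are precisely the pairs on which the Kirszbraun extension may be isometric. Fix (ii) is circular: "construct the extension directly as a graph satisfying \eqref{e:diffinc}" restates the problem, and the "pairs involving the gap" case is left with no construction; the concluding compactness argument presupposes strict contraction on the gap, which is what has to be proved.

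The paper does not attempt an elementary construction here: it invokes a dedicated extension theorem for strictly contractive mappings (\cite[Theorem~3.1]{DBP04}) to produce a strictly contractive $H_1$ on all of $\C$ (with a possibly worse modulus), and only then handles the "constant outside a large ball" clause by the elementary composition $H=H_1\circ\Phi$. To make your argument complete you would either need to cite such a modulus-preserving extension theorem in place of Kirszbraun, or supply an actual construction of $H$ on $B_R\setminus F$ together with a quantitative contraction estimate for all pairs, including those with one point in $F$ and one in the gap.
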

\begin{proof}
	The existence of a contractive extension $H_1$ of $h$ follows from \cite[Theorem~3.1]{DBP04}. Then it suffices to set $H=H_1\circ\Phi$, where $\Phi\colon\C\to\C$ is 1-Lipschitz, is the identity on $B_R$ for some $R>0$ such that $F\subset B_R$, and has compact support.
	To obtain such a map $\Phi$, one can set for instance
	$\Phi(z)=\chi(|z|)z$ with 
	\begin{align*}
		\chi(r)=\begin{cases}
			1 &\text{for }0\leq r\leq R\,,\\
			1-\ln\frac{r}{R} &\text{for }R<r<eR\,,\\
			0 &\text{for }R\geq eR\,.
		\end{cases}
	\end{align*}
	The differential of $\Phi$ at $z=re^{i\theta}$ is symmetric with eigenvalues $\chi(r)\in [0,1]$ and $\chi(r)+r\chi'(r)\in [-1,1]$.
	It has therefore operator norm at most $1$, hence $\Phi$ is $1$-Lipschitz.
\end{proof}

\section{Elliptic inclusions and separation}\label{sec:sep}

This section is devoted to showing the following result, for which it is useful to recall definition \eqref{e:point}.

\begin{thm}\label{t:noninc}
	Let $K \subset \R^{2\times 2}$ be compact and fulfill \eqref{e:diffinc}, $(w_j)_j\subset \Lip(B_1,\R^2)$ be equi-Lipschitz solutions to \eqref{e:di}, $x_j \in B_1$, and assume $w_j$ converges locally uniformly in $B_1$ to $w_\infty$, $x_j \to x_0 \in B_1$.\ Then, 
	\begin{equation}\label{eq:usc}
	\limsup_j\diam([\partial_zw_j](x_j)) \le \diam([\partial_zw_\infty](x_0)).
	\end{equation}
As a consequence,
	\begin{equation}\label{eq:usc1}
	\limsup_j\diam([Dw_j](x_j)) \le 2\diam([Dw_\infty](x_0)).
	\end{equation}
\end{thm}

\begin{rem}\label{rmk:simp}
Notice that we only need to show \eqref{eq:usc}, as \eqref{eq:usc1} follows from the fact that $w_j$ and $w$ solve \eqref{e:di} and $K$ is the graph of a $1$-Lipschitz maps over the plane of conformal matrices, compare Proposition \ref{p:12}.
\end{rem}

We reduce the proof of Theorem \ref{t:noninc} to the following:

\begin{thm}\label{t:locC}
Let $K\subset\R^{2\times 2}$ be compact and fulfill \eqref{e:diffinc}.
Let $w_0\in \Lip(B_1,\R^2)$ solve \eqref{e:di}, 
and 
$C\subset\R^2$ an open, bounded, convex set such that $[\partial_z w_0](B_1)\subset C$.
Then there exists $\delta=\delta(w_0,K,C)>0$  such that
\begin{align*}
\|w-w_0\|_{L^\infty(B_1)} \leq \delta
\quad
\Longrightarrow
\quad 
[\partial_z w](B_\delta)\subset \overline C\,,
\end{align*}
for all $w\in \Lip(B_1,\R^2)$ solving \eqref{e:di}.
\end{thm}

\begin{proof}[Proof of Theorem~\ref{t:noninc} from Theorem~\ref{t:locC}]
Let $\e>0$ and $C\subset\R^2$ be an open convex set containing $[\partial_z w_\infty](x_0)$ with 
$\diam(C)\leq \diam([\partial_z w_\infty](x_0))+\e$.
By definition \eqref{e:point} of the essential range at $x_0$, there exists $r>0$ such that $B_r(x_0)\subset B_1$ and $[\partial_z w_\infty](B_r(x_0))\subset C$.
Applying Theorem~\ref{t:locC} to the rescaled map
 $w_0(x)=(w_\infty(x_0+rx)-w_\infty(x_0))/r$,
 and scaling back,
  we deduce the existence of $\delta \in (0,r)$ such that
$[\partial_z w](B_\delta(x_0))\subset\overline C$,
  for any $w$ solving \eqref{e:di} and such that $\|w-w_\infty\|_{L^\infty(B_1)}\leq \delta$.
Fixing $J\geq 0$ such that
 $\|w_j-w_\infty\|_{L^\infty(B_1)}\leq\delta$ and $|x_j-x_0|\leq\delta/2$ for all $j\geq J$,
we have therefore
 \begin{align*}
 [\partial_z w_j](x_j)\subset [\partial_z w_j](B_\delta(x_0))\subset\overline C\,,
 \end{align*}
 hence $\limsup_j\diam( [\partial_z w_j](x_j))\leq \diam(\overline C)=\diam(C)\leq \diam([\partial_z w_\infty](0))+\e$.
 Letting $\e\to 0$ concludes 
the proof of \eqref{eq:usc}.\ 
As said in Remark \ref{rmk:simp}, \eqref{eq:usc1} follows from it directly.
\end{proof}

We will show Theorem \ref{t:locC} in \textsection \ref{sec:proof}, and we start by collecting a few preliminary results.

\subsection{Preliminary results}

\subsubsection{Topological degree}

In this section we recall some well-known results on the topological degree $\deg(u,\Omega,p)$.\ We refer the reader to \cite{Fonseca1995a} and references therein for the definition and a detailed introduction.

\begin{prop}
	Let $\Omega \subset \R^n$ be an open, bounded and connected set and let $u \in W^{1,p}(\Omega,\R^n)$ for $p > n$. Then for all open sets $U \Subset \Omega$ with $|\partial U| = 0$:
	\begin{equation}\label{e:area}
		\int_U v(u(x))\det(Du)(x)dx = \int_{\R^n}v(y)\deg(u,U,y)dy, \quad \forall v \in L^\infty(\R^n),
	\end{equation}
	and, if $\det(Du) > 0$ a.e. in $\Omega$, then
	\begin{equation}\label{e:degN}
		\deg(u,U,y) = N(u,U,y), \quad \text{ for a.e. }y \in \R^n,
	\end{equation}
	where $N(u,U,y) \doteq \#\{x \in U: u(x) = y\}$.
\end{prop}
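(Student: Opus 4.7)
The plan is to obtain both identities via a smooth approximation of $u$, reducing them to the classical area formula with topological degree in the $C^1$ category and to Federer's change-of-variables formula for Sobolev maps. Since $p>n$, Morrey's embedding gives $u\in C^{0,1-n/p}(\overline U)$; I would pick a sequence $u_k\in C^\infty(\overline\Omega,\R^n)$ with $u_k\to u$ both in $W^{1,p}(\Omega)$ and uniformly on $\overline U$. For each smooth $u_k$, identity \eqref{e:area} is classical: at regular values $y$ of $u_k|_U$ one has $\deg(u_k,U,y)=\sum_{x\in u_k^{-1}(y)\cap U}\sign\det(Du_k(x))$; combined with Sard's theorem and the smooth change of variables, this yields \eqref{e:area} for $v\in C_c(\R^n)$, and a standard density argument (using that $\deg(u_k,U,\cdot)$ is bounded and compactly supported) extends it to $v\in L^\infty$.

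To pass to the limit $k\to\infty$ in \eqref{e:area}, the left-hand side is handled by the strong convergence $\det(Du_k)\to\det(Du)$ in $L^{p/n}(U)$ --- the determinant being a polynomial of degree $n$ in the entries of $Du$ and $p/n>1$ --- combined with the uniform bound on $v(u_k)$ and its convergence in measure to $v(u)$, which together give $L^1(U)$-convergence of the product. On the right-hand side, since $p>n$ guarantees that $u$ satisfies Luzin's $(N)$ property, the hypothesis $|\partial U|=0$ forces $|u(\partial U)|=0$; thus for a.e.\ $y\in\R^n$ one has $y\notin u(\partial U)$, and the topological stability of the degree under uniform convergence of $u_k$ to $u$ on $\partial U$ yields $\deg(u_k,U,y)\to\deg(u,U,y)$ (indeed, equality holds for $k$ large since the degree is integer-valued). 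A $k$-uniform $L^\infty$ bound on $\deg(u_k,U,\cdot)$ with a common compact support (coming from the uniform $C^0$ bound on $u_k$) then justifies dominated convergence, establishing \eqref{e:area}.

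For \eqref{e:degN}, I would invoke Federer's area formula --- valid for $u\in W^{1,p}(\Omega,\R^n)$ with $p>n$ since such maps have the Luzin $(N)$ property ---
\[
\int_U v(u(x))|\det(Du)(x)|\,dx=\int_{\R^n} v(y)\,N(u,U,y)\,dy,\qquad\forall v\in L^\infty(\R^n).
\]
Under the hypothesis $\det(Du)>0$ a.e.\ in $\Omega$, the left-hand side agrees with that of \eqref{e:area}, so $\int_{\R^n} v(y)(\deg(u,U,y)-N(u,U,y))\,dy=0$ for every $v\in L^\infty$, forcing $\deg(u,U,y)=N(u,U,y)$ for a.e.\ $y\in\R^n$. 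The main obstacle I expect is the dominated convergence argument on the right of \eqref{e:area}: pointwise convergence of the degree off $u(\partial U)$ is essentially automatic, but producing a $k$-independent integrable majorant requires the common compact support of $\deg(u_k,U,\cdot)$ together with a $k$-uniform $L^\infty$ bound, both of which follow from the uniform $C^0$ and $W^{1,p}$ bounds on $u_k$, but must be checked carefully using only the ingredients available at this level of regularity.
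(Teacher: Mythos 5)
The paper's own ``proof'' is just a citation of \cite{Fonseca1995a} (Theorems 5.31 and 5.30), so your reconstruction by smooth approximation is the natural route, and your deduction of \eqref{e:degN} from \eqref{e:area} plus the area formula for $W^{1,p}$-maps with $p>n$ is exactly the intended one. There is, however, a genuine gap in your limit passage on the left-hand side of \eqref{e:area}: for a general $v\in L^\infty(\R^n)$ it is \emph{not} true that $v(u_k)\to v(u)$ in measure on $U$. Uniform convergence $u_k\to u$ gives no control whatsoever on $v\circ u_k$ when $v$ is merely measurable (think of $v=\mathbf 1_E$ with $E$ a fat Cantor-type set and $u_k$ a tiny perturbation of $u$); indeed $v(u(x))$ itself is only defined for a.e.\ $x$ modulo the preimage of a null set. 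The correct ordering is to pass to the limit in $k$ only for $v\in C_c(\R^n)$, where $v(u_k)\to v(u)$ uniformly on $\overline U$, and then to perform the $L^\infty$-extension \emph{on the limit map $u$ itself}: since $u\in W^{1,p}$ with $p>n$ satisfies Luzin (N) and the area formula, one has $\det(Du)=0$ a.e.\ on $u^{-1}(E)$ for every null set $E$, and this is what makes $\int_U v_j(u)\det(Du)\,dx\to\int_U v(u)\det(Du)\,dx$ when $v_j\to v$ boundedly a.e.

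A second, smaller issue is your claimed $k$-uniform $L^\infty$ bound on $\deg(u_k,U,\cdot)$: this is not available. One only controls $|\deg(u_k,U,\cdot)|\le N(u_k,U,\cdot)$, which is bounded uniformly in $L^1$ (by $\int_U|\det Du_k|$), not in $L^\infty$. The right-hand side should instead be handled by observing that $\deg(u_k,U,\cdot)=\deg(u,U,\cdot)$ outside the neighborhood $B_k=\{y:\dist(y,u(\partial U))\le\|u_k-u\|_{L^\infty(\partial U)}\}$ of the null set $u(\partial U)$, and estimating the contribution of $B_k$ via $\int_{B_k}N(u_k,U,y)\,dy=\int_{u_k^{-1}(B_k)\cap U}|\det Du_k|\,dx$, which tends to $0$ using the $L^1(U)$-convergence of $\det Du_k$ together with the fact that $\det Du=0$ a.e.\ on $u^{-1}(u(\partial U))$ (again by the area formula). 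With these two repairs your argument goes through; the remaining ingredients (Luzin (N) giving $|u(\partial U)|=0$, stability of the degree under uniform convergence, and the comparison with Federer's area formula yielding \eqref{e:degN}) are correct.
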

\begin{proof}
 \eqref{e:area} is shown in \cite[Theorem 5.31]{Fonseca1995a}. Through \eqref{e:area} and \cite[Theorem 5.30]{Fonseca1995a}, we also get \eqref{e:degN}.
\end{proof}

\begin{prop}\label{prop:deg}
	Let $\Omega \subset \R^n$ be open and bounded, $v \in C^0(\overline{\Omega}, \R^n)$, and let $p  \in \R^n\setminus v(\partial \Omega)$.\ Then:
	\begin{equation}\label{e:uv}
		\text{$\deg(v,\Omega,p) = \deg(u,\Omega,p)$ if $u \in C(\overline{\Omega}, \R^n)$ and $\|u-v\|_{\infty} < \dist(p,v(\partial \Omega))$};
	\end{equation}
	Moreover, the degree is invariant under homotopies, namely
	\begin{equation}\label{eq:homoto}
		\deg(H(\cdot,t),\Omega,p) = \deg(H(\cdot,0),\Omega,p),
	\end{equation} 
	for every homotopy $H \in C^0(\overline{\Omega}\times [0,1], \R^n)$ such that $p \notin H(\partial \Omega,t)$ for all $t \in [0,1]$.
\end{prop}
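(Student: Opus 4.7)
Both statements are classical properties of the Brouwer degree in finite dimensions and my plan would be to cite the treatment in \cite{Fonseca1995a} already being used in the preceding proposition, verifying only that the hypotheses match. Logically I would treat the homotopy invariance \eqref{eq:homoto} as the more fundamental of the two and deduce the stability statement \eqref{e:uv} from it.

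For the homotopy invariance \eqref{eq:homoto}, the standard route is via the integral representation \eqref{e:area}. By compactness of $\partial\Omega\times[0,1]$ and the assumption that $p\notin H(\partial\Omega,t)$ for every $t$, the number $\eta \doteq \inf_{t\in[0,1]}\dist(p,H(\partial\Omega,t))$ is strictly positive. I would regularize: mollify $H$ into $H_\varepsilon\in C^\infty(\overline\Omega\times[0,1],\R^n)$ uniformly close to $H$, and pick a smooth mollifier $\phi_\eta$ of the Dirac mass at $p$ supported in $B_{\eta/2}(p)$. Then
\begin{align*}
I(t) \doteq \int_\Omega \phi_\eta(H_\varepsilon(x,t))\,\det(D_x H_\varepsilon(x,t))\,dx
\end{align*}
agrees, for $\varepsilon$ small, with $\deg(H(\cdot,t),\Omega,p)$ via \eqref{e:area}, is continuous in $t$, and is integer-valued, hence constant.

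For \eqref{e:uv} the plan is to apply \eqref{eq:homoto} to the straight-line homotopy $H(x,t) = (1-t)v(x) + tu(x)$, which belongs to $C^0(\overline\Omega\times[0,1],\R^n)$ because both $u$ and $v$ are continuous up to the boundary. For every $x\in\partial\Omega$ and $t\in[0,1]$,
\begin{align*}
|H(x,t)-p| \;\geq\; |v(x)-p| - t\,\|u-v\|_{\infty} \;\geq\; \dist(p,v(\partial\Omega)) - \|u-v\|_{\infty} \;>\; 0,
\end{align*}
by hypothesis, so $p\notin H(\partial\Omega,t)$ for every $t$. Applying \eqref{eq:homoto} at $t=0$ and $t=1$ yields $\deg(v,\Omega,p) = \deg(u,\Omega,p)$.

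Since these are textbook features of degree theory I do not anticipate a genuine obstacle; the only care needed is to verify that the continuity-up-to-the-boundary hypothesis and the boundary-avoidance condition required by the chosen reference are genuinely met by the straight-line homotopy and by the regularization chosen above.
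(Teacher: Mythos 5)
Your proposal is correct and takes essentially the same approach as the paper, which simply records these as classical facts and cites \cite[Theorem~2.3]{Fonseca1995a}. The extra detail you supply (homotopy invariance via the mollified integral representation, and \eqref{e:uv} deduced from \eqref{eq:homoto} by the straight-line homotopy with the boundary estimate $|H(x,t)-p|\geq \dist(p,v(\partial\Omega))-\|u-v\|_\infty>0$) is the standard textbook argument and is sound.
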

\begin{proof}
This is classical and can be found e.g. in  \cite[Theorem~2.3]{Fonseca1995a}.
\end{proof}

\begin{rem}\label{rem:deg}
	The degree $\deg(u, \Omega, p)$ is, in general, not defined if $p \in u(\partial \Omega)$.\ Therefore, every time we write expressions involving $\deg(u,\Omega,p)$ and $p \in B$, for some set $B$, this will implicitly entail that $u(\partial\Omega) \cap B = \emptyset$.
\end{rem}

\subsubsection{Quasiregular mappings}\label{sec:qr}

We start by recalling the definition.\
\begin{Def}\label{def:qua}
	Let $\Omega \subset \R^2$ be open. $\varphi\in W^{1,2}(\Omega,\R^2)$ is ($K$-)quasiregular if there exists $K \ge 1$ such that
	\begin{equation}\label{Kqc1}
		|D\varphi|^2(x) \le K\det(D\varphi(x)), \quad \text{for a.e. $x \in \Omega$.}
	\end{equation}
	Equivalently, $f \in W^{1,2}_{\loc}(\Omega,\R^2)$ is quasiregular if and only if there exists $\kappa \in [0,1)$ such that
	\begin{equation}\label{fqccomp}
		|f_{\overline z}|(z) \le \kappa|f_{z}|(z) \text{ for a.e. $z \in \Omega$}.
	\end{equation}
\end{Def}

\begin{rem}\label{r:diff_quasi}
	For instance, if we consider equation \eqref{eq:g} for some $k$-Lipschitz map $h$ for some $0\leq k<1$, then the difference $u=v-w$ of any two solutions $v,w$ to \eqref{eq:g} is quasiregular:
	\begin{align*}
		|u_{\bar z}|^2 =|v_{\bar z}-w_{\bar z}|^2 = |h(v_z)-h(w_z)|^2 \leq k^2 |v_z-w_z|^2=k^2 |u_z|^2\,.
	\end{align*}
\end{rem}

Let us recall a few properties of quasiregular maps, referring the reader to \cite{Bojarski1983,Astala2008,Iwaniec2001} for more details.

\begin{prop}\label{thm:quas}
	Let $\varphi \in W^{1,2}(\Omega,\C)$ be quasiregular. Then:
	\begin{enumerate}
		\item\label{quasma0} there exists $p = p(K) > 2$ such that $\varphi \in W^{1,p}_{\loc}(\Omega,\R^2)$. In particular, $\varphi$ is continuous;
		\item\label{quasma1} $\varphi$ is either constant, or open and discrete;
		\item\label{quasma2} $\varphi$ is either constant, or $\det(D\varphi) > 0$ a.e. in $\Omega$;
		\item\label{quasma3} if $U,W \subset \R^2$ are open sets and $N(\varphi,U,y) = 1$ for a.e. $y \in W$, then $\varphi$ is injective on $U \cap \varphi^{-1}(W)$.
	\end{enumerate}
\end{prop}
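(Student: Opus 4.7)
The proposition collects four classical properties of planar quasiregular mappings. The plan is to extract items \ref{quasma1}--\ref{quasma3} from a single structural tool, the Stoilow factorization theorem, and to handle item \ref{quasma0} by a separate higher-integrability argument. Since these statements are deeply classical, the main task is to organize the references and verify that each consequence fits the exact formulation stated here, rather than to develop new ideas.

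For item \ref{quasma0}, I would follow Bojarski's scheme. Using the null-Lagrangian structure $\det(D\varphi)=\partial_1(\varphi^1\partial_2\varphi^2)-\partial_2(\varphi^1\partial_1\varphi^2)$ combined with Poincar\'e's inequality and \eqref{Kqc1}, one derives, on dyadic squares $Q\Subset\Omega$, a reverse H\"older inequality of the shape
\[
\left(\frac{1}{|Q|}\int_Q |D\varphi|^2\,dx\right)^{1/2} \leq C(K)\,\frac{1}{|2Q|}\int_{2Q} |D\varphi|\,dx.
\]
Gehring's self-improvement lemma then upgrades $L^2$ integrability to $L^p_{\loc}$ for some $p=p(K)>2$, and Morrey's embedding yields continuity.

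For items \ref{quasma1} and \ref{quasma2}, the Stoilow factorization is the key input: any non-constant quasiregular $\varphi$ admits a representation $\varphi=F\circ\chi$, where $\chi\colon\Omega\to\chi(\Omega)$ is a quasiconformal homeomorphism and $F$ is holomorphic on $\chi(\Omega)$. Existence of $\chi$ is provided by the measurable Riemann mapping theorem applied to the Beltrami coefficient $\mu=\varphi_{\bar z}/\varphi_z$, extended arbitrarily on the negligible set $\{\varphi_z=0\}$. Non-constancy of $\varphi$ forces non-constancy of $F$, which by complex analysis is open, discrete, and has $F'\neq 0$ outside a discrete set. Transferring these properties through $\chi$ gives \ref{quasma1}, and combining with $\det(D\chi)>0$ a.e. gives \ref{quasma2}. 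For \ref{quasma3}, I invoke \eqref{e:area}--\eqref{e:degN}: the hypothesis $N(\varphi,U,y)=1$ for a.e. $y\in W$ excludes $\varphi$ being constant on components of $\Omega$ meeting $\varphi^{-1}(W)$, so the previous items apply on those components. If distinct $x_1,x_2\in U\cap\varphi^{-1}(W)$ shared an image $y_0$, openness and discreteness would yield disjoint neighborhoods $V_1,V_2\Subset U$ whose images each contain a common open ball $B\subset W$ around $y_0$, forcing $N(\varphi,U,y)\geq 2$ for a.e. $y\in B$, a contradiction.

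The main obstacle is \ref{quasma1}: the Reshetnyak--Stoilow theorem that non-constant quasiregular maps are open and discrete. Every other item becomes a short consequence once the factorization is available, but a self-contained derivation of Stoilow's theorem requires the full machinery of the Beltrami equation (existence, uniqueness, and H\"older regularity of its solutions). For this reason I would not reprove it, and instead cite \cite{Bojarski1983,Astala2008,Iwaniec2001}.
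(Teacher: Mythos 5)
Your proposal is correct and follows essentially the same route as the paper: items \eqref{quasma0}--\eqref{quasma2} are classical and are handled by citation (the paper cites Bojarski for the higher integrability and \cite[Corollary~5.5.2]{Astala2008}, which is exactly the Stoilow-factorization statement you sketch), while your proof of \eqref{quasma3} via openness, disjoint neighborhoods of two preimages, and the resulting set of points with $N\geq 2$ is word-for-word the paper's argument (taken from \cite{DPGT24}). The only caveat worth noting is that your claim that the hypothesis on $N$ excludes constancy on components meeting $\varphi^{-1}(W)$ is not quite accurate (a constant component only affects $N$ at a single value of $y$, which is negligible), but this edge case is immaterial in the paper's applications, where non-constancy is checked separately.
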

\begin{proof}
	\eqref{quasma0} can be found in \cite[Theorem 5.1]{Bojarski1983} (see also \cite{Astala1994} for the precise expression of $p$). \eqref{quasma1}-\eqref{quasma2} can be found in \cite[Corollary 5.5.2]{Astala2008}. Let us show \eqref{quasma3}, following the same argument of \cite[Lemma 4.3]{DPGT24}. Observe that $\varphi$ is open by \eqref{quasma1}. Suppose by contradiction that there exist distinct $x_1,x_2 \in U \cap \varphi^{-1}(W)$ such that $\varphi(x_1) = \varphi(x_2) = y$.\ Then, taking a sufficiently small $r > 0$ such that 
	\begin{equation}\label{e:disj}
		B_r(x_1)\cap B_r(x_2) = \emptyset
	\end{equation}
	and $B_r(x_i) \subset U \cap \varphi^{-1}(W)$ for all $i$, define the open set $V = \varphi(B_r(x_1))\cap \varphi(B_r(x_2)) \subset W$.\ Observe that $y \in V$. Since $V$ is open and nonempty, our assumption implies that we can find $p \in V$ for which $N(\varphi,U,p) = 1$. This yields a contradiction with the definition of $V$ and \eqref{e:disj}.
\end{proof}

We state the \emph{stability} result for quasiregular maps established in \cite[Proposition 1]{KS08}.\ In that reference, the statement is shown in $\R^n$, but we will only need it for planar maps.\ We derive from it Corollary \ref{c:stab}, which is essentially contained in \cite[Proposition~2]{KS08}.

\begin{prop}\label{p:stab}
	Let $u:\Omega \subset \R^2 \to \R^2$ be a $K$-quasiregular mapping such that $|Du(x)| \ge \eps$ a.e. in $\Omega$.\ Let $G \Subset \Omega$ and assume that $M\doteq \sup_{y \in \R^2}N(u,G,y)< +\infty$. Then there exist a constant $\delta = \delta(\eps,K,M)>0$ and for any $x_0 \in G$ a radius $r(x_0)> 0$ such that for any Lipschitz mapping $\phi : \Omega \to \R^2$ with $\|D\phi\|_\infty < \delta$,
	\[
	\min_{|x-x_0|= r}|u^t(x)-u^t(x_0)| \ge \delta r, \quad \text{ for all } r < r(x_0), t \in [0,1], \text{ if }u^t = u + t\phi.
	\]
\end{prop}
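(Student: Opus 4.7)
The plan is threefold: reduce to the unperturbed case $t=0$ by the triangle inequality, leverage the hypothesis $|Du|\ge\eps$ to realize $u$ as a local $K$-quasiconformal homeomorphism near any $x_0\in G$, and combine the area formula with Mori-type quasisymmetry to produce a linear lower bound $\min_{|x-x_0|=r}|u(x)-u(x_0)|\gtrsim r$.

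For the reduction, if $|x-x_0|=r$ and $\|D\phi\|_\infty<\delta$, the triangle inequality gives $|u^t(x)-u^t(x_0)|\ge|u(x)-u(x_0)|-\delta r$, so it suffices to produce a constant $c=c(\eps,K,M)>0$ and, for each $x_0\in G$, a radius $r(x_0)>0$ with $\min_{|x-x_0|=r}|u(x)-u(x_0)|\ge 2c\,r$ for all $r<r(x_0)$, and then take $\delta=c$.

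For the structural step I invoke Proposition \ref{thm:quas}: since $|Du|\ge\eps>0$, the map $u$ is non-constant, open, discrete, and $\det(Du)\ge|Du|^2/K\ge\eps^2/K$ a.e. The uniform a.e.\ bound $|Du|\ge\eps$ rules out branch points of $u$: under Stoilow factorization $u=F\circ\psi$ with $\psi$ a $K$-quasiconformal homeomorphism and $F$ holomorphic, a branch point of $u$ at $x_0$ would force $F'(\psi(x_0))=0$ and hence make $|Du|=|F'\circ\psi|\cdot|D\psi|$ vanish near $x_0$ on a set of positive measure. Thus $u$ is locally a homeomorphism, and I pick $r(x_0)>0$ so that $u|_{B_{2r(x_0)}(x_0)}$ is a $K$-quasiconformal embedding onto a Jordan domain.

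With $m(r)=\min_{|x-x_0|=r}|u(x)-u(x_0)|$ and $M(r)=\max_{|x-x_0|=r}|u(x)-u(x_0)|$, for $r<r(x_0)$ the image $u(B_r(x_0))$ is the bounded complementary component of the Jordan curve $u(\partial B_r(x_0))\subset \overline{B_{M(r)}(u(x_0))}$, so $u(B_r(x_0))\subset\overline{B_{M(r)}(u(x_0))}$. The area formula with $N(u|_{B_r(x_0)},\cdot)\le 1$ yields
\begin{equation*}
\tfrac{\eps^2}{K}\pi r^2\le\int_{B_r(x_0)}\det(Du)\,dx=|u(B_r(x_0))|\le\pi M(r)^2,
\end{equation*}
while Mori's theorem for the quasiconformal homeomorphism $u|_{B_{2r(x_0)}(x_0)}$ supplies $M(r)\le H(K)m(r)$ for $r<r(x_0)$ with $H(K)$ depending only on $K$. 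Combining the two inequalities gives $m(r)\ge r\eps/(\sqrt{K}H(K))$, proving the reduced claim with $c=\eps/(2\sqrt{K}H(K))$. The main obstacle is the local-homeomorphism reduction under the mere a.e.\ bound $|Du|\ge\eps$; the Stoilow-based argument above handles this in dimension two and, incidentally, shows that $\delta$ can be taken to depend only on $\eps$ and $K$, with $M$ entering only through the higher-dimensional variant of the argument of \cite{KS08}.
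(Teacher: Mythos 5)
There is a genuine gap, and it sits at the heart of your argument: the claim that the a.e.\ bound $|Du|\ge\eps$ rules out branch points, so that $u$ is a local homeomorphism near each $x_0\in G$. This is false. The map $u(z)=z^2/|z|$ --- which appears in the paper itself as $w=f_2$ in Remark~\ref{r:eps*} --- satisfies $|\partial_z u|=3/2$ and $|\partial_{\bar z}u|=1/2$ everywhere, hence $|Du|^2=5$ and $\det(Du)=2$ a.e.\ by \eqref{alg}; it is therefore $K$-quasiregular with $K=5/2$ and $|Du|\ge\eps>0$ a.e., it has $N(u,G,y)\le 2$ so all hypotheses of the proposition hold, and yet $u(z)=u(-z)$, so $u$ is $2$-to-$1$ near the origin and not locally injective there. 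Your Stoilow-based justification breaks on exactly this example: in the factorization $u=F\circ\psi$ one has $F(w)=w^2$ and $\psi(z)=z|z|^{-1/2}$, and $|D\psi(z)|\sim|z|^{-1/2}$ blows up at the branch point precisely fast enough to compensate the vanishing of $|F'(\psi(z))|\sim|z|^{1/2}$, so the product $|F'\circ\psi|\cdot|D\psi|$ stays bounded below. The argument tacitly assumes $|D\psi|$ is bounded, which a quasiconformal homeomorphism (only $W^{1,2}_{\loc}$) need not satisfy.

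Everything downstream depends on this false step: the area-formula estimate uses $N(u|_{B_r(x_0)},\cdot)\le 1$, and Mori's quasisymmetry bound $M(r)\le H(K)\,m(r)$ is a statement about quasiconformal \emph{homeomorphisms}. In the presence of branching you only get $N\le M$ in the area formula, which lowers the bound on $M(r)$ by a factor $M^{-1/2}$, and you need a circular-distortion inequality for quasiregular maps of bounded multiplicity to pass from $M(r)$ to $m(r)$; this is exactly where $\delta$ acquires its dependence on $M$, contrary to your closing remark that $M$ is irrelevant in two dimensions. Note that the paper does not reprove this statement: it quotes it verbatim from \cite[Proposition~1]{KS08}, whose proof handles the multiplicity issue. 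Your reduction to $t=0$ by the triangle inequality and the containment $u(B_r(x_0))\subset\overline{B_{M(r)}(u(x_0))}$ (which only needs openness of $u$) are fine, but the core of the argument must be replaced.
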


\begin{rem}\label{r:M}
There are several ways to see that the assumption $M<+\infty$ in the previous proposition is automatically satisfied.
 One can use, for instance, Stoilow Factorization \cite[Theorem 5.5.1]{Astala2008} or more purely topological methods valid in all dimensions \cite[Proposition 4.10(3)]{Rickman1993}.
\end{rem}

\begin{cor}\label{c:stab}
Let $R,\e>0$, $u\in \Lip(B_{R},\R^2)$ such that $u(0)=0$, and $\Gamma\subset\R^{2\times 2}$ be a compact and path-connected set of matrices such that
\begin{equation}\label{e:BELOW}
\det(Du-A)\geq\e\quad\text{a.e. in }B_R\,, \quad\forall A\in\Gamma\,.
\end{equation}
Then there exists $r_0=r_0(u,\Gamma)\in (0,R)$ 
and $\delta=\delta(\e,\Gamma)>0$ such that
\begin{align*}
A \mapsto \deg(u-L_A,B_r,y)
\text{ is defined and constant on }\Gamma 
\text{ for all }r\in (0,r_0) \text{ and } y \in \overline{B_{\delta r}}\,.
\end{align*} 
\end{cor}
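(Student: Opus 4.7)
The plan is to reduce to the stability result Proposition~\ref{p:stab}, and then combine it with a compactness argument on $\Gamma$ and the homotopy invariance of the topological degree.

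First, for each $A\in\Gamma$ I would verify that $u-L_A$ is quasiregular on $B_R$ with uniformly controlled constants. Indeed, $|D(u-L_A)|\leq \Lip(u)+\sup_{A'\in\Gamma}|A'|$ while $\det D(u-L_A)\geq\e>0$, so $u-L_A$ is $K$-quasiregular in the sense of Definition~\ref{def:qua} with $K\leq(\Lip(u)+\sup_{A'\in\Gamma}|A'|)^2/\e$, and the elementary inequality $|M|^2\geq 2|\det M|$ yields the a.e.\ lower bound $|D(u-L_A)|\geq\sqrt{2\e}$. I would then apply Proposition~\ref{p:stab} to $\tilde u\doteq u-L_A$ on $\Omega=B_R$ with $G=B_{R/2}$ and $x_0=0$, invoking Remark~\ref{r:M} to supply the finite-multiplicity hypothesis. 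This produces constants $\delta_A>0$ and $r_A>0$ such that, choosing the linear perturbation $\phi=L_{A-B}$ for any matrix $B$ with $|A-B|<\delta_A$ (so $\|D\phi\|_\infty<\delta_A$), taking $t=1$ and using $u(0)=0$ yields
\[
\min_{|x|=r}|u(x)-Bx|\;\geq\;\delta_A r,\qquad\forall r\in(0,r_A).
\]

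Next, by compactness of $\Gamma$ I would extract a finite subcover of $\{B_{\delta_A/2}(A)\}_{A\in\Gamma}$, say indexed by $A_1,\dots,A_N\in\Gamma$, and set $\delta\doteq\tfrac12\min_i\delta_{A_i}$ and $r_0\doteq\min_i r_{A_i}$. Any $A\in\Gamma$ belongs to some $B_{\delta_{A_i}/2}(A_i)$, so the estimate above (applied with the base $A_i$ and $B=A$) gives
\[
|u(x)-Ax|\;\geq\;\delta_{A_i} r\;>\;\delta r\;\geq\;|y|\qquad\text{for }|x|=r,\;r\in(0,r_0),\;y\in\overline{B_{\delta r}},
\]
so $y\notin (u-L_A)(\partial B_r)$ and $\deg(u-L_A,B_r,y)$ is well-defined. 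For the constancy statement, given $A^{(0)},A^{(1)}\in\Gamma$ I would pick a continuous curve $\gamma\colon[0,1]\to\Gamma$ joining them (path-connectedness of $\Gamma$) and consider the homotopy $H(x,s)\doteq u(x)-\gamma(s)x$. Since $\gamma(s)\in\Gamma$ for every $s$, the same estimate gives $y\notin H(\partial B_r,s)$ for all $s\in[0,1]$, and the homotopy invariance of Proposition~\ref{prop:deg} yields $\deg(u-L_{A^{(0)}},B_r,y)=\deg(u-L_{A^{(1)}},B_r,y)$.

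The main obstacle is obtaining constants $\delta_{A_i},r_{A_i}$ that are uniform across $\Gamma$: Proposition~\ref{p:stab} produces them only pointwise in $A$, and their dependence on the base matrix is not a priori controllable. Compactness of $\Gamma$ (which bounds $|A|$ and hence the quasiregularity constant $K$ uniformly) together with path-connectedness is precisely what converts these pointwise estimates into the uniform lower bound on $|u(x)-\gamma(s)x|$ for $|x|=r$ needed simultaneously for well-definedness of the degree and for the homotopy invariance argument.
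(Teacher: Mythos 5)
Your proposal is correct and follows essentially the same route as the paper's own proof: apply Proposition~\ref{p:stab} to $u-L_A$ at the origin with linear perturbations $\phi=L_{A-B}$, pass to a finite subcover of $\Gamma$ to get uniform constants $r_0,\delta$, and conclude by path-connectedness and homotopy invariance of the degree. The only (welcome) addition is your explicit verification of the hypothesis $|D(u-L_A)|\geq\sqrt{2\e}$, which the paper leaves implicit.
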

\begin{proof}
Fix $\Lambda>0$ such that $|Du|+|A|\leq \Lambda$ for all $A\in\Gamma$ and a.e. $x \in B_R$.\ The finiteness of such $\Lambda$ and \eqref{e:BELOW} imply that the map $u^A\doteq u-L_A$ is a nonconstant quasiregular mapping in $B_{R}$ for all $A\in\Gamma$. 
Thanks to Remark~\ref{r:M} we may apply Proposition~\ref{p:stab} 
to $u^A$ at $x_0 = 0$. 
Recalling that $u^{A'}(0) = 0$ 
for all $A' \in \Gamma$, this yields the existence of $r_A =r_A(u)\in (0,R)$ and $\delta_A=\delta_A(\e,\Lambda) >0$ 
such that
\begin{align*}
\dist(0,u^{A'}(\partial B_r))>\delta_A r\quad\forall r\in (0,r_A),\; \forall A'\in B_{\delta_A}(A)\,.
\end{align*}
Covering $\Gamma$ with a finite number of open balls $B_{\delta_A}(A)$,
 we infer the existence of $r_0>0$ and $\delta>0$ such that
\begin{align*}
\dist(0,u^{A}(\partial B_r))>\delta r\quad\forall r\in (0,r_0),\; \forall A\in\Gamma\,.
\end{align*}
Thus for all $A\in\Gamma$ we have
$u^A(\partial B_r)\cap \overline{B_{\delta r}}=\emptyset$,
hence the degree
$\deg(u^A,B_r,y)$ is well-defined for all $y\in \overline {B_{\delta r}}$.
Moreover, it is independent of $A\in\Gamma$ by path-connectedness of $\Gamma$ and homotopy invariance of the degree, see Proposition~\ref{prop:deg}.
\end{proof}

\subsubsection{Approximation of the problem}

Thanks to Proposition~\ref{p:12} and Lemma~\ref{l:extension} we may
 write 
 $K\subset\hat K$,
 where $\hat K$ still fulfills \eqref{e:diffinc} and  
 is the graph of $h\colon\C\to\C$  satisfying
\begin{align}
	&|h(a)-h(b)| \le (1-\tilde\sigma(|a-b|))|a-b|, \quad \text{for all }a,b \in \C, \label{eq:h1}\\
	&\|h\|_{L^\infty(\C,\C)} < + \infty, \label{eq:h2}
\end{align}
for some $\tilde\sigma$ fulfilling \eqref{e:sigmatilde}.
\ 
For a Lipschitz map $w$ solving \eqref{e:di} we have therefore
\begin{equation}\label{eq:w}
	w_{\bar z} = h(w_z) \text{ a.e. in }B_1.
\end{equation}
Let $w$ be a solution to \eqref{eq:w} and $h$ satisfy \eqref{eq:h1}-\eqref{eq:h2}.\ We want to obtain $w$ as limit of solutions to strongly elliptic problems.
 
\begin{lem}\label{lem:app}
	For every $\eps \in (0,1)$, the nonlinear Beltrami system
	\begin{equation}\label{eq:eps}
		\begin{cases}
			\partial_{\bar z}w^\eps = (1-\eps)h(\partial_zw^\eps) \text{ in }B_1,\\
			\Ree(w^\eps) = \Ree(w) \text{ on }\partial B_1.
		\end{cases}
	\end{equation}
	admits a unique solution $w^\eps \in W^{1,2}(B_1,\C)$ with $w^\eps_2$ of zero average. Furthermore, $w_\eps \to w$ locally uniformly and locally strongly in $W^{1,p}$ for all $p\in [1,\infty)$. 
\end{lem}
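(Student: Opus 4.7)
The proof splits into existence and uniqueness of $w^\varepsilon$ at fixed $\varepsilon\in(0,1)$, and passage to the limit $\varepsilon\to 0$.

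\textbf{Existence at fixed $\varepsilon$.} Via the reverse of Proposition \ref{p:12}, the Beltrami equation in \eqref{eq:eps} is equivalent to the differential inclusion $Dw^\varepsilon\in K^\varepsilon$, where $K^\varepsilon$ is the graph of $(1-\varepsilon)h$ in the conformal/anticonformal decomposition \eqref{e:shn}. The $(1-\varepsilon)$-Lipschitz property of $(1-\varepsilon)h$ combined with \eqref{alg} yields the uniform ellipticity $\det(X-Y)\geq (2\varepsilon-\varepsilon^2)|[X-Y]_{\mathcal H}|^2\geq c(\varepsilon)|X-Y|^2$ for all $X,Y\in K^\varepsilon$. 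Proposition \ref{p:11} then produces a continuous field $G^\varepsilon\colon\R^2\to\R^2$, of linear growth (thanks to \eqref{eq:h2}) and uniformly monotone with constant $c(\varepsilon)$, such that solving \eqref{eq:eps} amounts to finding $u^\varepsilon=\Ree w^\varepsilon\in u+W^{1,2}_0(B_1)$ with $\dv G^\varepsilon(Du^\varepsilon)=0$. The Browder--Minty theorem applied to the associated monotone operator on $W^{1,2}_0(B_1)$ delivers such a $u^\varepsilon$. The imaginary part $v^\varepsilon$ is then obtained as a stream function: since $G^\varepsilon(Du^\varepsilon)$ is divergence-free on the simply-connected $B_1$, $-JG^\varepsilon(Du^\varepsilon)$ is exact, hence $v^\varepsilon\in W^{1,2}(B_1)$ satisfies $Dv^\varepsilon=-JG^\varepsilon(Du^\varepsilon)$, uniquely determined up to a constant fixed by the zero-average normalization.

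\textbf{Uniqueness.} For two solutions $w_1,w_2$, set $\psi=w_1-w_2$. The $(1-\varepsilon)$-Lipschitz property of $(1-\varepsilon)h$ gives $|\partial_{\bar z}\psi|\leq(1-\varepsilon)|\partial_z\psi|$ a.e. Using the null-Lagrangian identity $\det D\psi=\partial_1(\psi_1\partial_2\psi_2)-\partial_2(\psi_1\partial_1\psi_2)$ together with the boundary trace $\Ree\psi=0$ on $\partial B_1$, integration by parts yields $\int_{B_1}\det D\psi=0$. Combined with \eqref{alg} this implies $\int|\partial_z\psi|^2=\int|\partial_{\bar z}\psi|^2\leq(1-\varepsilon)^2\int|\partial_z\psi|^2$, forcing $\partial_z\psi=\partial_{\bar z}\psi=0$ a.e. Hence $\psi$ is constant, and the boundary/average conditions yield $\psi\equiv 0$.

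\textbf{Convergence as $\varepsilon\to 0$.} The main obstacle is a uniform Lipschitz bound $\sup_\varepsilon\|Dw^\varepsilon\|_{L^\infty}<\infty$: since $c(\varepsilon)\to 0$, standard De~Giorgi--Nash--Moser estimates degenerate. I would obtain it by coupling a boundary-gradient barrier (available because $u$ is Lipschitz and $h$ is bounded, so $G^\varepsilon$ has linear growth uniformly in $\varepsilon$) with the two-dimensional interior gradient maximum principle applied to $|Du^\varepsilon|^2$, whose sign structure does not depend on the ellipticity ratio. Granting this bound, the essential range $\bigcup_\varepsilon[\partial_z w^\varepsilon]$ is contained in a fixed compact $F\subset\C$, and $K\doteq\{(a,h(a))\colon a\in F\}$ is a compact set satisfying \eqref{e:diffinc} by \eqref{eq:h1}. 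The identity $Dw^\varepsilon-(\partial_z w^\varepsilon,h(\partial_zw^\varepsilon))=(0,-\varepsilon h(\partial_zw^\varepsilon))$ gives
\begin{align*}
\dist(Dw^\varepsilon,K)\leq \varepsilon\|h\|_{L^\infty}\to 0
\end{align*}
in $L^\infty$, a fortiori in $\mathcal D'(B_1)$. Theorem \ref{t:comp} then yields strong $W^{1,1}_{\loc}$-precompactness of the equi-Lipschitz family $(w^\varepsilon)$, upgraded to $W^{1,p}_{\loc}$ for every $p<\infty$ by the uniform Lipschitz bound, and to local uniform precompactness by Arzel\`a--Ascoli. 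Any cluster point $w^*$ has $Dw^*\in K$ a.e., hence solves $\partial_{\bar z}w^*=h(\partial_zw^*)$, with the same Dirichlet datum and zero-average normalization as $w$. Repeating the uniqueness computation above, with the contraction factor $(1-\varepsilon)$ replaced by the pointwise factor $1-\tilde\sigma(|\partial_z(w^*-w)|)$ -- positive wherever $|\partial_z(w^*-w)|>0$ by the strict contractivity of $h$ -- we conclude $w^*=w$, and the full family converges.
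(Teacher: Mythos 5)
Your existence and uniqueness arguments are fine and constitute a legitimate alternative to the paper's route: the paper simply invokes the solvability result of \cite{FS08} for the system $\partial_{\bar z}v=H(z,\partial_z v)+\sigma(z)$ with zero real boundary data (applied to $H_\eps(z,\xi)=(1-\eps)(h(\partial_zw+\xi)-h(\partial_zw))$, $\sigma=-\eps h(\partial_zw)$), whereas you go through the monotone-operator formulation via Propositions \ref{p:11}--\ref{p:12} and Browder--Minty. The determinant/null-Lagrangian computation you use for uniqueness is exactly the mechanism the paper also relies on.

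The convergence step, however, has a genuine gap. Your entire argument hinges on a uniform-in-$\eps$ Lipschitz bound $\sup_\eps\|Dw^\eps\|_{L^\infty}<\infty$: you need it to place $[\partial_zw^\eps]$ in a fixed compact $F$, to have an equi-Lipschitz family so that Theorem \ref{t:comp} applies (ellipticity is defined only for equi-Lipschitz sequences), to upgrade to $W^{1,p}_{\loc}$, and to pass the boundary condition to the cluster point $w^*$ so that the uniqueness computation can be run for $w^*-w$. But this bound is not available by the method you sketch: the gradient maximum principle reduces everything to a boundary gradient estimate, and barrier constructions for that estimate require $C^{1,1}$-type boundary data (or a bounded slope condition), not merely the trace of a Lipschitz function; in addition $G^\eps$ is only continuous, so even differentiating the equation to get the maximum principle for directional derivatives needs a further regularization. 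Interior estimates do not help either, since the ellipticity ratio of $G^\eps$ blows up like $1/\eps$. The paper deliberately avoids any $L^\infty$ gradient bound. It obtains a global uniform $W^{1,2}(B_1)$ bound by writing $\int|Dw^\eps|^2=2\int\det(Dw^\eps)+4\int|w^\eps_{\bar z}|^2$, using $\|h\|_\infty<\infty$ for the second term and the boundary condition plus Young's inequality for the first; it then upgrades to uniform $W^{1,p}_{\loc}$ bounds for every finite $p$ via the Beurling transform. Crucially, the limit is identified without any compactness-plus-uniqueness argument: the exact identity $0=\int_{B_1}\det(Dw^\eps-Dw)=\int|\partial_zw^\eps-\partial_zw|^2-|(1-\eps)h(\partial_zw^\eps)-h(\partial_zw)|^2$ gives $\int\big(|\partial_zw^\eps-\partial_zw|^2-|h(\partial_zw^\eps)-h(\partial_zw)|^2\big)\le C\eps$, and the strict contractivity \eqref{eq:h1} then yields convergence in measure of $\partial_zw^\eps$ to $\partial_zw$ directly; combined with the uniform $W^{1,p}_{\loc}$ bounds this gives the stated convergences. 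You should replace your compactness step by this direct energy identity, or else supply a genuinely different source of equi-Lipschitz bounds, which I do not believe exists here.
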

\begin{proof}
	The solvability of this system can be found in \cite[Proposition 2]{FS08}, where it is shown that 
if $H: B_1\times \mathbb{C} \to \mathbb{C}$ is a measurable function satisfying $H(z,0) = 0$ for a.e. $z \in B_1$ and
	\[
	|H(z,w_1)-H(z,w_2)| \le k|w_1-w_2|, \text{ for a.e. $z \in B_1$ and all $w_1,w_2\in \mathbb{C}$ for some $k < 1$},
	\]
	then for any $\sigma \in L^2(B_1,\mathbb{C})$ there exists a $W^{1,2}(B_1,\mathbb{C})$ solution $v$ (which is unique up to the addition of a constant to the second component) of 
	\begin{equation}\label{eq:epsv}
		\begin{cases}
			\partial_{\bar z}v = H(z,\partial_zv) + \sigma(z) \text{ in }B_1,\\
			\Ree(v) = 0 \text{ on }\partial B_1.
		\end{cases}
	\end{equation}
	To solve \eqref{eq:eps} we can just consider a solution $v^\eps$ to \eqref{eq:epsv} with $$H_\eps(z,\xi) \doteq (1-\eps)(h(\partial_zw + \xi)-h(\partial_zw)) \text{ and } \sigma(z) \doteq - \eps h(\partial_zw).$$ and then set $w^\eps \doteq v^\eps + w$.\ We will now show the required estimates.
 We start by obtaining $W^{1,2}$ bounds uniform in $\eps > 0$.\ Using complex notation, see \eqref{alg}:
	\begin{align*}
		\int_{B_1}|Dw^\eps|^2dx & = 2 \int_{B_1}|w^\eps_z|^2+ |w^\eps_{\bar z}|^2 dx = 2\int_{B_1}|w^\eps_z|^2- |w^\eps_{\bar z}|^2 dx + 4\int_{B_1}|w^\eps_{\bar z}|^2dx.
	\end{align*}
	The last addendum is bounded, since our assumptions on $h$, see \eqref{eq:h2}, and \eqref{eq:eps} imply that actually $w_{\bar z}^\eps$ is equibounded in $L^\infty$. It then suffices to estimate the first addendum using the boundary conditions:
	\[
	\int_{B_1}|w^\eps_z|^2- |w^\eps_{\bar z}|^2 dx \overset{\eqref{alg}}{=} \int_{B_1} \det(Dw^\eps)dx = \int_{B_1}\det(D\psi^\eps)dx,
	\]
	where $\psi^\eps = (\Ree(w), w^\eps_2)$. Now by Young's inequality, for any $\delta > 0$ we find:
	\[
	\int_{B_1}\det(D\psi^\eps)dx \le C(\delta)\int_{B_1}|Dw|^2dx + \delta \int_{B_1}|Dw^\eps|^2dx.
	\]
	Combining all these estimates, we get a universal constant $C$ such that
	\[
	\int_{B_1}|Dw^\eps|^2dx \le C\bigg(C(\delta)\int_{B_1}|Dw|^2dx + \delta \int_{B_1}|Dw^\eps|^2dx + C(h)\bigg).
	\]
By choosing $\delta$ small, we deduce uniform $L^2$ bounds on $Dw_\eps$.\ Through Poincaré inequality, our boundary conditions and the fact that $w_2^\eps$ has zero average, we get that $w_\eps$ is equibounded in $L^p$ for any $p \in [1,+\infty)$, since $W^{1,2}(B_1)\subset L^p(B_1)$. Now we can use the boundedness of the Beurling transform \cite[Theorem~4.5.3]{Astala2008} to obtain interior $W^{1,p}$ bounds for all $p \ge 1$, uniformly in $\eps >0$: indeed, for any $\eta\in C_c^\infty(B_1)$, $(\eta w^\e)_{\bar z}=\eta_{\bar z} w^\e +(1-\e) \eta h(w^\e_z)$ is uniformly bounded in $L^p$ since $w^\eps$ is and $h\in L^\infty(\C)$, and thus $D(\eta w_\e)$ is uniformly bounded in $L^p$.\ We only need to show the convergence of $w^\eps$ to $w$. Using the boundary conditions,
	\begin{align*}
		0 = \int_{B_1}\det(Dw^\eps - Dw)dx &= \int_{B_1}|\partial_zw^\eps - \partial_{z}w|^2 - |\partial_{\bar z}w^\eps - \partial_{\bar z}w|^2dx\\
		& = \int_{B_1}|\partial_zw^\eps - \partial_{z}w|^2 - |(1-\eps)h(\partial_zw^\eps) - h(\partial_z w)|^2dx
	\end{align*}
	Thanks to the uniform bounds shown above, this gives us for all $\eps> 0$:
	\[
	\int_{B_1}|\partial_zw^\eps - \partial_{z}w|^2 - |h(\partial_zw^\eps) - h(\partial_z w)|^2 dx \le C\eps.
	\]
	From \eqref{eq:h1} and \eqref{e:sigmatilde}, this readily yields the convergence in measure of $\partial_zw^\eps$ to $\partial_zw$ as $\eps \to 0$. In turn \eqref{eq:eps} tells us that $\partial_{\bar z}w^\eps$ does the same and this concludes the proof.
\end{proof}

\subsection{Proof of Theorem \ref{t:locC}}\label{sec:proof}

Recall that $K\subset\hat K$,
 where $\hat K$ fulfills \eqref{e:diffinc} and  
 is the graph of $h$  satisfying
 \eqref{eq:h1}-\eqref{eq:h2}-\eqref{eq:w}.\ Without loss of generality we assume $w(0)=w_0(0)=0$. For brevity, recalling \eqref{e:shn}, let
\[
g: \R^2 \to \hat K \text{ be defined as } g(z) \doteq (z,h(z)),  
\text{ and } \hat K_C \doteq g\left(\R^2\setminus C\right)\subset\hat K\,.
\]

\medskip

\noindent
\fbox{Step 1: \emph{degree of $w -L_A$ for $A\in \hat K_C$}.}
We show the existence of $\alpha>0$ and $0<r<1$ such that
\begin{align}\label{eq:deg_w}
\deg(w-L_A,B_r,y)=1\,,\quad\forall y\in\overline {B_{2\alpha}}\,,
\;\forall A\in \hat K_C\,,
\end{align}
provided $\|w-w_0\|_{L^\infty(B_1)}\leq \alpha$.
Recall 
that this assertion implicitly entails $(w-L_A)(\partial B_r)\cap \overline {B_{2\alpha}} =\emptyset$,
see Remark~\ref{rem:deg}.
Thanks to  Proposition~\ref{prop:deg}, it suffices to show
\begin{align}\label{eq:deg_winfty}
\quad\deg(w_0-L_A,B_r,y)=1\,,\quad\forall y\in\overline {B_{4\alpha}}\,,
\;\forall A\in \hat K_C\,.
\end{align}
From \eqref{e:diffinc} and the facts that $Dw_0\in  K$ a.e., 
$\hat K_C\subset\hat K$ and $\hat K_C \cap [Dw_0](B_1)=\emptyset$, we see that
\begin{align*}
\det(Dw_0-A)\geq \e\quad\text{a.e. in }B_1\,,\quad\forall A\in \hat K_C\,,
\end{align*}
where $\e=\sigma(\dist(\hat K_C,[Dw_0](B_1)))>0$.
Noting that the set $\hat K_C\cap g(\overline{B_M})$ is compact and path-connected for any $M>0$ such that $\overline C\subset B_M$, 
we apply Corollary~\ref{c:stab} and deduce the existence of 
$r=r(M)\in (0,1)$, $\delta=\delta(M)>0$ such that
\begin{align*}
A\mapsto \deg(w_0 -L_A,B_{r},y)
\text{ is defined and constant on }\hat K_C \cap g(\overline{B_M})\,,
\text{ for all }y\in \overline B_{\delta}\,.
\end{align*}
To conclude the proof of  \eqref{eq:deg_winfty}, 
it suffices therefore to find $M>0$ such that
\begin{align}\label{eq:deg_winfty_M}
\quad\deg(w_0-L_A,B_{\rho},y)=1\quad\forall y\in\overline{B_{\rho}}\,,
\;\forall \rho\in (0,1)\,,
\;\forall A\in g(B_M^c)\,,
\end{align}
apply this to $\rho=r$ and choose $4\alpha=\min(\delta,r)$, for $r,\delta$ given by this choice of $M$.\ 
To establish \eqref{eq:deg_winfty_M}, let $M\geq 2\|h\|_\infty$, $z\in B_M^c$, so that $|z| \ge M \ge 2\|h\|_\infty$, and $A=g(z)=(z,h(z))$.
Recalling \eqref{alg}, we have 
\begin{align*}
|z|^2 \leq |A|^2=|z|^2+|h(z)|^2 \leq 2|z|^2\,,
\quad
\frac{1}{2}|z|^2 \leq \det(A)=|z|^2-|h(z)|^2 \leq |z|^2\,.
\end{align*}
This implies 
 that $A$ is invertible and 
 $|A^{-1}|=(\det A)^{-1}|A|\ < 4/M$, so
 $M|x| < 8|Ax|$ for all $x\in \R^2\setminus\{0\}$.
Hence we have $L_{A}(\partial B_\rho)\cap \overline{B_{M\rho/8}}=\emptyset$ for all $\rho>0$,
and
\begin{align*}
\deg(-L_{A},B_\rho,y)=1\,,\quad\forall y\in \overline{B_{M\rho/8}}\,.
\end{align*}
Assume moreover $M>16\Lip(w_0)$.\ Then for $0<\rho<1$ we have $\|w_0\|_{L^\infty(B_\rho)}<M \rho/16$ and we can invoke Proposition~\ref{prop:deg} to deduce
\begin{align*}
\deg( w_0 -L_A ,B_\rho,y)=1\,,\quad
\forall y\in \overline{B_{M\rho/16}}\,,\;\forall \rho\in (0,1)\,,
\end{align*}
which gives \eqref{eq:deg_winfty_M} if we impose in addition $M\geq 16$. This proves Step 1.

\medskip

From now on we fix $w\in \Lip(B_1,\R^2)$ solving \eqref{e:di} and such that $\|w-w_0\|_{L^\infty(B_1)}\leq\alpha$, so that \eqref{eq:deg_w} is satisfied.
To conclude the proof of Theorem~\ref{t:locC}, 
the main idea is that
the degree property \eqref{eq:deg_w} is not so far from implying that $w-L_A$ is injective in the preimage of $B_\alpha$, because $w-L_A$ is orientation preserving thanks to \eqref{e:diffinc}.
This injectivity, for all $A\in\hat K_C$, would then imply strong constraints on the difference quotients of $w$
and eventually the desired conclusion that $Dw(x)\in \overline C$.
In fact, that \eqref{eq:deg_w} implies injectivity would be  true if $w-L_A$ were quasiregular, see Proposition~\ref{thm:quas}. Hence we apply the strategy we just described, but at the level of the approximation $w^\e$ provided by Lemma~\ref{lem:app}.

\medskip

\noindent
\fbox{Step 2: \emph{Approximation}.}
We start by fixing $R>0$ such that   $\overline C\subset B_R$ and $K\subset g( B_R)$.
In particular we have $\Lip(w)\leq M\doteq R+\|h\|_\infty$.
Recalling also that $w(0)=0$ we infer,
for $\eta\doteq\min(r/2,\alpha/(8M))>0$,
\begin{align}\label{eq:Beta}
B_{2\eta}\subset B_r\cap (w-L_A)^{-1}(\overline{B_{\alpha/2}})\,,
\quad\forall A\in \hat K_C\cap g(\overline{B_R})\,.
\end{align}
Next we fix $A\in \hat K_C\cap g(\overline{B_R})$, 
and  $a\in\overline{B_R}\setminus C$ such that
$A=g(a)=(a,h(a))$.
For $0<\e<1$ 
we let $w^\e$ be given by Lemma~\ref{lem:app}, and $A^\e =(a,(1-\e)h(a))$,
so that the maps $w^\e-L_{A^\e}$ are quasiregular,
see Remark~\ref{r:diff_quasi}.
Since $w^\e\to w$ uniformly in $\overline{B_r}$
and $|A^\e-A|\leq\e \|h\|_\infty$,
we may 
choose $\e_0>0$ such that
\begin{align*}
\|(w^\e-L_{A^\e})-(w-L_A)\|_{L^\infty(B_r)}< \frac\alpha 2 \,,
\quad\forall \e\in (0,\e_0)\,.
\end{align*}
Note that $\eps_0$ is independent of $A$. 
Next, we fix $\e\in (0,\e_0)$ and combine this with
 \eqref{eq:Beta}
to obtain
\begin{align}\label{eq:Betaeps}
B_{2\eta}\subset B_r\cap (w^\e-L_{A^\e})^{-1}(\overline{B_{\alpha}})\,.
\end{align}
Combining it also with \eqref{eq:deg_w}, which, recall Remark~\ref{rem:deg}, entails $(w-L_A)(\partial B_r)\cap \overline{B_{2\alpha}}=\emptyset$, gives
\begin{align}\label{eq:degweps}
\deg(w^\e-L_{A^\e},B_r,y)=1\,,\quad\forall y\in\overline{{B_{\alpha}}}\,,
\end{align}
thanks to Proposition~\ref{prop:deg}.
In particular, $w^\e-L_{A^\e}$ cannot be constant on $B_r$, hence it has positive Jacobian a.e. by Proposition~\ref{thm:quas}\eqref{quasma2}.
Recalling from Lemma~\ref{lem:app} that $w^\e\in W^{1,p}(B_r)$ for all $p\geq 1$,
we can apply \eqref{e:degN} and deduce
\begin{equation*}
N(w_{\e}-L_{A^\e}, B_{r},y) = \deg(w_\e-L_{A^\e}, B_{r},y)  = 1, \quad \text{for a.e. } y \in  \overline{B_{\alpha}}\,. 
\end{equation*}
Through Proposition \ref{thm:quas}\eqref{quasma3} and \eqref{eq:Betaeps}, we infer that
\begin{align}\label{eq:inj}
w^\e-L_{A^\e}
\text{ is injective on }B_{2\eta}\,,
\quad\forall \e\in (0,\e_0)\,,
\;\forall A^\e\in g^\e(\overline{B_R}\setminus C)\,,
\end{align}
where $g^\e(z)=(z,(1-\e)h(z))$.

\medskip

\noindent
\fbox{Step 3: \emph{Separation and conclusion.}}
For any $x\in B_\eta$ and $0<t<\eta$ 
we deduce 
from \eqref{eq:inj} that 
$x$ and $x+te_1$ have different images through $w^\e-L_{A^\e}$, which can be rewritten as
\begin{align}\label{eq:quotient_eps}
\frac{w^{\e}(x+te_1)-w^{\e}(x)}{t}\neq A^\e e_1\,,
\quad\forall \e\in (0,\e_0)\,,
\;\forall A^\e\in g^\e(\overline{B_R}\setminus C)\,,
\end{align}
For all $\e\in [0,1)$, the continuous map $\Phi^\e\colon\R^2\to\R^2$ given by
\begin{align*}
\Phi^\e(z)=g^\e(z)e_1=ze_1+(1-\e)h(z)e_1\,,
\end{align*}
is injective thanks to
 \eqref{eq:h1},
 and has closed image
 thanks to the boundedness of $h$ \eqref{eq:h2},
 hence $\Phi^\e$ is a homeomorphism of $\R^2$.
Since $C$ and $\R^2\setminus \overline{B_R}$ are disjoint connected sets,
we deduce that the complement of $\Phi^\e(\overline{B_R}\setminus C)$ has
two connected components, $\Phi^\e(\R^2\setminus \overline{B_R})$ and $\Phi^\e(C)$.
The image of the continuous map
\begin{align*}
B_\eta\times (0,\eta) \ni (x,t)\mapsto
\frac{w^{\e}(x+te)-w^{\e}(x)}{t}\,,
\end{align*}
lies therefore, by \eqref{eq:quotient_eps}, in one of these two connected components:
for any  $\e\in (0,\e_0)$ we have 
\begin{align*}
\text{either }
&
\frac{w^{\e}(x+te_1)-w^{\e}(x)}{t}\in \Phi^\e(C)\quad\forall (x,t)\in B_\eta\times (0,\eta)\,,
\\
\text{or }
&
\frac{w^{\e}(x+te_1)-w^{\e}(x)}{t}\in \Phi^\e(\R^2\setminus \overline{B_R})\quad\forall (x,t)\in B_\eta\times (0,\eta)\,.
\end{align*}
One of these must be satisfied for infinitely many $\e\in (0,\e_0)$,
and from the uniform convergences of $w^\e$ to $w$ and $\Phi^\e$ to $\Phi^0\colon z\mapsto g(z)e_1$ 
we deduce 
\begin{align*}
\text{either }
&
\frac{w(x+te_1)-w(x)}{t}\in \Phi^0(\overline{C})\quad\forall (x,t)\in B_\eta\times (0,\eta)\,,
\\
\text{or }
&
\frac{w(x+te_1)-w(x)}{t}\in \Phi^0(\R^2\setminus {B_R})\quad\forall (x,t)\in B_\eta\times (0,\eta)\,.
\end{align*}
Letting $t\to 0$, at every differentiability point $x\in B_\eta$ of $w$ we have therefore
\begin{align*}
\Phi^0(\partial_z w(x))= Dw(x)e_1\in \Phi^0(\overline C)\cup \Phi^0(\R^2\setminus B_R)\,,
\end{align*}
hence $\partial_z w(x)\in \overline C\cup (\R^2\setminus B_R)$, 
as $\Phi^0$ is a homeomorphism.
Combining this with the fact that $Dw\in K\subset g(B_R)$ a.e., which implies $\partial_z w\in B_R$ a.e., we eventually 
deduce that $\partial_z w(x)\in\overline C$ for a.e. $x\in B_\eta$.
Setting $\delta=\min(\alpha,\eta)>0$ concludes the proof of Theorem~\ref{t:locC}.
\qed 

\section{The size of non-differentiability points}\label{sec:size}

In this section we show the following result valid in $\R^n$.

\begin{thm}\label{p:sing}
	Let $u \in \Lip(B_1)$, $B_1 \subset \R^n$, be a solution to \eqref{e:G}, where $G \in C^0([Du],\R^n)$ fulfills
\begin{align}\label{e:monoG_Rn}
(G(b)-G(a),b-a)\geq \sigma(|b-a|)\qquad\forall a,b\in [Du]\,,
\end{align}
for some $\sigma$ as in
\eqref{e:sigmaprop}. Then, $\mathcal{H}^{n-1}(S) = 0$, where $S$ is the set
	\begin{equation}\label{e:Set}
	S \doteq \left\{x \in B_1: \liminf_{r\to 0}\fint_{B_r(x)} \sigma\left(\left|Du(y) - \fint_{B_r(x)}Du(z) dz\right|\right)dy > 0\right\}.
	\end{equation}
\end{thm}
\begin{proof}
As the statement is local, we can assume that $u$ is defined in $B_2$.\ We start with a preliminary estimate. Let $|h| < \frac{1}{2}$ and $\varphi$ be a smooth cut-off function of $B_{\frac{3}{2}}$ inside $B_2$.\ Then, \eqref{e:G} implies:
\[
\int_{B_2}(G(Du(x + h))-G(Du(x)),D[\varphi(x)(u(x + h)-u(x))])dx = 0.
\]
Routine calculations and \eqref{e:monoG_Rn} then yield
\begin{equation}\label{e:Dh}
	\begin{split}
\int_{B_{\frac{3}{2}}} \sigma(|Du(x + h) - Du(x)|)dx &\le \int_{B_2}\varphi \sigma(|Du(x + h) - Du(x)|)dx\\
 &\le \int_{B_2}|D\varphi||u(x + h)-u(x)||G(Du(x + h))-G(Du(x))|dx \\
&\le C|h|\int_{B_2}|D\varphi||G(Du(x + h))-G(Du(x))|dx \le C|h|D(|h|),
\end{split}
\end{equation}
for $C = \Lip(u)$ and $$D(t) \doteq \sup_{h:|h| \le t}\int_{B_2}|D\varphi||G(Du(x + h))-G(Du(x))|dx.$$ Observe that
\begin{equation}\label{eq:Dh}
	\lim_{t\to 0}D(t) = 0.
\end{equation}
We now turn to our main goal, i.e. showing that $\mathcal{H}^{n-1}(S) = 0$. 
We notice that 
\begin{equation}\label{e:setS}
S \subset \bigcup_{p \ge 1} \bigcup_{m \ge 10} S_{p,m} = \bigcup_{p \ge 1} \bigcup_{m \ge 10} \bigcap_{0 < r < \frac{1}{m}} E_{p,r},
\end{equation}
where
\[
E_{p,r} \doteq \left\{x \in B_1: \fint_{B_r(x)}\sigma\left(\left|Du(y) - \fint_{B_r(x)}Du(z) dz\right|\right)dy \ge \frac{1}{p}\right\}.
\] 
Let us recall that $\mathcal H^{n-1}(E)=\lim_{r\to 0}\mathcal H^{n-1}_r(E)$, where, for $r>0$,
\[
\mathcal{H}_{r}^{n-1}(E) \doteq  \inf\left\{\sum_{i = 1}^\infty\diam(C_i)^{n-1}: E \subset \bigcup_{i = 1}^\infty C_i, \diam(C_i) \le r\right\}
\]
see \cite[Definition 2.1]{Evans2015}.\  Our definition differs from that of \cite[Definition 2.1]{Evans2015} by a constant factor, which is anyway irrelevant for what we need to show.\ We claim that:
\begin{equation}\label{eq:ficlar}
\lim_{r \to 0}\mathcal{H}_{5r}^{n-1}(E_{p,r}) = 0.
\end{equation}
If this holds, then $\mathcal{H}^{n-1}(S) = 0$, since
\begin{equation}\label{eq:ineqS}
\mathcal{H}^{n-1}_{5r}(S_{p,m}) \le \mathcal{H}^{n-1}_{5r}(E_{p,r}), \text{ for all } r < \frac{1}{m},
\end{equation}
and again by \cite[Definition 2.1]{Evans2015}:
\[
0 \overset{\eqref{eq:ficlar}-\eqref{eq:ineqS}}{=} \lim_{r\to 0}\mathcal{H}^{n-1}_{5r}(S_{p,m}) = \mathcal{H}^{n-1}(S_{p,m}).
\]
The $\sigma$-subadditivity of $\mathcal{H}^{n-1}$, \cite[Theorem 2.1, Claim \# 2]{Evans2015}, and \eqref{e:setS} would then conclude the proof.\ We are only left to show \eqref{eq:ficlar}.\ To this aim, pick any $x \in E_{p,r}$.\ Then, monotonicity and convexity of $\sigma$ imply:
\[
\frac{1}{p} \le \fint_{B_r(x)}\sigma\left(\left|Du(y) - \fint_{B_r(x)}Du(z) dz\right|\right)dy \le \fint_{B_r(x)}\fint_{B_r(x)}\sigma\left(\left|Du(y) - Du(z)\right|\right)dydz.
\]
Hence there exists $c = c(n,p)>0$ such that for all $a \in B_{r}(x)$:
\[
c(n,p) \le \fint_{B_{2r}(a)}\fint_{B_{2r}(a)}\sigma\left(\left|Du(y) - Du(z)\right|\right)dydz.
\]
In the next lines, $c(n,p)$ may decrease, but for the sake of brevity we will not denote it differently.\ Integrating over every such $a$ and changing variables, we see that:
\begin{equation}\label{eq:r}
r^nc(n,p) \le \int_{B_r(x)}\fint_{B_{2r}}\fint_{B_{2r}}\sigma\left(\left|Du(y + a) - Du(z + a)\right|\right)dydzda
\end{equation}
As $r$ is fixed and $E_{p,r} \subset B_1$, we can cover $E_{p,r}$ with finitely many balls $\overline{B_{r}(x_i)}$ centered at $\{x_i\}_{i=1}^N \subset E_{p,r}$.\ From Vitali's Covering Theorem, \cite[Theorem 1.24]{Evans2015}, we find a subset of centers $x_{i_1},\dots, x_{i_M}$ such that
\begin{equation}\label{e:sp}
E_{p,r} \subset \bigcup_{i = 1}^N \overline{B_{r}(x_i)} \subset \bigcup_{j = 1}^M \overline{B_{5r}(x_{i_j})}\quad \text{ and } \quad B_{r}(x_{i_j}) \cap B_{r}(x_{i_k}) = \emptyset \quad \text{ if }j \neq k.
\end{equation}
Let $E \doteq \bigcup_{j = 1}^M B_r(x_{i_j})$.\ Summing \eqref{eq:r} over this subset of centers, we obtain
\begin{equation}\label{e:Mrc}
Mr^nc(n,p) \le \int_{E}\fint_{B_{2r}}\fint_{B_{2r}}\sigma\left(\left|Du(y + a) - Du(z + a)\right|\right)dydzda
\end{equation}
For any $y,z \in B_{2r}$, and since $10r \le 1$, we get
\[
\int_{E}\sigma\left(\left|Du(y + a) - Du(z + a)\right|\right)da \le \int_{B_{\frac{11}{10}}}\sigma\left(\left|Du(y + a) - Du(z + a)\right|\right)da.
\]
Changing variables and renaming $z- y \doteq h$, we finally obtain
\[
\int_{B_{\frac{11}{10}}}\sigma\left(\left|Du(y + a) - Du(z + a)\right|\right)da \le \int_{B_{\frac{3}{2}}}\sigma\left(\left|Du(a+h) - Du(a)\right|\right)da \overset{\eqref{e:Dh}}{\le} C|h|D(|h|) \le CrD(4r),
\]
for a dimensional constant $C$.\ Combining this inequality with \eqref{e:Mrc}, we infer that
\begin{equation}\label{e:rnp}
	Mr^{n - 1} \le C(n,p)D(4r).
\end{equation}
Therefore, from \eqref{e:sp} and \eqref{e:rnp} we infer
\[
\mathcal{H}^{n-1}_{5r}(E_{p,r}) \le \sum_{j=1}^M \diam(B_{5r}(x_{i_j}))^{n-1} = (10)^{n-1}Mr^{n-1} \overset{\eqref{e:rnp}}{\le} C(n,p)D(4r),
\]
for a possibly larger constant $C(n,p)$.\ This and \eqref{eq:Dh} show \eqref{eq:ficlar} and conclude the proof.
\end{proof}

\section{Proof of Theorem \ref{t:Su}}\label{sec:proofthm1}

Before showing the Theorem, let us introduce some useful notation.\ Given a map $w \in \Lip(B_1,\R^2)$ and any point $x_0 \in B_1$, we set:
\begin{equation}\label{e:resc}
	w_{r,x_0}(h) \doteq \frac{w(x_0 + rh)- w(x_0)}{r}.
\end{equation}
As $w$ is Lipschitz, this sequence of rescalings is precompact in $C^0(B)$, for any ball $B \subset \R^2$. Therefore, we can introduce the set $\mathcal{B}(w)(x_0)$ of blowups of $w$ at $x_0$ to be the collection of maps obtained as locally uniform limits of any subsequence extracted from $w_{r,x_0}$.\ Observe that, by Rademacher's theorem, $w$ is differentiable at a.e. $x_0 \in B_1$ and thus $\mathcal{B}(w)(x_0) = \{L_{Dw(x_0)}\}$ for any such $x_0$.\ Given now a solution $w \in \Lip(B_1,\R^2), B_1\subset \R^2$ to \eqref{e:di} with $K$ enjoying property \eqref{e:diffinc}, we further observe that:
	\begin{equation}\label{e:sconv}
		\text{ any map $w_\infty \in \mathcal{B}(w)(x_0)$ still solves \eqref{e:di}},
	\end{equation}
	due to the strong $W^{1,1}_{\loc}$ convergence of the gradients of rescalings provided by Theorem~\ref{t:comp}.\ Through blowups, we define the regular and the singular sets of $w$:
\begin{Def}\label{def:reg}
For $w \in \Lip(\Omega,\R^2)$, $\Omega \subset \R^2$ open, we let
		\[
		\Reg(w) \doteq \{x \in \Omega: \mathcal{B}(w)(x) \text{ contains a map which is differentiable at $0$}\}\; \text{ and } \; \Sing(w) \doteq \Omega\setminus \Reg(w).
		\]
	\end{Def}  
	
Combining now Theorems \ref{t:noninc}-\ref{p:sing}, we get the following precise version of Theorem \ref{t:Su}, restated in terms of differential inclusion thanks to Propositions \ref{p:11}-\ref{p:12}.

\begin{thm}\label{t:summa}
	Let $K \subset \R^{2\times 2}$ be compact and fulfill \eqref{e:diffinc} and let $w \in \Lip(B_1,\R^2)$, $B_1\subset \R^2$, solve \eqref{e:di}.\ Then, $w$ is differentiable over $\Reg(w)$, $Dw|_{\Reg(w)}$ is continuous, and $\mathcal{H}^1(\Sing(w)) = 0$.
\end{thm}
\begin{proof}
Let $x_0 \in \Reg(w)$.\ By definition, there exists $v \in \mathcal{B}(w)(x_0)$ which is differentiable at $0$.\ By \eqref{e:sconv}, $Dv \in K$ a.e. in $\R^2$.\ Since $v$ is differentiable at $0$, Theorem \ref{t:noninc} applied to the family $v_{r,0}$, compare \eqref{e:resc}, implies that $\diam([Dv](0)) = 0$.\ Now, as $v \in \mathcal{B}(w)(x_0)$, then $v$ is the local uniform limit in $\R^2$ of $(w_{r_j,x_0})_{j \in \N}$, and we can employ the same reasoning to deduce from Theorem \ref{t:noninc} that $\diam([Dw](x_0)) = 0$.\ Therefore, it consists of a single matrix, and differentiability and continuity at that point follow directly from the definition \eqref{e:point}. 
 To conclude the proof, 
it suffices to show that $\Sing(w) \subset S$, where $S$ is the
$\mathcal H^1$-negligible set provided by Theorem~\ref{p:sing}.
Here, Theorem~\ref{p:sing} is applied
to the first component $u$  of $w=(u,v)$,
which solves \eqref{e:G} for the strictly monotone field $G$ 
associated to $K$ by Proposition~\ref{p:11}.\ Equivalently, we show that $S^c \subset \Reg(w)$.\ 
Let then $x_0 \in S^c$.\ This means that there exists a sequence $r_n \to 0$ such that
\begin{equation}\label{e:Brn}
	\lim_{n\to +\infty} \fint_{B_{r_n}(x_0)}\sigma\left(\left|Du(y) - \fint_{B_{r_n}(x_0)} Du(z)dz\right|\right)dy = 0.
\end{equation}
We may assume, up to a non-relabeled subsequence, that $\fint_{B_{r_n}(x_0)} Du \to a \in \R^2.$ Up to considering another non-relabeled subsequence, we can assume that $w_{r_n,x_0}$, recall \eqref{e:resc}, converges locally uniformly and, thanks to Theorem~\ref{t:comp}, strongly in $W^{1,1}_{\loc}$ to $w_\infty = (u_\infty, v_\infty)$.\ Now the strong convergence and \eqref{e:Brn} imply that $Du_\infty \equiv a$ on $B_1$.\ From \eqref{e:diffinc}-\eqref{e:sigmaprop}-\eqref{e:sconv}, we  infer that $Dw_\infty \equiv A$ in $B_1$, where $A \in K$ is the only matrix in $K$ whose first row is $a$.\ Hence $w_\infty$ is differentiable at $0$, and $x_0 \in \Reg(w)$.
\end{proof}

\section{Monge-Ampère measure associated to solutions of elliptic PDEs}\label{sec:2bis}

For $u,v \in W^{1,2}(B_1)$, we define the distribution
\begin{equation}\label{e:vwj}
\mathcal{D}(u,v) = -\frac{1}{4}\dv(\dv (JDu\otimes JDv + JDv\otimes JDu)),
\end{equation}
where $J$ is defined in \eqref{e:J}.\ Recalling \eqref{eq:A}, if $u,v \in W^{2,2}$, a direct computation shows that $\mathcal{D}(u,v) = \frac{1}{2}\langle D^2u, \cof D^2v\rangle$, so that $\mathcal{D}(u,u) = \det(D^2u)$.\ In the literature, $\mathcal{D}(u,u)$ is usually called the \emph{very weak} Hessian Jacobian of $u$.\ The main result of this section is the following:

\begin{thm}\label{thm:u}
	Let $K \subset \R^{2\times 2}$ fulfill \eqref{e:diffinc}, $\Omega \subset \R^2$ be open, and let $w = (u,v) \in \Lip(\Omega,\R^2)$ be a solution to \eqref{e:di}.\ Define the symmetric matrix of distributions
	\begin{equation}\label{e:dw}
		\mathcal{D}(w)\doteq \left(\begin{array}{cc} \mathcal{D}(u,u) & \mathcal{D}(u,v)\\ \mathcal{D}(u,v) & \mathcal{D}(u,v)\end{array}\right).
	\end{equation}
	Then, for any $\varphi \in C^\infty_c(\Omega)$ with $\varphi \ge 0$ everywhere,
	\[
	\mathcal{D}(w)(\varphi) \le 0 \text{ in the sense of quadratic forms}.
	\]
	Thus, $\mathcal{D}(w)$ is a locally finite measure on $\Omega$ with values in the set of nonpositive semidefinite symmetric matrices, $\Sym^-(2)$.

\end{thm}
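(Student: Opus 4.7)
The plan is to reduce the matrix statement to a scalar distributional inequality, approximate $w$ by solutions of uniformly strongly elliptic problems with smooth nonlinearities, and exploit a pointwise Hessian--trace identity on the regularized solutions.

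\emph{Reduction to the diagonal.} By bilinearity and symmetry of $\mathcal{D}(\cdot,\cdot)$, the quadratic form acts as $\xi^\top\mathcal{D}(w)(\varphi)\xi=\mathcal{D}(u_\xi,u_\xi)(\varphi)$, where $u_\xi=\xi\cdot w$. Given $\xi\in\R^2\setminus\{0\}$, choose an invertible matrix $L\in\R^{2\times 2}$ with first row $\xi^\top$: the map $Lw$ satisfies $D(Lw)\in LK$ a.e., $LK$ still fulfills \eqref{e:diffinc} with $\sigma$ replaced by $\det L\,\sigma(\cdot/\|L\|)$, and $(Lw)_1=u_\xi$. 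It thus suffices to prove that $\mathcal{D}(u,u)\le 0$ as a distribution for any $(u,v)=w$ solving \eqref{e:di}.

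\emph{Strongly elliptic smooth approximation.} By Proposition~\ref{p:12} and Lemma~\ref{l:extension}, $K$ is the graph of a strict contraction $h\colon\C\to\C$ and $w$ solves $\partial_{\bar z}w=h(\partial_z w)$. Lemma~\ref{lem:app} provides $w^\eps\to w$ in $W^{1,p}_{\loc}$ (for every $p<\infty$) solving the uniformly strongly elliptic system $\partial_{\bar z}w^\eps=(1-\eps)h(\partial_z w^\eps)$ with $\Ree w^\eps=\Ree w$ on $\partial B_1$. A further mollification $h_\delta=h\ast\rho_\delta$ yields smooth strict contractions, and the corresponding solutions $w^{\eps,\delta}$ to $\partial_{\bar z}w^{\eps,\delta}=(1-\eps)h_\delta(\partial_z w^{\eps,\delta})$ with the same boundary data are $C^{2,\alpha}_{\loc}$ by classical two-dimensional uniform elliptic regularity, and converge to $w^\eps$ in $W^{1,p}_{\loc}$ as $\delta\to 0$ by a stability argument along the lines of Lemma~\ref{lem:app}.

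\emph{Pointwise inequality and passage to the limit.} Via Proposition~\ref{p:11}, $u^{\eps,\delta}=\Ree w^{\eps,\delta}$ is a classical solution of $\dv G^{\eps,\delta}(Du^{\eps,\delta})=0$ with $G^{\eps,\delta}\in C^\infty$ uniformly strongly monotone. Expanding the equation gives $\operatorname{tr}(A^{\eps,\delta}\,D^2 u^{\eps,\delta})=0$, where $A^{\eps,\delta}=DG^{\eps,\delta}(Du^{\eps,\delta})$ has uniformly positive definite symmetric part; a direct linear-algebra argument (a symmetric $2\times 2$ matrix whose Hilbert--Schmidt inner product with a positive definite one vanishes cannot be sign-definite) yields $\det(D^2 u^{\eps,\delta})\le 0$ pointwise, and hence
\[
\mathcal{D}(u^{\eps,\delta},u^{\eps,\delta})(\varphi)=\int_\Omega\varphi\,\det(D^2 u^{\eps,\delta})\,dx\le 0,\qquad\varphi\in C_c^\infty(\Omega),\ \varphi\ge 0.
\]
The bilinear form admits the representation $\mathcal{D}(u,v)(\varphi)=-\tfrac12\int\langle D^2\varphi\,JDu,JDv\rangle\,dx$, which is continuous under $W^{1,2}_{\loc}$ convergence of gradients. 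Letting first $\delta\to 0$ and then $\eps\to 0$ preserves the sign and yields $\mathcal{D}(u,u)(\varphi)\le 0$. Any nonpositive distribution is a locally finite Radon measure, hence $\xi^\top\mathcal{D}(w)\xi$ is such a measure for every $\xi$, and polarization of the off-diagonal entry identifies $\mathcal{D}(w)$ as a matrix-valued measure with values in $\Sym^{-}(2)$.

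\emph{Main obstacle.} The crux is organizing the two regularizations so that $w^{\eps,\delta}$ is smooth enough for the pointwise Hessian identity to hold classically while the convergence is strong enough in $W^{1,2}_{\loc}$ to transfer the sign from $\int\det(D^2 u^{\eps,\delta})\varphi\,dx$ to the distributional limit $\mathcal{D}(u,u)(\varphi)$. The uniform strong ellipticity created by replacing $h$ with $(1-\eps)h_\delta$ is precisely what provides both $C^{2,\alpha}$ interior regularity of $w^{\eps,\delta}$ and the compactness/uniqueness needed for the double-limit argument; abandoning either approximation would either break the pointwise Hessian identity or prevent strong gradient convergence.
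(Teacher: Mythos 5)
Your proposal is correct and follows essentially the same route as the paper: reduce to the diagonal entry by an invertible linear change in the target, approximate by smooth uniformly elliptic problems, deduce $\det(D^2u^{\eps})\le 0$ pointwise from the trace identity $\langle DG^\eps(Du^\eps),D^2u^\eps\rangle=0$, and pass to the limit using strong convergence of gradients. The only difference is that the paper obtains the smooth, uniformly monotone approximations in one step by citing \cite[Lemma~A.4]{Lacombe2024}, whereas you build them by hand via the Beltrami formulation (Lemma~\ref{lem:app} plus mollification of $h$); your construction works but leaves the $C^{2,\alpha}$ regularity of $w^{\eps,\delta}$ and the $\delta\to 0$ stability as asserted standard facts.
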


\begin{proof}
	We can assume $\Omega = B_1$.\ We only need to show that the distribution $\mathcal{D}(u,u)$ is a non-positive measure.\ Let us show how to conclude the proof assuming this claim. Consider $\tilde w = wA$, for $A \in \R^{2\times 2}$ with $\det(A) > 0$. Then, $D\tilde w \in \tilde K \doteq A^TK$, which fulfills \eqref{e:diffinc}, and by the claim applied to $\tilde w$ we deduce that:
	\[
	\mathcal{D}(au+bv,au +bv) = a^2\mathcal{D}(u,u) + 2ab\mathcal{D}(u,v) + b^2\mathcal{D}(v,v) \le 0, \quad \forall a,b \in \R,
	\]
	which would conclude the proof.\ Let us show the claim.\ By Proposition \ref{p:11} we have that $u$ is a Lipschitz solution to \eqref{e:G} with $G\in C^0(\R^2,\R^2)$ satisfying \eqref{e:monoG}. 
	By \cite[Lemma~A.4]{Lacombe2024}, $u$ is a strong $H^1$ limit of smooth functions $u_\e$ solving 
	\begin{equation}\label{e:ueps}
	\dv(G^\eps(Du^\eps)) = \langle DG^\eps(Du^\eps), D^2u^\eps\rangle = 0\,,
	\end{equation}
where $G^\e$ are smooth fields satisfying \eqref{e:monoG} with $\sigma(t)=c_\eps t^2$ for some $c_\eps >0$, hence $DG^\e +(DG^\e)^T \geq 2 c_\e$.
This and \eqref{e:ueps} imply that $\mathcal{D} (u^\eps,u^\eps) =\det(D^2u^\eps)\le 0$ in $\mathcal{D}'(B_1)$.\ By the strong $H^1$ convergence provided by \cite[Lemma~A.4]{Lacombe2024}, we deduce the same for $\mathcal{D}(u,u)$.
\end{proof}

\begin{cor}\label{cor:2}
Let $K \subset \R^{2\times 2}$ fulfill \eqref{e:diffinc}, and let $w = (u,v) \in \Lip(B_1,\R^2)$ be a solution to \eqref{e:di}.\ Consider the matrix-valued measure $\mathcal{D}(w)$ defined as in \eqref{e:dw}.\ Factorize it as $\mathcal{D}(w) = P \mu$, where $\mu$ is a finite, positive measure on $B_1$, and $P \in L^\infty(B_1,\Sym^-(2);\mu)$.\ Then, for any $w_\infty \in \mathcal{B}(w)(x_0)$, recall \eqref{e:sconv},
\begin{equation}\label{e:Pmu}
\mathcal{D}(w_\infty) = P(x_0)\mu(\{x_0\})\delta_0,
\end{equation}
i.e. $\mathcal{D}(w_\infty) = P(x_0)\mu(\{x_0\})$ if $\mu(\{x_0\}) \neq 0$ and $\mathcal{D}(w_\infty) = 0$ if $\mu(\{x_0\}) = 0$.
\end{cor}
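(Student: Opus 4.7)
The plan rests on two ingredients: a scaling identity for the distribution $\mathcal D$ under the rescalings $w_{r,x_0}(h)=r^{-1}(w(x_0+rh)-w(x_0))$, and the strong $W^{1,p}_{\loc}$ convergence of gradients provided by Theorem~\ref{t:comp}. The main technical step is the first one, which requires some care since $\mathcal D(u,v)$ is only defined as a distribution when $u,v\in W^{1,2}$.

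\textbf{Scaling identity.} For any $u,v\in W^{1,2}(B_1)$, $x_0\in B_1$, and $r>0$, the first task is to verify that
\[
\langle\mathcal D(u_{r,x_0},v_{r,x_0}),\phi\rangle = \langle\mathcal D(u,v),\tilde\phi\rangle,\qquad \tilde\phi(x)\doteq\phi\!\left(\frac{x-x_0}{r}\right),
\]
for every $\phi\in C_c^\infty(\R^2)$. This is a direct change-of-variables computation starting from the definition \eqref{e:vwj}, using $Du_{r,x_0}(h)=Du(x_0+rh)$, the substitution $x=x_0+rh$, and the chain-rule identity $(\partial_i\partial_j\phi)((x-x_0)/r)=r^2\partial_i\partial_j\tilde\phi(x)$. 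Applied entrywise to the matrix-valued distribution $\mathcal D(w)$, this says exactly that $\mathcal D(w_{r,x_0})$ is the pushforward of $\mathcal D(w)$ under $\pi_{r,x_0}(x)=(x-x_0)/r$.

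\textbf{Atomic concentration of the pushforward.} Using the factorization $\mathcal D(w)=P\mu$, for any $\phi\in C_c(\R^2)$ with $\supp\phi\subset B_R$ and any fixed matrix $A\in\R^{2\times 2}$, the scaling identity yields
\[
\int_{\R^2}\phi(y)\,A:d(\pi_{r,x_0})_\#\mathcal D(w)(y)=\int_{B_{rR}(x_0)}\phi\!\left(\frac{x-x_0}{r}\right)A:P(x)\,d\mu(x).
\]
The integrand is uniformly bounded by $\|\phi\|_\infty\|P\|_{L^\infty(\mu)}|A|$, is supported in the shrinking ball $B_{rR}(x_0)$, and converges pointwise to $\phi(0)A:P(x_0)\mathbf 1_{\{x_0\}}(x)$ as $r\to 0$ (with the convention that $P(x_0)\mu(\{x_0\}):=0$ when $\mu(\{x_0\})=0$). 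Dominated convergence then gives
\[
(\pi_{r,x_0})_\#\mathcal D(w)\rightharpoonup P(x_0)\mu(\{x_0\})\delta_0\quad\text{as matrix-valued measures on }\R^2.
\]

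\textbf{Passage to the limit.} Finally, choose a sequence $r_j\to 0$ such that $w_{r_j,x_0}\to w_\infty$ locally uniformly. Applying Theorem~\ref{t:comp} to this equi-Lipschitz sequence, which solves the same differential inclusion on growing balls, yields $Dw_{r_j,x_0}\to Dw_\infty$ in $L^1_{\loc}$, and hence in every $L^p_{\loc}$ with $p<\infty$ thanks to the uniform Lipschitz bound. Therefore $JDw_{r_j,x_0}\otimes JDw_{r_j,x_0}\to JDw_\infty\otimes JDw_\infty$ in $L^1_{\loc}$, and taking two distributional derivatives componentwise yields $\mathcal D(w_{r_j,x_0})\to\mathcal D(w_\infty)$ in $\mathcal D'(\R^2)$. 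Combined with the scaling identity and the previous concentration step, this identifies the two limits and proves \eqref{e:Pmu}. The delicate point throughout is keeping track of matrix-valued measures and justifying the scaling identity in the $W^{1,2}$ setting; once this is in place, the remainder is standard dominated convergence and distributional continuity.
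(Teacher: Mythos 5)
Your proposal is correct and follows essentially the same route as the paper's proof: the change-of-variables/scaling identity $\mathcal D(w_{r,x_0})(\varphi)=\mathcal D(w)(\varphi(\tfrac{\cdot-x_0}{r}))$, dominated convergence applied to $\int \varphi(\tfrac{x-x_0}{r})P\,d\mu$ to extract the atom $P(x_0)\mu(\{x_0\})\varphi(0)$, and the strong $W^{1,p}_{\loc}$ convergence of gradients from Theorem~\ref{t:comp} to identify the limit with $\mathcal D(w_\infty)(\varphi)$. Your write-up is in fact slightly more detailed than the paper's on the scaling identity (which the paper states without computation), but no new ideas are involved.
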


\begin{proof}
Recalling \eqref{e:resc}, by definition $w_\infty = \lim_n w_{x_0,r_n}$, for a sequence $r_n \to 0$, where the convergence is locally uniform and in $W^{1,p}_{\loc}$ for all $p \in [1,\infty)$ in $\R^2$, see Theorem~\ref{t:comp}.\ Take any $\varphi \in C^\infty_c(\R^2)$ and set $\varphi_n(x) \doteq \varphi\left(\frac{x-x_0}{r_n}\right).$ Notice that $\varphi_n$ converges pointwise to $0$ in $\{x_0\}^c$ and to $\varphi(0)$ at $x_0$. On one hand: 
\begin{equation}\label{e:fico}
\lim_n\mathcal{D}(w)(\varphi_n) = \mathcal{D}(w_\infty)(\varphi),
\end{equation}
because of $\mathcal{D}(w)(\varphi_n) = \mathcal{D}(w_{x_0,r_n})(\varphi)$ and the strong $W^{1,2}$ convergence of $w_{x_0,r_n}$ to $w_\infty$.\ On the other hand:
\[
\mathcal{D}(w)(\varphi_n) = \int_{B_1}\varphi_n(x) P(x) d\mu(x),
\]
so that dominated convergence and \eqref{e:fico} conclude the proof.
\end{proof}

Let us end this section with a few general comments.\ Firstly, similar properties hold for infinity harmonic functions \cite[Theorem~1.5]{Koch2019}.\ Secondly, these results suggest the following conjectural analysis of blowups: one could hope to deduce from Corollary \ref{cor:2} that $w_\infty \in \mathcal{B}(w)(x_0)$ is linear at points where $\mu(\{x_0\}) = 0$ and that $w_\infty$ is one-homogeneous at points where $\mu(\{x_0\}) \neq 0$.\ This would tell us that $\Sing(w)$ is actually countable, since a finite measure can have at most countably many atoms.\ The reason for this hope is that the equation $\det(D^2u) = 0$ in an open set $\Omega$ implies a very rigid structure of $Du$: essentially, that $Du$ is constant along segments connecting the boundary.\ This, in turn, yields that $Du$ is constant if $u$ is defined in $\R^2$.\ Such rigidity property has been shown assuming $u \in C^2$ \cite{Hartman1959}, $u \in W^{2,\infty}$ \cite[\textsection 2.6]{Kirchheim2003}, $u \in W^{2,2}$ \cite{Pakzad2004} and $u$ in the class \emph{MA} \cite{Jerrard2010}.\ 
In all of these cases, $Du$ is (at least weakly) differentiable.\ Here, however, the only way we can interpret the distribution $\det(D^2u)$ is in the \emph{very weak sense}, see the distribution $\mathcal{D}(u,u)$ in \eqref{e:vwj}.\ The equation $\mathcal{D}(u,u) = 0$ is extremely flexible, even if $u \in C^{1,\alpha}$ for $\alpha > 0$, see \cite{Lewicka2017,Cao2019,Cao2025}.\ In a sense, a function $u$ which satisfies  $\mathcal{D}(u,u) = 0$
and also \eqref{e:G} for a strictly monotone $G$, lies in a realm between the known rigidity and flexibility statements.\ The fact that some rigidity can be expected can be seen in Proposition \ref{p:blowups_hypK*}, where Corollary \ref{cor:2} will be instrumental to exclude some types of blowups.\

\section{Regularity of inclusions into curves}\label{sec:curves}
In this section we study inclusions into elliptic curves.\ Namely, we assume that $K=\Gamma\subset\R^{2\times 2}$ satisfies \eqref{e:diffinc} and is a $C^1$ curve, that is, a connected, compact $C^1$ submanifold of $\R^{2\times 2}$ of dimension one, which may have a nonempty boundary.\ Hence $\Gamma=\gamma(I)$ for some $I=[a,b]$ or $\R/L\Z$ and $\gamma\in C^1(I,\R^{2\times 2})$ 
is a homeomorphism onto $\Gamma$ with $|\gamma'|>0$ on $I$.\ As some results depend on the global geometry of the domain, it is convenient to consider a general open, connected, simply connected set $\Omega$ in \eqref{e:GAMMA}.\ As previously mentioned, the next Theorem generalizes the main result of \cite{LLP25}, where the same conclusion was obtained under the assumption that \eqref{e:diffinc} is satisfied with $\sigma(t)=ct^4$ for some $c>0$.
		
	\begin{thm}\label{tint:3}
		Let $\Omega \subset \R^2$ be open.\ If $w \in \Lip(\Omega,\R^2)$ satisfies 
		\begin{equation}\label{e:GAMMA}
			Dw \in \Gamma \text{ a.e. in $\Omega$},
		\end{equation}
		then $\Sing(w)$ is locally finite.\ If $\Omega$ is convex, then $\Sing(w)$ contains at most two points.\ 
		Moreover, $\Sing(w)$ is empty 
		if one of the following sufficient conditions is satisfied.
		\begin{itemize}
			\item $\Gamma$ is simply connected, that is, $I=[a,b]$;
			\item $\Gamma$ is not fully degenerate, that is, it admits a tangent line generated by a rank-2 matrix;
			\item there exists $a\in\mathbb S^1$ such that the projection  $a\Gamma\subset\R^2$ is the boundary of a strictly convex open set. 
		\end{itemize}
	
	\end{thm}
	
	In fact, under the last assumption, the conclusion holds even if $\Gamma$ is not assumed $C^1$, see Corollary~\ref{c:visc}.\ Aside from its independent interest, this result will be crucial for the proof of Theorem \ref{tint:4}.\ As the proof is quite long, it is useful to split it into steps:

\begin{itemize}
	\item First, in \textsection \ref{sub:step1}, we show that, for any $a \in\R^2$, $ \varphi \doteq (a,w)$ is a viscosity solution in the sense of \cite{Crandall1983} to two Hamilton-Jacobi equations simultaneously: $\pm f(D\varphi) = 0$, 
where $f$ depends only on $\Gamma$ and $a$; 
	\item next, in \textsection \ref{sub:step2} we will reduce the proof to the case where $\rank(\gamma'(t)) = 1$, for all $t \in I$;
	\item \textsection \ref{sub:step3} considers a special case, the one where $\Gamma$ is \emph{graphical}, see the special form \eqref{eq:gamma_graph}.\ The main result of that subsection is that, for such curves, solutions to \eqref{e:GAMMA} are everywhere $C^1$; 
	\item In \textsection \ref{sub:step4}, we deduce as a consequence the partial regularity result for general curves $\Gamma$;
	\item The fifth and sixth step, contained in \textsection \ref{sub:step5}-\ref{sub:step6}, are devoted to show that $w$ is a  \emph{zero-entropy solution}  to $Dw \in \Gamma$ and to deduce from this the structure of the singular set, respectively.
\end{itemize}
Theorem~\ref{tint:3} is then a consequence of Corollary~\ref{c:visc}, Lemma~\ref{l:reduc_degen}, Propositions~\ref{p:reg_open_curve}-\ref{p:reg_closed_curve} and Lemma~\ref{l:struct_isolated_sing}.

\subsection{Step 1: Viscosity properties}\label{sub:step1}

We wish to establish the following viscosity-type property.

\begin{prop}\label{p:visc}
Assume $K\subset\R^{2\times 2}$ is compact and  satisfies \eqref{e:diffinc}.
Let $w\in \Lip(\Omega,\R^2)$ solve $D w\in K$ a.e. and define, for any $a \in \R^2$, $\varphi \doteq w\cdot a$.\ Then, $\varphi$ is a strong viscosity solution of the differential inclusion $D\varphi\in a K$, i.e. if $\zeta\in C^1( \Omega)$ is such that $\varphi-\zeta$ has an extremum at $x_0\in\Omega$, then $D\zeta(x_0)\in a K \subset \R^2$.
\end{prop}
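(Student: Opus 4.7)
The plan is to prove the statement by contradiction, focusing on the maximum case; the minimum case then reduces to it by replacing $(a,\zeta)$ with $(-a,-\zeta)$. Suppose therefore that $\varphi-\zeta$ has a local max at $x_0\in\Omega$ with $c:=D\zeta(x_0)\notin aK$. After translation assume $x_0=0$ and $w(0)=0$; compactness of $aK$ yields $\delta:=\dist(c,aK)>0$. The core strategy is to construct a linear competitor $Lx$, exhibit $\tilde w:=w-L\,\cdot$ as a quasiregular map whose Jacobian is uniformly positive, apply quantitative openness, and then derive a contradiction from the fact that the maximum condition confines $\tilde w(B_r(0))$ inside a half-plane at distance $o(r)$ from $0$.

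For the competitor, I invoke Proposition \ref{p:12} to rewrite the inclusion as a nonlinear Beltrami system $\partial_{\bar z}w=h(\partial_z w)$ with $K$ realized as the graph of a strictly contractive $h$, and extend $h$ by Lemma \ref{l:extension} to a bounded strictly contractive $H\colon\C\to\C$, so that $w$ still solves $\partial_{\bar z}w=H(\partial_z w)$. I then seek $L\in\R^{2\times 2}$ on the graph of $H$, i.e.\ satisfying $[L]_{\overline{\mathcal H}}=H([L]_{\mathcal H})$, so that $x\mapsto Lx$ is a linear solution of the extended Beltrami equation, with the additional constraint $aL=c$. Parameterizing by the conformal part $\alpha:=[L]_{\mathcal H}\in\C$, the resulting map $F\colon\C\to\R^2$, $F(\alpha):=aL(\alpha,H(\alpha))$, is a bounded perturbation of the linear isomorphism $\alpha\mapsto aL(\alpha,0)$ (whose matrix has determinant $-|a|^2\ne 0$). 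Hence $F$ is proper, has nonzero degree, and is therefore surjective, providing the required $L$.

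Setting $\tilde w(x):=w(x)-Lx$, Remark \ref{r:diff_quasi} ensures that $\tilde w$ is quasiregular as a difference of two solutions to the extended Beltrami equation. Crucially, the hypothesis $c\notin aK$ gives $|a(Dw-L)|=|aDw-c|\ge \delta$ a.e., whence $|D\tilde w|\ge\delta/|a|$ and, using strict contractivity of $H$, a uniform lower bound $\det(D\tilde w)\ge\e_1>0$ a.e. Applying Corollary \ref{c:stab} to $\tilde w$ with the singleton $\Gamma=\{0\}$ yields $r_0,\delta_1>0$ such that $\deg(\tilde w,B_r(0),y)$ is well-defined for all $r<r_0$ and $y\in\overline{B_{\delta_1 r}}$. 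Since $\det(D\tilde w)>0$ a.e., \eqref{e:degN} together with $\tilde w(0)=0$ gives $\deg(\tilde w,B_r(0),0)\ge 1$, and degree constancy across the connected set $B_{\delta_1 r}(0)$ then delivers the quantitative openness $B_{\delta_1 r}(0)\subset\tilde w(B_r(0))$.

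To conclude, the maximum condition combined with $\zeta(x)=c\cdot x+o(|x|)$ and $\varphi(0)=0$ gives
\begin{equation*}
a\tilde w(x)=\varphi(x)-c\cdot x\le \zeta(x)-c\cdot x=o(|x|)\,,
\end{equation*}
so that $\tilde w(B_r(0))\subset\{y\in\R^2\colon a\cdot y\le\e_r r\}$ with $\e_r\to 0$. Together with $B_{\delta_1 r}(0)\subset\tilde w(B_r(0))$, this forces $|a|\delta_1 r\le\e_r r$, i.e.\ $|a|\delta_1\le\e_r$, which fails for $r$ sufficiently small. The main obstacle is the construction of $L$: since $c\notin aK$, no $L\in K$ satisfies $aL=c$, so it is essential to pass to the bounded extension $H$ and use a degree-theoretic surjectivity argument, while simultaneously verifying that the quasiregularity constants of $\tilde w$ stay uniform so that Corollary \ref{c:stab} applies cleanly.
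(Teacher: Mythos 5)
Your proof is correct, but it takes a genuinely different route from the paper's. The paper follows the averaging argument of \cite{DLOW04}: it reduces to $a=e_1$ and a strict minimum, pushes the normalized Lebesgue measure on the sublevel sets $\Omega_\delta=\lbrace u-\zeta<\delta\rbrace$ forward through $Dw$ to probability measures $\mu_\delta$ on $K$, and uses the quantitative Jensen-type inequality of Lemma~\ref{l:avg} for the null Lagrangian $\det$ together with the divergence theorem on $\Omega_\delta$ to force the variance of $Du$ under these measures to vanish, whence $\int D\zeta\,d\nu_\delta$ approaches $K_1=\pi_1(K)$ and $D\zeta(x_0)\in K_1$. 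You instead argue by contradiction with the separation machinery of Section~3: Lemma~\ref{l:extension} produces a linear solution $L$ of the extended Beltrami system with $aL=D\zeta(x_0)$ (your surjectivity argument for $\alpha\mapsto aL(\alpha,H(\alpha))$ is sound, since its linear part has determinant $-|a|^2$ and $H$ is bounded and contractive); the hypothesis $D\zeta(x_0)\notin aK$ yields $|aD\tilde w|\geq\dist(D\zeta(x_0),aK)>0$ and hence, via \eqref{alg} and strict contractivity of $H$, $\det(D\tilde w)\geq\e_1>0$ a.e.; Corollary~\ref{c:stab} plus \eqref{e:degN} then give the quantitative openness $B_{\delta_1 r}\subset\tilde w(B_r)$, which is incompatible with the extremum condition trapping $\tilde w(B_r)$ in a half-plane of width $o(r)$. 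The paper's route is softer (only the divergence theorem and weak-$*$ compactness of measures on $K$), while yours recycles the quasiregular/degree toolkit already built for Theorem~\ref{tint:1} and is more quantitative. Two small points to tidy up: \eqref{e:degN} holds only for a.e.\ $y$, so to conclude $\deg(\tilde w,B_r,\cdot)\geq 1$ on $\overline{B_{\delta_1 r}}$ you should use openness of the nonconstant quasiregular map $\tilde w$ to find a.e.\ points $y$ near $0=\tilde w(0)$ with $N(\tilde w,B_r,y)\geq1$, and then invoke constancy of the degree on the connected set $\overline{B_{\delta_1 r}}$, which avoids $\tilde w(\partial B_r)$ by Corollary~\ref{c:stab}; and the degenerate case $a=0$ should be dispatched separately (there $\varphi\equiv0$ forces $D\zeta(x_0)=0\in aK$).
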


Let us first recall the following properties of probability measures on $K$.

\begin{lem}
\label{l:avg}
If $K\subset\R^{2\times 2}$ satisfies \eqref{e:diffinc}, then for any probability measure $\mu$ supported in $K$  we have
\begin{align}\label{eq:Aomega}
\mathcal A(\mu) \doteq \int \det (X)\, d\mu(X) -\det\Big( \int X d\mu(X)  \Big) \geq \omega\Big( \int \Big|X-\int Y d\mu(Y) \Big|^2\, d\mu(X) \Big)\,,
\end{align}
for some nondecreasing function $\omega\colon [0,\infty)\to [0,\infty)$ 
 such that $\omega(0)=0$ and $\omega(t)>0$ for all $t>0$.
\end{lem}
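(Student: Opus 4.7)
The starting observation is that $\det\colon\R^{2\times 2}\to\R$ is a quadratic form, so the quantity $\mathcal A(\mu)$ is the Jensen gap of $\det$ against $\mu$. Polarizing $\det$ into a symmetric bilinear form $B$ with $B(X,X)=\det X$, one gets the double-integral identity
\begin{align*}
\mathcal A(\mu) \;=\; \frac12\int_K\int_K \det(X-Y)\,d\mu(X)\,d\mu(Y),
\end{align*}
because the cross term $\int B(X,Y)\,d\mu(X)\,d\mu(Y)$ collapses to $B(\bar X,\bar X)=\det(\bar X)$, where $\bar X \doteq \int Y\,d\mu(Y)$. Applying the analogous polarization to the quadratic form $X\mapsto|X|^2$, the argument of $\omega$ in \eqref{eq:Aomega} rewrites as
\begin{align*}
\int_K |X-\bar X|^2\,d\mu(X) \;=\; \frac12\int_K\int_K |X-Y|^2\,d\mu(X)\,d\mu(Y).
\end{align*}
Together with the pointwise bound $\det(X-Y)\geq \sigma(|X-Y|)\geq 0$ from \eqref{e:diffinc}, this already yields $\mathcal A(\mu)\geq 0$ and reduces the lemma to producing $\omega$ with the stated properties.

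I would produce $\omega$ variationally, setting
\begin{align*}
\omega(t)\doteq \inf\Big\{\mathcal A(\mu)\colon \mu\in\mathcal P(K),\ \int |X-\bar X|^2\,d\mu(X) \geq t\Big\},
\end{align*}
interpreting the infimum over an empty set as $+\infty$ and then truncating at a large finite constant so that $\omega$ is valued in $[0,\infty)$. This $\omega$ is non-decreasing by construction, and $\omega(0)=0$ since Dirac masses are admissible.

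The only substantive point is $\omega(t)>0$ for every $t>0$. Since $K$ is compact, $\mathcal P(K)$ is weakly-$*$ compact and the functionals $\mu\mapsto \mathcal A(\mu)$ and $\mu\mapsto \int |X-\bar X|^2\,d\mu$ are weakly-$*$ continuous (all integrands are continuous and bounded on $K$), so the infimum is attained by some $\mu_t\in\mathcal P(K)$ with $\int|X-\bar X|^2\,d\mu_t \geq t>0$. If $\mathcal A(\mu_t)=0$, then the double-integral representation of $\mathcal A$ combined with $\det(X-Y)\geq \sigma(|X-Y|)\geq 0$ forces $\sigma(|X-Y|)=0$ for $\mu_t\otimes\mu_t$-a.e.\ pair $(X,Y)$; strict monotonicity of $\sigma$ at $0$ from \eqref{e:sigmaprop} then forces $X=Y$ $\mu_t\otimes\mu_t$-a.e., i.e.\ $\mu_t$ is a Dirac mass, contradicting $\int |X-\bar X|^2\,d\mu_t\geq t>0$. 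I expect no real obstacle beyond routine verification of these continuity properties and the collapse to a Dirac mass (e.g.\ via Fubini and $\sigma^{-1}(0)=\{0\}$).
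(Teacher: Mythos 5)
Your proposal is correct and follows essentially the same route as the paper: the same double-integral identity $\mathcal A(\mu)=\frac12\iint\det(X-Y)\,d\mu\,d\mu$, the same variational definition of $\omega$ as an infimum over the weak-$*$ compact constraint set, and the same rigidity argument (vanishing of $\mathcal A$ forces $\mu$ to be a Dirac mass via \eqref{e:diffinc}). The only cosmetic difference is that you make explicit the truncation handling an empty constraint set, which the paper leaves implicit.
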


\begin{proof}
This proof is essentially contained in \cite[Lemma~3]{sverak93}.\ Indeed we have:
\begin{align*}
\int\!\!\! \int \det(X-Y)\,d\mu(X)d\mu(Y)
&
=\int\!\!\!\int 
\Big(
\det(X) +\det(Y)-\langle X,\cof(Y)\rangle
\Big)\, 
d\mu(X) d\mu(Y)
\\
&
=2\int \det(X)\, d\mu(X) - 
\left\langle\int X\, d\mu(X),\cof\left(\int Y\,d\mu(Y)\right)\right\rangle = 2\mathcal{A}(\mu).
\end{align*}
Thus $\mathcal A(\mu) \ge 0$, with equality if and only if $\mu\otimes\mu$ is supported on the diagonal, which is equivalent to $\mu$ being a Dirac mass.
Then one can take 
$\omega(t)$ to be the minimum of the
 weak-$*$ continuous function $\mathcal A$ over the weak-$*$ compact set of probability measures $\mu$ on $K$ such that 
 \[
 \int |X- \int Y d\mu(Y)|^2\, d\mu(X) \geq t.\tag*{\qedhere} 
 \]
\end{proof}

\begin{proof}[Proof of Proposition~\ref{p:visc}]
If $a = 0$ there is nothing to show. If $a \neq 0$, then we can write $a = e_1 Q$, for some $Q\in \R^{2\times 2}$ with $\det(Q)>0$.\ The map $\tilde w =wQ $ satisfies $D\tilde w\in \tilde K =Q^TK$, and $\tilde K$ still satisfies \eqref{e:diffinc}, so it suffices to prove Proposition~\ref{p:visc} for $a = e_1$.\ Letting, as usual $w = (u,v)$, we then wish to show the property for $\varphi = u$. Let us notice that, applying the previous observation with $ a = \pm e_1$, it suffices to consider the case where the extremum of $u-\zeta$ is a minimum.\ As a further simplification we can, without loss of generality, assume that such minimum point is at $x_0 = 0$, that $u(0) = \zeta(0)$, and that the minimum is strict. Indeed, if the latter is not satisfied, then we can simply replace $\zeta$ by $\zeta_\e(x)=\zeta(x)+\e|x|^2$ for $0<\e\ll 1$.

\medskip

Thanks to the previous reductions, we now fix $\zeta\in C^1(B_1)$ such that $u-\zeta$ has a strict minimum at $0$, and we let $K_1 = \pi_1(K)\subset\R^2$, as in Proposition \ref{p:11}.\ To show $D\zeta(x_0)\in K_1$, we follow the strategy of \cite[\textsection{4.2}]{DLOW04}. 
For small $\delta>0$, we consider the open set $\Omega_\delta\subset\subset \Omega$ given by the connected component of $\lbrace  u-\zeta < \delta\rbrace$ which contains $0$.
Denote by $\nu_\delta$ and $\mu_\delta$ the following probability measures: 
\begin{align*}
\nu_\delta =\frac{1}{|\Omega_\delta|}\mathbf 1_{\Omega_\delta} \, dx\, \text{ and } \mu_\delta = (D w)_\sharp  \nu_\delta\,.
\end{align*}
Notice that $\nu_\delta$ is a probability measure on $\Omega$, while $\mu_\delta$, its pushforward through $Dw$, is a probability measure on $K$.\ Since $u-\zeta-\delta=0$ on $\partial\Omega_\delta$,
the divergence theorem and $\curl( D v)=0$ imply, respectively:
\begin{equation}\label{e:bary}
\int \partial_1 (u-\zeta -\delta) \, d\nu_\delta = \int \partial_2(u-\zeta-\delta)\, d\nu_\delta =0\, \text{ and } \int \partial_2 v \partial_1(u-\zeta-\delta)\, d\nu_\delta 
=\int \partial_1 v \partial_2(u-\zeta-\delta)\, d\nu_\delta\,.
\end{equation}
From these two identities we readily infer that:
\begin{align*}
\mathcal A(\mu_\delta) \overset{\eqref{eq:Aomega}}{=} \int
 \det( D w)
\,d\nu_\delta
-\det\Big(\int  D w \,d\nu_\delta\Big)
=
\int \partial_2v
\Big(\partial_1\zeta
-\int\partial_1\zeta \,d\nu_\delta\Big)
\, -
\partial_1v
\Big(\partial_2\zeta
-\int\partial_2\zeta\,d\nu_\delta\Big)
\,d\nu_\delta\,.
\end{align*}
Since $\zeta \in C^1$ and $\mathrm{diam}(\Omega_\delta)\to 0$ as $\delta\to 0$, we deduce $\mathcal A(\mu_\delta)\to 0$ as $\delta\to 0$.\ Notice that the properties of $\omega$ in \eqref{eq:Aomega} ensure that if $\omega(t_k)\to 0$, then $t_k\to 0\,$, for any sequence $t_k \ge 0$.\ Thus, $\mathcal A(\mu_\delta)\to 0$ and \eqref{eq:Aomega} imply:
\begin{align*}
&
\int \Big| D u(x)-\int  D u(y)\, d\nu_\delta(y) \Big|^2\, d\nu_\delta(x)
\longrightarrow 0
\quad\text{as }\delta\to 0\,.
\end{align*}
Since $ D u\in K_1$ a.e. and $\int D \zeta\,d \nu_\delta =\int  D u\, d\nu_\delta$ by the first equation of \eqref{e:bary}, we infer $$\dist\left(\int  D \zeta\, d\nu_\delta,K_1\right)\to 0\,.$$
As $K_1$ is closed, $ D\zeta \in C^0(\Omega,\R^2)$, 
$\lim_{\delta \to 0}\diam(\Omega_\delta) = 0$ and $0 \in \cap_{\delta > 0}\Omega_\delta$,
we conclude that $ D\zeta(0)\in K_1$.
\end{proof}

It is important to note that the viscosity property established in Proposition~\ref{p:visc} is very strong:
for any continuous function $f\colon\R^2\to \R$ that vanishes on $aK$,
the function $\varphi=w\cdot a$ is a viscosity solution of $f(D\varphi)=0$, but also of $-f(D\varphi)=0$.
If $f$ is strictly convex, this already implies $C^1$ regularity.

\begin{cor}\label{c:visc}
Let $K\subset\R^{2\times 2}$ be a compact set fulfilling \eqref{e:diffinc}.
Assume that there exists $a\in\mathbb S^1$ such that $aK\subset\R^2$ is contained in the boundary of a strictly convex open set.
Then any $w \in \Lip(B_1,\R^2)$ solving \eqref{e:di} is $C^1$ in $B_1$.
\end{cor}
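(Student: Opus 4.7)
The plan is to exploit the strong viscosity property furnished by Proposition~\ref{p:visc}. Set $\varphi := w \cdot a$; by that proposition, at every $x \in B_1$ the Fr\'echet super- and sub-differentials satisfy $\partial^+\varphi(x), \partial^-\varphi(x) \subseteq aK \subseteq \partial C$. Since $C$ is strictly convex, $\partial C$ contains no nontrivial segment, so any convex subset of $\partial C$ is a singleton or empty; applied to the closed convex sets $\partial^\pm\varphi(x)$, each is either a single point or empty.

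The crux is then to show that these sets are in fact nonempty at every $x$, which will yield differentiability of $\varphi$ everywhere. The approach is to pick a $C^2$ uniformly convex $f \colon \R^2 \to \R$ with $\{f \le 0\} = \overline{C}$; such an $f$ exists because $\overline{C}$ is strictly convex, possibly after a mild smoothing of $\partial C$ if the latter is not sufficiently regular. The strong viscosity property then makes $\varphi$ a bilateral viscosity solution of $f(D\varphi) = 0$ and, symmetrically, of $-f(D\varphi) = 0$. Standard regularity theory for convex Hamilton--Jacobi equations (see \cite{CS04}) gives that $\varphi$ is locally semiconcave in $B_1$, and the analogous statement applied to the concave Hamiltonian $-f$ gives that $\varphi$ is also locally semiconvex. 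Semiconcavity forces $\partial^+\varphi(x) \ne \emptyset$, and semiconvexity forces $\partial^-\varphi(x) \ne \emptyset$, at every $x$; by the previous paragraph both reduce to singletons. The standard Lipschitz-calculus argument (comparing the two one-sided expansions $\varphi(x+h) \le \varphi(x) + p\cdot h + o(|h|)$ and $\varphi(x+h) \ge \varphi(x) + q \cdot h + o(|h|)$ with $p \in \partial^+\varphi(x)$ and $q \in \partial^-\varphi(x)$) then forces these singletons to coincide and equal $D\varphi(x)$, so $\varphi$ is differentiable at every point; semiconcavity plus semiconvexity upgrades this to $\varphi \in C^{1,1}_{\mathrm{loc}}(B_1) \subseteq C^1(B_1)$.

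To promote this to $C^1$-regularity of $w$ itself, I consider the row-projection map $\pi_a \colon K \to aK$, $X \mapsto aX$. For distinct $X, Y \in K$, \eqref{e:diffinc} gives $\det(X-Y) \ge \sigma(|X-Y|) > 0$, so $X-Y$ is invertible and the nonzero row vector $a \in \mathbb S^1$ cannot satisfy $a(X-Y) = 0$; thus $\pi_a(X) \ne \pi_a(Y)$, and $\pi_a$ is injective. Being a continuous bijection between compact sets, $\pi_a$ is a homeomorphism with continuous inverse $\Psi \colon aK \to K$. Since $D\varphi = a\,Dw \in aK$ almost everywhere and $D\varphi$ is continuous with values in the closed set $aK$, this inclusion extends to every $x \in B_1$, and $Dw = \Psi \circ D\varphi$ is continuous, proving $w \in C^1(B_1,\R^2)$. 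The main technical obstacle is the middle semiconcavity/semiconvexity step, in particular ensuring the existence of a sufficiently regular uniformly convex $f$ adapted to a possibly non-smooth $\partial C$; everything else is either directly built on Proposition~\ref{p:visc} or elementary linear algebra via \eqref{e:diffinc}.
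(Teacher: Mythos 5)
Your overall strategy is the one the paper uses: feed Proposition~\ref{p:visc} into the theory of convex Hamilton--Jacobi equations to get that $\varphi=w\cdot a$ is simultaneously semiconcave and semiconvex, conclude $\varphi\in C^1$, and then recover $Dw$ from $D\varphi$ via the injectivity of the row projection $X\mapsto aX$ on $K$ (which \eqref{e:diffinc} guarantees, exactly as you argue; the paper routes this through Proposition~\ref{p:11} after reducing to $a=e_1$). Your opening observation that $\partial^\pm\varphi(x)$ are convex subsets of $\partial C$ and hence singletons is correct but ultimately redundant: once you have semiconcavity and semiconvexity, \cite[Theorem~3.3.7]{CS04} already gives $C^1$.

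The genuine gap is in your construction of the Hamiltonian. You ask for a $C^2$ \emph{uniformly} convex $f$ with $\{f\le 0\}=\overline C$. Such an $f$ has nonvanishing gradient on $\{f=0\}$ (no point of the zero level set is the minimizer), so by the implicit function theorem $\{f=0\}=\partial C$ would be a $C^2$ curve --- but a strictly convex open set can have a boundary with corners (e.g.\ the intersection of two disks), so no such $f$ exists in general. Your proposed remedy, ``a mild smoothing of $\partial C$'', does not work: the entire argument rests on $f$ vanishing on $aK$ (that is how $D\zeta(x_0)\in aK$ translates into the viscosity equation $f(D\varphi)=0$), and smoothing $\partial C$ moves the zero level set, which may evict points of $aK$ from it; you cannot guarantee the smoothing happens away from $aK$, since $aK$ may be all of $\partial C$ or contain its corners. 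The paper sidesteps this by taking $f$ to be (a shift of) the gauge function of $C$ with respect to an interior point, which is merely Lipschitz and convex with strictly convex sublevel sets, and by invoking the version of the semiconcavity theorem valid at that level of regularity, \cite[Theorem~5.3.7]{CS04}. With that substitution your argument closes, except that the final upgrade to $C^{1,1}_{\loc}$ should be dropped (it would require linear moduli of semiconcavity, hence uniform convexity of $f$); $C^1$ is what you get and all that is needed.
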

\begin{proof}
A strictly convex open subset of $\R^2$ is the sublevel set of its  gauge function with respect to any interior point, see e.g. \cite[Lemma~1.2]{brezis},
so $aK$ is contained in the zero set of a strictly convex Lipschitz function $f\colon\R^2\to\R$.
We assume without loss of generality that $a=e_1$.
According to Proposition~\ref{p:visc}, the function $\varphi=w_1$ is a viscosity solution of $f(D\varphi)=0$, 
and so is $\tilde\varphi(x)=-\varphi(-x)$.
By \cite[Theorem~5.3.7]{CS04} this implies that both $\varphi$ and $\tilde\varphi$ are locally semiconcave in the sense of \cite[Definition~2.1.1]{CS04}, hence $\varphi$ is  both locally semiconcave and locally semiconvex.
We infer that $\varphi$ is $C^1$ by \cite[Theorem~3.3.7]{CS04},
and finally that $w$ is $C^1$ since the differential inclusion $Dw\in K$ gives
 $Dw_2$ as a continuous function of $Dw_1$, see Proposition~\ref{p:11}.
\end{proof}

\subsection{Step 2: Reduction to degenerate curves}\label{sub:step2}

\begin{lem}\label{l:reduc_degen}
If $w \in \Lip(\Omega,\R^2)$ satisfies \eqref{e:GAMMA}, then either $Dw$ is constant in $B_1$ or $Dw\in \Gamma_*$ a.e., where $\Gamma_*=\gamma(I_*)$ for some $I_*=[a_*,b_*]$ or $\R/L\Z$ and $\det(\gamma')=0$ on $I_*$.
\end{lem}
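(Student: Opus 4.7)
My plan is to study $E \doteq \gamma^{-1}([Dw])\subset I$. By Theorem~\ref{theo:connected} the essential range $[Dw]$ is connected, so $E$ is a closed connected subset of $I$: either a single point, a closed sub-arc $[a_*,b_*]$, or $\R/L\Z$. Since $\det(\gamma(t+h)-\gamma(t))=h^2\det\gamma'(t)+o(h^2)$ and \eqref{e:diffinc} forces the left-hand side to be nonnegative, we have $\det\gamma'(t)\geq 0$ throughout $I$; let $I_{\mathrm{ell}}=\{\det\gamma'>0\}$ (open) and $I_{\mathrm{deg}}=I\setminus I_{\mathrm{ell}}$ (closed). The case when $E$ is a single point corresponds to $Dw$ being constant and exhausts the first alternative; the task is therefore to show that if $E$ has more than one point, then $E\subset I_{\mathrm{deg}}$.

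The key local step is a rigidity statement: at any Lebesgue point $x_0$ of $Dw$ with $Dw(x_0)=\gamma(t_1)$ and $t_1\in I_{\mathrm{ell}}$, the gradient $Dw$ is constantly equal to $\gamma(t_1)$ in a whole neighborhood of $x_0$. I would prove this by choosing a small closed interval $\bar J\subset I_{\mathrm{ell}}$ containing $t_1$ on which $\det\gamma'\geq c>0$; the localization property \eqref{e:linfstat} from Theorem~\ref{tint:1} then yields a ball $B_\delta(x_0)$ with $Dw\in\gamma(\bar J)$ a.e.\ in $B_\delta(x_0)$. A short compactness argument (bounding $\det(\gamma(s)-\gamma(t))/|\gamma(s)-\gamma(t)|^2$ away from zero on $\bar J\times\bar J$) shows that $\gamma(\bar J)$ satisfies \eqref{e:diffinc} with $\sigma(s)=c's^2$. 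Classical theory for uniformly elliptic fields (e.g.\ \cite{dSS10}) then yields $w\in C^{1,\alpha}(B_\delta(x_0))$, which can be bootstrapped to $w\in C^2$ via Calder\'on--Zygmund estimates on the Beurling transform using the $C^1$ regularity of $\gamma$. Writing $Dw(x)=\gamma(t(x))$ with $t\in C^1(B_\delta(x_0),\bar J)$, the identities $\curl Du=\curl Dv=0$ translate via the chain rule into a linear system $M(t(x))\nabla t(x)=0$ whose coefficient matrix has determinant $\pm\det\gamma'(t)\neq 0$; hence $\nabla t\equiv 0$ and $Dw\equiv\gamma(t_1)$ on $B_\delta(x_0)$.

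To conclude globally, I would introduce
\[
V\doteq\{x\in\Omega\,:\,Dw=\gamma(t_1)\text{ a.e.\ in some neighborhood of }x\},
\]
which is open and, by the local rigidity, nonempty. If $x^*\in\partial V\cap\Reg(w)$, the essential continuity of $Dw$ at $x^*$ (a consequence of Theorem~\ref{tint:1}) combined with the fact that $V$ clusters at $x^*$ forces $Dw(x^*)=\gamma(t_1)$; applying the local rigidity again places $x^*$ in the interior of $V$, a contradiction. Hence $\partial V\subset\Sing(w)$, and Theorem~\ref{tint:2} gives $\mathcal H^1(\partial V)=0$. Since removing an $\mathcal H^1$-null set from a connected open subset of $\R^2$ leaves it connected, the disjoint decomposition $\Omega\setminus\partial V=V\sqcup(\Omega\setminus\bar V)$ into two open pieces forces $\Omega\setminus\bar V=\emptyset$, so $V$ is dense and has full Lebesgue measure. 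This gives $Dw\equiv\gamma(t_1)$ a.e.\ in $\Omega$, whence $E=\{t_1\}$, contradicting the hypothesis that $E$ contains more than one point. Therefore $E\cap I_{\mathrm{ell}}=\emptyset$, i.e., $E\subset I_{\mathrm{deg}}$, as claimed.

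The main obstacle I anticipate is the regularity bootstrap in the local rigidity step; the uniformly elliptic case $\sigma(s)=c's^2$ is classical but the hypothesis $\gamma\in C^1$ is minimal for the chain-rule computation to run classically. A smoothing of the Beltrami coefficient $h$ via Lemma~\ref{lem:app}, combined with the strong $W^{1,1}_{\mathrm{loc}}$ stability of Theorem~\ref{t:comp}, should furnish the necessary limit passage from regularized problems where $w$ is $C^2$ back to the original $Dw\equiv\gamma(t_1)$ conclusion.
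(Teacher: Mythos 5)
Your proposal is essentially correct, but it takes a genuinely different and much longer route than the paper. The paper's proof is two lines: it invokes the unique continuation result of \cite{DPGT24} (via \cite[Proposition~3.1]{LLP25} and Remark~\ref{r:rigid_est}) to get the dichotomy ``$Dw$ constant or $Dw\in\gamma(\{\det\gamma'=0\})$ a.e.'', and then applies Theorem~\ref{theo:connected} to select a single connected component $\mathcal C$ of the degenerate part, setting $I_*=\gamma^{-1}(\mathcal C)$. You instead reprove that dichotomy from scratch using only the tools of the present paper: localization (Theorem~\ref{tint:1}) to reduce to a uniformly elliptic sub-arc near a Lebesgue point in $\{\det\gamma'>0\}$, a chain-rule rigidity computation to get local constancy there, and then propagation via $\mathcal H^1(\Sing(w))=0$ (Theorem~\ref{tint:2}) and the fact that an $\mathcal H^1$-null relatively closed set does not disconnect $\Omega$. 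This is a legitimate self-contained alternative; what you lose is brevity, what you gain is independence from the external rigidity result. Two small points deserve care. First, your claim that $w\in C^2$ follows from Calder\'on--Zygmund theory is too strong: the Beltrami coefficient $Dh(w_z)$ obtained by differentiating the equation is at best bounded (the contractive extension of $h$ off the curve is only Lipschitz), so one gets $Dw\in W^{1,p}_{\loc}$ for some $p>2$ (or just $W^{1,2}_{\loc}$ from Caccioppoli for the quasiregular difference quotients), not $C^2$; fortunately $t=\gamma^{-1}(Dw)\in W^{1,2}_{\loc}$ already suffices to run the a.e.\ chain rule and conclude $M(t)\nabla t=0$, $\det M=\det\gamma'\neq 0$, $\nabla t=0$. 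Your fallback via Lemma~\ref{lem:app} is not needed and would not obviously work, since the regularized solutions no longer take values in a curve. Second, when you pass from ``$E\cap I_{\mathrm{ell}}\neq\emptyset$'' to the existence of a Lebesgue point with value in $\gamma(I_{\mathrm{ell}})$, you should note that values in the essential range are only limits of Lebesgue-point values; the openness of $I_{\mathrm{ell}}$ closes this minor gap.
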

\begin{proof}[Proof of Lemma~\ref{l:reduc_degen}]
Thanks to the unique continuation result of \cite{DPGT24}, see \cite[Proposition~3.1]{LLP25} and Remark~\ref{r:rigid_est}, 
$Dw$ is either constant in $\Omega$ or takes values into the degenerate part $\Gamma_{d}=\gamma(\lbrace \det(\gamma')=0\rbrace)$.\ We can assume that we are in the latter case.\ According to Theorem \ref{theo:connected}, $Dw$ takes values at almost every point in one single connected component $\mathcal C$ of $\Gamma_d$.\ The required interval is then $I_* \doteq \gamma^{-1}(\mathcal C)$.\end{proof}

\begin{rem}\label{r:rigid_est}
Note that \cite[Proposition~3.1]{LLP25} is stated for $C^2$ curves. The results of \cite{LLP25} are restricted to $C^2$ curves for other reasons, but \cite[Proposition~3.1]{LLP25} relies on \cite{DPGT24}, which does not require any smoothness, and on \cite{LLP24}, where the proof is written for smooth curves, but works for $C^1$ curves modulo minor adaptations.
\end{rem}

Thanks to Lemma~\ref{l:reduc_degen} we assume from now on, in addition to \eqref{e:diffinc}, that $K=\Gamma=\gamma(I)$ for some $I=[a,b]$ or $\R/L\Z$ and $\gamma\in C^1(I,\R^{2\times 2})$ is a homeomorphism onto $\Gamma$ with 
\begin{equation}\label{e:gamma0}
	\text{$|\gamma'|>0$ and $\det(\gamma')=0$ on $I$.}
\end{equation}

\subsection{Step 3: Degenerate graphical curves}\label{sub:step3}

In this subsection we assume that $I=[a,b]$ and that $\Gamma$ is a graph over one of its four components.\ Without loss of generality, we assume that:
\begin{align}\label{eq:gamma_graph}
\gamma(t)
=
\left(
\begin{array}{cc}
-f(t) & t
\\
-q(t) & \eta(t)
\end{array}
\right)
\quad\text{for }t\in I\,,
\end{align}
for some $f,\eta,q\in C^1(I)$ satisfying, thanks to Lemma \ref{l:reduc_degen}, the degeneracy condition $\det(\gamma')=q'-\eta'f'=0$.\ This, together with the ellipticity condition \eqref{e:diffinc}, implies that $f$ cannot be affine on any open interval.

\begin{prop}\label{p:reg_curv_graph}
If $w\colon \Omega\to\R^2$ satisfies $Dw\in\Gamma$ a.e. in $\Omega$, then $w$ is $C^1$, and $Dw$ is constant along characteristic lines directed by $(1,f'(\partial_2w_1))$, if $w = (w_1,w_2)$.
\end{prop}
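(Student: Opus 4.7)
The plan is to exploit the viscosity characterization from Proposition~\ref{p:visc} together with the scalar conservation law hidden in the inclusion $Dw\in\Gamma$, and to reduce the statement to a rigidity property of bilateral entropy solutions.

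Applying Proposition~\ref{p:visc} with $a=e_1$, the scalar $\varphi\doteq w_1$ is a strong viscosity solution of $D\varphi\in e_1\Gamma=\{(-f(t),t):t\in I\}$. Setting $H(p)=p_1+f(p_2)$, this means that $\varphi$ is simultaneously a viscosity sub- and supersolution of $H(D\varphi)=0$ and of $-H(D\varphi)=0$: at every $x_0\in\Omega$, both the super- and sub-differentials $D^\pm\varphi(x_0)$ lie on the graph $\{H=0\}$. Next, the degeneracy $\det(\gamma')=q'-\eta'f'=0$ combined with \eqref{e:diffinc} forces $f$ to be affine on no open sub-interval of $I$: if $f$ were affine on $[t_1,t_2]$, the relation $q'=\eta'f'$ would make $q$ affine in $\eta$, so that $\gamma(t_1)-\gamma(t_2)$ would be rank-one, contradicting \eqref{e:diffinc}. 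In particular $e_1\Gamma$ contains no line segment.

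Introduce $u\doteq\partial_2 w_1\in L^\infty(\Omega;I)$. The identity $\partial_1 w_1=-f(u)$ and the commutation of mixed partials in $\mathcal D'(\Omega)$ yield the scalar conservation law
\[
\partial_1 u+\partial_2 f(u)=0\qquad\text{in }\mathcal D'(\Omega)\,.
\]
The bilateral viscosity property of $w_1$ propagates, via the classical link between Hamilton–Jacobi equations and scalar conservation laws, to a bilateral Kruzhkov-type identity: for any $C^1$ entropy--flux pair $(\eta,q)$ with $q'=\eta' f'$,
\[
\partial_1\eta(u)+\partial_2 q(u)=0\qquad\text{in }\mathcal D'(\Omega)\,.
\]
Testing this against both convex and concave $\eta$ rules out any admissible Rankine–Hugoniot discontinuity of $u$: any jump between values $u^-<u^+$ compatible with both families of entropies would force $f$ to be affine on $[u^-,u^+]$, contradicting the previous step. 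Hence $u\in C^0(\Omega)$; since $Dw_1=(-f(u),u)$ and the inclusion $Dw\in\Gamma$ expresses $Dw_2=(-q(u),\eta(u))$ as a continuous function of $u$, we conclude $w\in C^1(\Omega,\R^2)$. With $u$ continuous, the method of characteristics for the conservation law gives that $u$, and therefore $Dw$, is locally constant along the straight lines with direction $(1,f'(u))=(1,f'(\partial_2 w_1))$.

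The hard step is the continuity of $u$. Classical arguments (Oleinik--Kruzhkov) give it cleanly for uniformly convex flux, whereas here $f$ is only $C^1$ and not affine on intervals; the key is to use both the convex and concave halves of the bilateral entropy family, together with the no-affine-intervals condition on $f$, to kill every admissible shock. If this turns out to be too rigid to run directly, a robust fallback is to combine Theorem~\ref{tint:1} (pointwise $L^\infty$ localization) with Theorem~\ref{tint:2bis} (non-positivity of $\det(D^2 w_1)$ as a measure) on blow-ups at candidate singularities: a non-trivial connected essential range $[\partial_2 w_1](x_0)\subset I$ produces a one-homogeneous blow-up of $w_1$ solving the same bilateral viscosity problem, whose existence is then excluded by the viscosity inclusion on a curve with no segments, completing Step~4.
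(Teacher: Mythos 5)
Your overall architecture coincides with the paper's: Proposition~\ref{p:visc} gives that $w_1$ is a bilateral viscosity solution of $\partial_1 h+f(\partial_2 h)=0$, hence $u=\partial_2 w_1$ satisfies the conservation law $\partial_1 u+\partial_2 f(u)=0$ with \emph{vanishing} entropy productions (the paper makes the "bilateral" step precise via Lemma~\ref{l:visc-ent} applied to both $w$ and $\tilde w(x)=-w(-x)$), and the non-affineness of $f$ is derived exactly as you do. The problem is the step you yourself flag as hard: from "$\partial_1 A(u)+\partial_2 B(u)=0$ for all entropy--flux pairs" you conclude continuity of $u$ by "ruling out any admissible Rankine--Hugoniot discontinuity." That deduction is not valid for a merely bounded weak solution: Rankine--Hugoniot analysis presupposes a jump/BV structure that $u\in L^\infty$ does not a priori have, and the absence of admissible shocks does not by itself exclude more diffuse oscillation or non-jump discontinuities. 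The paper closes this gap by passing to the kinetic formulation: choosing entropy pairs approximating $A=\mathbf 1_{\{t>v\}}$, $B=f'(v)\mathbf 1_{\{t>v\}}$, one gets the free transport equations $\partial_1\chi_v+f'(v)\partial_2\chi_v=0$ for every level $v$, and then continuity follows from a quantitative Lebesgue-point propagation argument (as in \cite[Proposition~6]{DLOW03}) that uses two levels $\tilde v_1,\tilde v_2$ with linearly independent characteristic directions $(1,f'(\tilde v_i))$ — this is where the no-affine-interval property of $f$ actually enters. This kinetic step is the substantive content of the proposition and is missing from your argument.

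Your proposed fallback does not repair this. A $1$-homogeneous blow-up of $w_1$ with gradient in $e_1\Gamma$ is \emph{not} excluded by "the viscosity inclusion on a curve with no segments": the paper's later analysis (Theorem~\ref{tint:3}, Proposition~\ref{p:blowups_hypK*}, and the sharpness discussion after Theorem~\ref{tint:4}) shows that $0$-homogeneous-gradient, vortex-type solutions genuinely exist for closed elliptic curves; excluding them in the graphical/non-closed case again requires the kinetic formulation (Proposition~\ref{p:reg_open_curve}) or the combination of the GMPS structure theorem with Lemma~\ref{l:hom_hessian}, neither of which your sketch supplies. Once continuity of $u$ is established, your concluding remarks (continuity of $Dw_2$ as a function of $u$, and constancy along characteristics for continuous weak solutions) match the paper and are fine.
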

\begin{proof}
%
Let $h=w_1$.\ By Proposition~\ref{p:visc} $h$ is a viscosity solution in $\Omega$ of the Hamilton-Jacobi equation $\partial_1 h +f(\partial_2 h)=0$.\ This implies that $u=\partial_2 h$ is an entropy solution of the scalar conservation law
\begin{align}\label{eq:scl}
\partial_1 u + \partial_2 f(u)=0\,,
\end{align}
see e.g.  Lemma~\ref{l:visc-ent} below.
In other words, all entropy productions of $u$ are nonpositive distributions:
\begin{align*}
\partial_1 A(u)+\partial_2 B(u)\leq 0\,, \text{ for all $A,B\in C^1(\R)$ such that $B'=f'A'$.}
\end{align*}
Since $\tilde w(x)=-w(-x)$ satisfies the same differential inclusion, the function $\tilde u(x)=u(-x)$ also has this property.\ As a consequence, all entropy productions of $u$ are both nonpositive and nonnegative:
\begin{equation}\label{e:entrograph}
\partial_1 A(u)+\partial_2 B(u)= 0\quad\text{in }\mathcal D'(\Omega)\,, \text{ for all $A,B\in C^1(\R)$ such that $B'=f'A'$.}
\end{equation}
Fixing any $v\in\R$ and continuous $\rho$ with support in $(0,1)$ and $\int_0^1\rho=1$, we use the previous equality with
\begin{align*}
	A_k(t)=\int_{-\infty}^{t-v} k\rho(ks)\,ds,
	\quad
	B_k(t)=\int_{-\infty}^t f'(s)A_k'(s)\, ds.
\end{align*}
It is not hard to check that $A_k(t)\to\mathbf \mathbf 1_{\{t>v\}}$ and $B_k(t)\to f'(v)\mathbf 1_{\{t>v\}}$ for all $t\in\R$ as $k \to \infty$.\ By dominated convergence, we deduce
that the indicator function $\chi_v(x)\doteq \mathbf 1_{\{u(x)>v\}}$ solves the free transport equation
\begin{align}\label{eq:kin}
\partial_1 \chi_v +f'(v)\partial_2\chi_v =0\quad\text{in }\mathcal D'(\Omega)\,,
\end{align}
for all $v\in\R$.
This is  the kinetic formulation \cite{LPT94}
associated with the scalar conservation law \eqref{eq:scl}, in the case of zero entropy production.\ The fact that $f$ cannot be affine on any open interval ensures that for any $v_1<v_2\in  [a,b]$ one can find $v_1<\tilde v_1 <\tilde v_2<v_2$ such that $(1,f'(\tilde v_1))$ and $(1,f'(\tilde v_2))$ are linearly independent in $\R^2$.\ This property, combined with the free transport equation, implies that $u$ is continuous, see e.g. \cite[Proposition~6]{DLOW03}.
We sketch here the argument for the readers' convenience: 
to prove e.g. upper semicontinuity
it suffices to show, for all $v_1<v_2$, the existence of $0<\delta<1$ such that
\begin{align*}
u(x_0)\leq v_1 \quad\Longrightarrow
\quad u\leq v_2\text{ a.e. in }B_{\delta r}(x_0)\,,
\end{align*}
for any Lebesgue point $x_0\in\Omega$ with $B_{r}(x_0)\subset\Omega$.
(Upper semicontinuity follows thanks to the fact that $\delta$ is independent of the Lebesgue point $x_0$, see \cite{DLOW03} for details, and lower semicontinuity is proved in the same way.)
To prove this implication, note that
$x_0$ is a Lebesgue point of $\chi_{\tilde v_1}$ with value $\chi_{\tilde v_1}(x_0)=0$.
Using  \eqref{eq:kin} we deduce that all points in the line interval $\tilde I = [x_j+\R (1,f'(\tilde v_1))]\cap B_{r}(x_0)$ are Lebesgue points of $\chi_{\tilde v_1}$, with value $0$.
As $\tilde v_2 > \tilde v_1$, they are also Lebesgue points of $\chi_{\tilde v_2}$ with value $0$, and using again \eqref{eq:kin} we see that all points in the set $[\tilde I +\R(1,f'(\tilde v_2))]\cap B_r(x_0)$ are Lebesgue points of $\chi_{\tilde v_2}$ with value $0$. 
Since $(1,f'(\tilde v_1))$ and $(1,f'(\tilde v_2))$ are linearly independent, 
this set contains $B_{\delta r}(x_0)$ for some $\delta\in (0,1)$ depending on $\tilde v_1,\tilde v_2$ (hence only on $v_1,v_2$), and we conclude that $u\leq v_2$ a.e. in $B_{\delta r}(x_0)$.

 The fact that $u$ is constant along the characteristic lines directed by $(1,f'(u))$ is also a consequence of the free transport equation.\ In fact, any continuous weak solution of \eqref{eq:scl} has this property, see \cite{Dafermos2006}.\ We conclude that $Dw=\gamma(u)$ is continuous and constant along characteristics, as wanted.
\end{proof}
\begin{lem}\label{l:visc-ent}
If $f\in C^1(\R)$, $\Omega\subset\R^2$ is open and $h \in \Lip(\Omega)$ is a viscosity solution of 
\begin{align}\label{eq:HJ_time}
\partial_t h +f(\partial_x h)=0\,,
\end{align}
then $u=\partial_x h$ is an entropy solution of $\partial_t u + \partial_x [f(u)]=0\,.$
%
\end{lem}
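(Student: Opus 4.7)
The plan is to use the classical vanishing-viscosity characterization of entropy solutions. Working on any relatively compact open subdomain of $\Omega$, I would approximate $h$ by smooth solutions $h_\e$ of the regularized Hamilton-Jacobi equation
\[
\partial_t h_\e + f(\partial_x h_\e) = \e\, \partial_{xx} h_\e,
\]
with boundary/initial data converging uniformly to those of $h$; by the Crandall-Lions comparison principle, $h_\e \to h$ locally uniformly. The derivatives $u_\e \doteq \partial_x h_\e$ satisfy the viscous scalar conservation law $\partial_t u_\e + \partial_x f(u_\e) = \e\, \partial_{xx} u_\e$ and stay uniformly bounded in $L^\infty$ by $\|\partial_x h\|_{L^\infty}$ thanks to the maximum principle.

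For any convex $A \in C^2(\R)$ and associated flux $B \in C^1(\R)$ defined by $B' = f' A'$, multiplying the viscous conservation law by $A'(u_\e)$ and applying the chain rule yields the approximate entropy identity
\[
\partial_t A(u_\e) + \partial_x B(u_\e) = \e\, \partial_{xx} A(u_\e) - \e\, A''(u_\e)\,(\partial_x u_\e)^2 \le \e\, \partial_{xx} A(u_\e),
\]
where the inequality uses $A'' \ge 0$. The right-hand side is bounded in $W^{-1,\infty}_{\loc}$ and vanishes in $H^{-1}_{\loc}$ as $\e \to 0$.

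The main obstacle is passing to the limit on the left-hand side, which demands strong $L^1_{\loc}$ convergence of $u_\e$ to $u = \partial_x h$. This is provided by Tartar's compensated compactness for scalar conservation laws in one space dimension: combined with Murat's lemma and the div-curl lemma, the display above forces the Young measure generated by $u_\e$ to be a Dirac mass almost everywhere, yielding $u_\e \to u_*$ strongly in $L^p_{\loc}$ for every finite $p$; the uniform convergence $h_\e \to h$ identifies $u_* = \partial_x h = u$. Testing the approximate entropy inequality against nonnegative $\varphi \in C^\infty_c(\Omega)$ and passing to the limit, then extending to general convex $A \in C^1(\R)$ by a standard density argument, yields $\partial_t A(u) + \partial_x B(u) \le 0$ in $\mathcal{D}'(\Omega)$, as required.
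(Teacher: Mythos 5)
Your overall strategy (vanishing viscosity on the level of $h$, then differentiating to get the viscous conservation law) is the same as the paper's, but two steps do not go through as written. The more serious one is the compensated compactness step: Tartar's theorem for a bounded sequence $u_\e$ with entropy productions precompact in $H^{-1}_{\loc}$ only forces the Young measure $\nu_{t,x}$ to be a Dirac mass when $f$ is genuinely nonlinear (not affine on any nontrivial interval of the relevant range). The lemma assumes only $f\in C^1(\R)$, so $f$ may well contain affine pieces, the Young measure need not collapse, and strong $L^1_{\loc}$ convergence of $u_\e$ -- which you need to pass to the limit in $A(u_\e)$ and $B(u_\e)$ -- is not available by this route. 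Knowing that the weak-$*$ limit of $u_\e$ is $\partial_x h$ does not rescue the argument, since $\langle\nu_{t,x},A\rangle\neq A(\bar u)$ in general. The paper avoids this entirely by invoking the classical convergence of the vanishing viscosity method for the one-dimensional Cauchy problem (Dafermos, \S 6.3), which rests on Kruzhkov-type $L^1$/BV estimates and holds for arbitrary $C^1$ fluxes; the limit is then identified as the unique entropy solution with data $\partial_x h(0,\cdot)$, and separately as $\partial_x h$ via the distributional convergence $\partial_x h_\e\to\partial_x h$.

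The second issue is your localization. Posing the viscous problem on ``a relatively compact open subdomain of $\Omega$'' with ``boundary/initial data'' is not well defined for an evolution equation on an arbitrary space--time open set, and even on a parabolic cylinder the vanishing viscosity limit attains the lateral boundary condition only in a relaxed sense, so boundary layers can prevent $h_\e\to h$ up to the boundary; the comparison principle alone does not give the convergence you assert. The correct fix -- and what the paper does -- is to extend the initial slice $h(t_0-T/2,\cdot)$ to a Lipschitz function on all of $\R$, solve the Cauchy problem for \eqref{eq:HJ_time} on $[0,T]\times\R$, and use the local comparison principle together with the finite speed of propagation $C=\max_{[-L,L]}|f'|$ to conclude that this global solution coincides with $h$ on a cone of dependence; all limits are then taken for the Cauchy problem on the line, where no boundary issues arise.
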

\begin{proof}
Let $L=\|\partial_x h \|_{L^\infty(\Omega)}$ and $C=\max_{[-L,L]} |f'|\,.$ 
To show that $u$ is an entropy solution it suffices to do so in a neighborhood of any $(t_0,x_0)\in\Omega$.\ Pick $T>0$ such that $[t_0-T/2,t_0+T/2]\times [x_0-CT,x_0+CT]\subset\Omega$.\ Translating the coordinates, we assume $t_0-T/2=0$ and $x_0=0$.\ We choose a bounded, compactly supported, $L$-Lipschitz function $\tilde h_0\colon\R\to\R$ such that $\tilde h_0(x)=h(0,x)$ for $x\in [-CT,CT]$, and we consider the unique viscosity solution $\tilde h\colon [0,T]\times \R\to\R$ of \eqref{eq:HJ_time} in $[0,T]\times\R$, see for instance \cite[Theorem VI.2]{Crandall1983}.\ Thanks to the local comparison principle 
\cite[Theorem V.3]{Crandall1983}
it coincides with $h$ in the cone 
$
\mathcal C =\lbrace 0\leq t\leq T,\, |x|\leq C(T-t)\rbrace\,.
$
Moreover $\tilde h$ can be obtained as $\tilde h=\lim \tilde h_\e$, in the sense of distributions, of the vanishing viscosity approximation $\tilde h_\e$ solving
\begin{align}\label{eq:HJ_visc}
\partial_t\tilde h_\e +f(\partial_x \tilde h_\e)  =\e \partial_{xx}^2 \tilde h_\e\quad\text{in }[0,T]\times \R\,,\quad \text{ and } \quad \tilde h_\e(0,\cdot )  =\tilde h_0\quad\text{in }\R\,.
\end{align}
Thus the function $\tilde u_\e =\partial_x \tilde h_\e$ solves
\begin{align}\label{eq:scl_visc}
\partial_t\tilde u_\e +\partial_x [f(\tilde u_\e) ]  =\e \partial_{xx} \tilde u_\e\quad\text{in }[0,T]\times \R\,, \quad \text{ and } \quad \tilde u_\e(0,\cdot )  =\tilde u_0\quad\text{in }\R\,,
\end{align}
with $\tilde u_0=\partial_x \tilde h_0$.
Thanks to \cite[\S~6.3]{dafermos16}, we have $\tilde u_\e\to \tilde u$ as distributions, 
where $\tilde u$ is the unique entropy solution such that $\tilde u(0,\cdot)=\tilde u_0$.
Moreover we also have $\tilde u_\e\to \partial_x\tilde h$ as distributions, and since $\tilde h=h$ in $\mathcal C$ we deduce that $\tilde u =u$ in $\mathcal C$, so $u$ is an entropy solution in $\mathcal C$, and therefore in a neighborhood of $(t_0,x_0)$.
\end{proof}
\begin{rem}
	Lemma \ref{l:visc-ent} is surely well-known to experts, and is in fact an "if and only if" statement.\ For brevity, we only recalled the argument to show the implication we used in the proof above.
\end{rem}

\subsection{Step 4: Partial regularity}\label{sub:step4}

We infer a regularity result under small pointwise oscillation.\ Recall that we work, without loss of generality, with a curve $\Gamma$ satisfying \eqref{e:gamma0}.

\begin{prop}\label{p:reg_curve_small_osc_ptwise}
Let $\Gamma$ satisfy \eqref{e:diffinc} and \eqref{e:gamma0}.\ Then, there exists $\e = \e(\Gamma)>0$ such that, if $w \in \Lip(\Omega,\R^2)$ solves \eqref{e:GAMMA} and $\diam([Dw](\Omega))\leq \e\,$,
then $w \in C^1(\Omega,\R^2)$, and is constant along characteristic lines: for any $x\notin \Sing(w)$, the matrix $Dw$ is 
constant on the connected component of $(x+\R v)\cap \Omega $ containing $x$, where $v$ is any vector $v\in \ran(\cof M)^T$ for $M\in T_{Dw(x)}\Gamma$.
\end{prop}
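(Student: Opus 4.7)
The plan is to reduce Proposition \ref{p:reg_curve_small_osc_ptwise} to the graphical case handled in Proposition \ref{p:reg_curv_graph} via a linear change of coordinates in both the domain and target. Note that the differential inclusion $Dw\in\Gamma$ is equivariant under such transformations: if $P,Q\in \GL(2)$, then $\tilde w(y)\doteq Pw(Qy)$ satisfies $D\tilde w\in P\Gamma Q$, and both \eqref{e:diffinc} and \eqref{e:gamma0} are preserved (up to replacing $\sigma$ by a scalar multiple of itself and arranging $\det(P)\det(Q)>0$). Hence, if $\Gamma$ can locally be brought into the graphical form \eqref{eq:gamma_graph} by such a transformation, Proposition \ref{p:reg_curv_graph} will apply to $\tilde w$.

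The key step is the construction of local graphical charts. Fix $t_0\in I$; since $\gamma'(t_0)\neq 0$ has rank one, write $\gamma'(t_0)=vu^T$ with $v,u\in\R^2\setminus\{0\}$, and choose $P,Q\in\GL(2)$ with $Pv=e_1$ and $Q^T u=e_2$, so that $P\gamma'(t_0)Q=e_1 e_2^T$. By continuity, the $(1,2)$-entry of $t\mapsto P(\gamma(t)-\gamma(t_0))Q$ has nonzero derivative on a whole neighborhood $J_{t_0}\ni t_0$ and defines a local $C^1$-diffeomorphism; reparameterizing by the value $s$ of this entry places the transformed arc $P(\Gamma\cap\gamma(J_{t_0}))Q - P\gamma(t_0)Q$ in the graphical form \eqref{eq:gamma_graph}. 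By compactness of $I$, finitely many such charts $\{(J_i,P_i,Q_i)\}_{i=1}^N$ cover $I$, and we define $\e(\Gamma)>0$ as the Lebesgue number of the resulting cover of $\Gamma$: any subset of $\Gamma$ of diameter at most $\e$ lies in some $\gamma(J_i)$.

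Given $w$ with $\diam([Dw](\Omega))\le\e$, we pick $i$ with $[Dw](\Omega)\subset\gamma(J_i)$ and set $\tilde w(y)\doteq P_i w(Q_i y)$ on $\tilde\Omega\doteq Q_i^{-1}\Omega$. Then $D\tilde w$ takes values in a graphical curve, Proposition \ref{p:reg_curv_graph} gives $\tilde w\in C^1(\tilde\Omega,\R^2)$ with $D\tilde w$ constant along lines directed by $(1,f'(\partial_2\tilde w_1))$, and undoing the coordinate change gives $w\in C^1(\Omega,\R^2)$. For the characteristic direction, the rank-one identity $\cof(vu^T)=(Jv)(Ju)^T$ (with $J$ the $\pi/2$ rotation) yields $\ran(\cof M)^T=\ker M$ for any rank-one $M\in\R^{2\times 2}$; under $\gamma'\mapsto P\gamma' Q$ the kernel transforms as $\ker(P\gamma' Q)=Q^{-1}\ker\gamma'$, so the characteristic direction in $y$, $\ker\tilde\gamma'(s)=Q_i^{-1}\ker\gamma'(t)$, pushes forward through $x=Q_i y$ to $Q_i\cdot Q_i^{-1}\ker\gamma'(t)=\ker\gamma'(t)=\ran(\cof\gamma'(t))^T$, as stated.

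The main technical obstacle is the chart construction: one must verify carefully that the local graphical reduction can always be arranged through an appropriate choice of $P,Q$ (exploiting the rank-one tangent structure imposed by \eqref{e:gamma0}) and that ellipticity \eqref{e:diffinc} is genuinely preserved rather than only weakened. The concluding characteristic computation is clean linear algebra, but requires careful bookkeeping of how rank-one tangents behave under left/right multiplication and reparameterization.
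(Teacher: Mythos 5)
Your proposal is correct and follows essentially the same route as the paper: choose $\e$ by compactness so that the small-oscillation hypothesis confines $[Dw]$ to a graphical arc of $\Gamma$, reduce to the form \eqref{eq:gamma_graph} by a change of coordinates in domain and target, apply Proposition~\ref{p:reg_curv_graph}, and transport the characteristic direction back via $\ran(\cof M)^T=\ker M$ for rank-one $M$. The only (cosmetic) difference is that the paper invokes coordinate permutations where you use general $P,Q\in\GL(2)$ with $\det(P)\det(Q)>0$; your version is a slightly more explicit implementation that makes the preservation of \eqref{e:diffinc} transparent.
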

\begin{proof}
There exists $\e>0$ such that, for any $M\in\Gamma$, the intersection $\Gamma\cap \overline {B_\e(M)}$ is a graph over one of its 4 components. Therefore the small oscillation assumption $\diam([Dw](B_1))\leq\e$ allows to assume that $\Gamma$ is such a graph. Possibly permuting coordinates in the domain and in the target, we may moreover assume that $\Gamma$ is a graph over its $(1,2)$ component as in \eqref{eq:gamma_graph}.
We can then apply Proposition~\ref{p:reg_curv_graph} and deduce that $w = (w_1,w_2)$ is $C^1$ in $\Omega$, and 
constant along a segment directed by $(1,f'(\partial_2w_1))$ and containing $x$.
A direct calculation shows that, for $\gamma$ as in \eqref{eq:gamma_graph} with $q'=f'\eta'$, the range of $(\cof\gamma'(t))^T$ is the line spanned by $(1,f'(t))$,
so this is consistent with the characteristic lines described in the statement of Proposition~\ref{p:reg_curve_small_osc_ptwise}.
\end{proof}

Note that, since \eqref{e:gamma0} holds, the characteristic direction $v=\Psi(t)\in \ran(\cof\gamma'(t))^T$ is uniquely determined in the projective line $\mathbb {RP}^1$.
One can also check that it depends continuously on $t$.

\begin{lem}\label{l:psi}
There exists a unique continuous map $\Psi\colon I\to\mathbb{RP}^1$ such that $\ran(\cof\gamma'(t))^T=\R\Psi(t)$ for all $t\in\R$.
Moreover $\Psi$ is not constant on any open interval.
\end{lem}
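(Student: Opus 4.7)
The plan is to exploit the rank-one factorization of $\gamma'(t)$ forced by \eqref{e:gamma0}. Since $|\gamma'|>0$ and $\det\gamma'=0$ on $I$, each matrix $\gamma'(t)$ has rank exactly one; the explicit formula for $\cof$ shows that the cofactor of a nonzero $2\times 2$ matrix has the same rank as the matrix itself, so $(\cof\gamma'(t))^T$ is also of rank one for every $t\in I$. Its range is therefore a well-defined line in $\R^2$, which yields existence and uniqueness of the map $\Psi\colon I\to\mathbb{RP}^1$ characterized by $\R\Psi(t)=\ran(\cof\gamma'(t))^T$.

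For continuity at a given $t_0\in I$, I would pick any $e\in\R^2$ with $(\cof\gamma'(t_0))^T e\neq 0$. By continuity of $\gamma'$, this nonvanishing persists for $t$ near $t_0$, and since $(\cof\gamma'(t))^T$ has rank one throughout, its range is simply $\R\,(\cof\gamma'(t))^T e$. The spanning vector $t\mapsto (\cof\gamma'(t))^T e$ is continuous, hence so is $\Psi$ in $\mathbb{RP}^1$.

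The substantive content is the non-constancy of $\Psi$ on open intervals, and this is where ellipticity enters. The key observation is that, for $u,v\in\R^2$ with $(x_1,x_2)^\perp=(-x_2,x_1)$,
\[
\cof(uv^T)=u^\perp(v^\perp)^T, \qquad \text{hence}\qquad \ran(\cof(uv^T))^T=\R\, v^\perp,
\]
as one verifies by a direct calculation from \eqref{eq:A}. Choosing at each $t\in I$ a rank-one decomposition $\gamma'(t)=u(t)v(t)^T$, we therefore obtain $\Psi(t)=[v(t)^\perp]$. Suppose $\Psi$ is constant equal to some $[w_0]\in\mathbb{RP}^1$ on an open interval $J\subset I$. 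Then $v(t)\in\R\, w_0^\perp$ for every $t\in J$, so after absorbing the scalar into $u$ we may write $\gamma'(t)=\tilde u(t)(w_0^\perp)^T$ throughout $J$. Integrating between any two distinct $s,t\in J$,
\[
\gamma(t)-\gamma(s)=\left(\int_s^t \tilde u(\tau)\,d\tau\right)(w_0^\perp)^T,
\]
which is a matrix of rank at most one and so has zero determinant. But $\gamma(t)\neq \gamma(s)$ by injectivity of $\gamma$, so applying \eqref{e:diffinc} with $X=\gamma(t)$, $Y=\gamma(s)$ gives $0<\sigma(|\gamma(t)-\gamma(s)|)\leq \det(\gamma(t)-\gamma(s))=0$, a contradiction.

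The main obstacle I anticipate is the bookkeeping in this last step: the factorization $\gamma'(t)=u(t)v(t)^T$ is only pointwise and nonunique, so one must be careful that "$\Psi$ is constant on $J$" really implies the rigid structural statement that every $\gamma'(t)$, $t\in J$, has its rows in the fixed direction $w_0^\perp$. Once this is cleanly in place, the one-line ellipticity contradiction closes the argument.
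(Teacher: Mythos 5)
Your proof is correct. The non-constancy argument is essentially the paper's: if $\Psi\equiv\Psi_0$ on an open interval, then $\gamma'$ takes values in the linear subspace $\{M\colon Mw_0=0\}$ there, so integrating gives a rank-deficient difference $\gamma(t)-\gamma(s)$ with zero determinant, contradicting \eqref{e:diffinc}. Your "bookkeeping" worry resolves itself immediately: the coefficient is $\tilde u(t)=\gamma'(t)w_0^\perp/|w_0^\perp|^2$, which is continuous, or more simply one notes that the condition $\ran(\cof\gamma'(t))^T\subset\R\Psi_0$ confines $\gamma'(t)$ to a fixed linear subspace, and linear subspaces are closed under integration.

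Where you genuinely diverge is the existence/continuity part. The paper passes to an arc-length parametrization, writes the conformal and anti-conformal parts of $\gamma'$ as $\tfrac12 e^{i\alpha}$, $\tfrac12 e^{i\beta}$ via liftings, and computes $\cof\gamma'=ie^{i\frac{\alpha+\beta}{2}}\otimes ie^{i\frac{\beta-\alpha}{2}}$ explicitly, so that $\Psi=ie^{i\frac{\beta-\alpha}{2}}$; the point of this heavier machinery is that continuity into $\mathbb{RP}^1$ is only subtle in the closed-curve case $I=\R/L\Z$, where the liftings satisfy $\alpha(t+L)=\alpha(t)+2k\pi$, $\beta(t+L)=\beta(t)+2\ell\pi$ and hence $\Psi(t+L)=e^{i(\ell-k)\pi}\Psi(t)$, which is why $\Psi$ lives in $\mathbb{RP}^1$ rather than $\mathbb S^1$. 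Your local argument (fix $e$ with $(\cof\gamma'(t_0))^Te\neq0$ and span the range by the continuous vector $(\cof\gamma'(t))^Te$) is shorter and perfectly sufficient, since continuity into $\mathbb{RP}^1$ is a local property. The only thing it does not deliver is the explicit lifting $\Psi=e^{i\psi}$ with controlled monodromy, which the paper reuses later (Proposition~\ref{p:reg_open_curve} and Lemma~\ref{l:ent_zero_v}); but that is not part of the statement of the lemma.
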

\begin{proof}
The continuity of $\Psi$ is contained in \cite[Lemma~5.1]{LLP25},  we provide the proof for completeness.
Without loss of generality we assume that $\gamma$ is an arc-length parametrization, that is, $|\gamma'|=1$ on $I$.\ This, together with the degeneracy $\det(\gamma')=0$, implies that $\mathfrak a=[\gamma']_{\mathcal H}$ and $\mathfrak b=[\gamma']_{\bar{\mathcal H}}$ satisfy $|\mathfrak a|=|\mathfrak b|=1/2$, compare \eqref{alg}.
As continuous maps from $I$ to $\frac 12\mathbb S^1$, 
they can be written as $2\mathfrak a=e^{i\alpha}$, $2\mathfrak b=e^{i\beta}$, for some liftings $\alpha,\beta\colon I\to\R$.
These liftings are continuous in the case $I=[a,b]$, but may fail to be periodic in the non-simply-connected case $I=\R/L\Z$, so that they may not be continuous maps from $I$ to $\R$. However, in this case they can still be identified with continuous functions on $\R$ which satisfy $\alpha(t+L)=\alpha(t)+2k\pi$, $\beta(t+L)=\beta(t)+2\ell\pi$ for some $k,\ell\in\Z$.
 Direct calculation then shows that
 \begin{align*}
 \cof\gamma'(t)=ie^{i\frac{\alpha+\beta}{2}}\otimes ie^{i\frac{\beta-\alpha}{2}}\,,
 \end{align*}
 so $\ran(\cof\gamma')^T=\R\Psi$, where $\Psi=ie^{i\frac{\beta-\alpha}{2}}$ is continuous from $I$ to $\mathbb S^1$ in the case $I=[a,b]$, and identified with a continuous map from $\R$ to $\mathbb S^1$ in the case $I=\R/L\Z$.\ In this latter case it satisfies $\Psi(t+L)=e^{i(\ell-k)\pi}\Psi(t)$, so $\Psi$ is continuous when seen as a map into $\mathbb{RP}^1$.
Moreover, if $\Psi$ is constantly equal to $\Psi_0$ on an open interval $(s,t)$, then integrating $\gamma'$ over $(s,t)$ we find that
 $\ran\cof(\gamma(t)-\gamma(s))^T\subset\R\Psi_0$, in contradiction with the ellipticity assumption \eqref{e:diffinc}.
\end{proof}

Finally, Proposition~\ref{p:reg_curve_small_osc_ptwise} and our previous analysis imply the following partial regularity result.

\begin{prop}\label{p:partial_reg_curve}
Let $w$ solve \eqref{e:GAMMA}. Then $\Sing(w)$ is closed, $\mathcal H^1(\Sing(w)) = 0$ and $w\in C^1(\Omega\setminus\Sing(w))$.
\end{prop}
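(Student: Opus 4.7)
The plan is to combine Theorem~\ref{tint:2} with the small-oscillation regularity statement Proposition~\ref{p:reg_curve_small_osc_ptwise}. The $\mathcal H^1$-negligibility of $\Sing(w)$ is immediate from Theorem~\ref{tint:2}, since $\Gamma$ satisfies \eqref{e:diffinc}. Thus the remaining tasks are to prove that $\Reg(w)$ is open and that $w\in C^1$ on it, and I would handle both at once by producing, at each $x_0\in\Reg(w)$, a ball $B_\delta(x_0)\subset\Reg(w)$ on which $w\in C^1$.

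First I would unpack what it means to lie in $\Reg(w)$. By Definition~\ref{def:reg} together with the discussion following Theorem~\ref{tint:1}, and in particular the localization statement \eqref{e:linfstat}, any $x_0\in\Reg(w)$ is automatically a differentiability point of $w$, and the essential range $[Dw](x_0)$ is the singleton $\{Dw(x_0)\}$. Unwinding the definition \eqref{e:point} of the pointwise essential range, this means that for every $\eta>0$ there exists $\delta>0$ with
\[
[Dw](B_\delta(x_0))\subset B_\eta(Dw(x_0))\,,\qquad\text{so in particular}\qquad \diam([Dw](B_\delta(x_0)))\le 2\eta\,.
\]

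Next, I would apply Proposition~\ref{p:reg_curve_small_osc_ptwise} on $B_\delta(x_0)$ with the choice $\eta=\eps(\Gamma)/2$, where $\eps(\Gamma)$ is the threshold provided by that proposition. This yields $w\in C^1(B_\delta(x_0),\R^2)$; in particular every point of $B_\delta(x_0)$ is a differentiability point of $w$ and hence belongs to $\Reg(w)$, so $B_\delta(x_0)\subset\Reg(w)$. This simultaneously shows that $\Reg(w)$ is open (equivalently, $\Sing(w)$ is closed) and, by covering $\Reg(w)$ with such balls, that $w\in C^1(\Reg(w),\R^2)$. I do not expect a real obstacle here: both ingredients are already in place, and the $L^\infty$ localization \eqref{e:linfstat} derived from Theorem~\ref{tint:1} is precisely the bridge that allows one to invoke the small-oscillation regularity near any differentiability point.
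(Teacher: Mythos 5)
Your proposal is correct and follows essentially the same route as the paper's proof: use Theorem~\ref{tint:1} (via the pointwise essential range) to get $\diam([Dw](B_\delta(x_0)))\le\e(\Gamma)$ at any regular point, then invoke Proposition~\ref{p:reg_curve_small_osc_ptwise} to conclude $C^1$ regularity on $B_\delta(x_0)$, hence openness of $\Reg(w)$; the $\mathcal H^1$-bound is Theorem~\ref{tint:2}. No gaps.
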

\begin{proof}
Thanks to Theorem \ref{t:summa}, it suffices to show that the set $\Reg(w)$ defined in Definition~\ref{def:reg} is open. Let $\e>0$ be as in Proposition~\ref{p:reg_curve_small_osc_ptwise}.\ 
Thanks to Theorem \ref{t:noninc}, if $x\in \Reg(w)$ then there exists $\delta=\delta(\e)>0$ such that $\diam([Dw](B_\delta(x)))\leq \e$, 
see the proof of Theorem \ref{t:summa} for a similar argument, so $w$ is $C^1$ in $B_\delta(x)$, hence $B_\delta(x)\subset\Reg(w)$.
\end{proof}

\subsection{Step 5: Entropy productions}\label{sub:step5}

In this section and the next one, the goal is to analyze the structure of the solution $w$ to $Dw \in \Gamma$ a.e. around singular points, thus completing the proof of Theorem~\ref{tint:3}.\ Recall that we work under assumption \eqref{e:gamma0} for $\Gamma$.\ 

\medskip

Heuristically, one can expect a rigid structure of $w$ near singularities because of the local constancy of $Dw$ along characteristic lines outside $\Sing(w)$ combined with the fact that generic lines do not intersect the $\mathcal H^1$-negligible singular set $\Sing(w)$, i.e. Propositions \ref{p:reg_curve_small_osc_ptwise}-\ref{p:partial_reg_curve}.
One difficulty is that characteristic lines are not generic since their direction depends on the value of $Dw$, so it is not at all obvious that their typical behavior is not to intersect $\Sing(w)$.
This difficulty can be overcome by taking inspiration from the kinetic formulation \eqref{eq:kin} of the scalar conservation law \eqref{eq:scl} used in the proof of Proposition~\ref{p:reg_curv_graph}.
There, the kinetic variable $v$ is decoupled from the value of the solution $u(x)$, and for generic $v$ the line $x+\R(1,f'(v))$ will not intersect a $\mathcal H^1$-negligible set.
Since the kinetic formulation \eqref{eq:kin} is related to the vanishing of entropy productions it becomes natural to introduce similar tools here.\ We refer the reader to \cite{perthame02} for a systematic treatment of the link between entropies and kinetic formulations of conservation laws.

\medskip

The notion of entropy production for the differential inclusion into $\Gamma=\gamma(I)$ has been used already in \cite{LLP25}.\ 
It stems from the
system of conservation laws $\dv\cof(Dw)=0$ 
satisfied by
the map $Dw\colon\Omega\to\Gamma$.
If $Dw$ is $C^1$, 
an application of the chain rule 
provides a whole family of conservation laws $\dv\Sigma(Dw)=0$, 
for any vector field $\Sigma\in C^1(\Gamma,\R^2)$ 
whose tangential derivative $\partial_\tau\Sigma(\gamma(t))\in\R^2$ 
is orthogonal to the kernel of $\cof\gamma'(t)$
 for all $t\in I$.
These vector fields $\Sigma$ are called \emph{entropies}
and we denote their class by
\begin{align*}
\mathcal E_\Gamma
&
=\big\lbrace
\Sigma\in C^1(\Gamma,\R^2)\colon
\partial_\tau\Sigma(A)\in (\ker(\cof M))^\perp =\ran(\cof M)^T\,,
 \;\forall A\in\Gamma,\, M\in T_A\Gamma\setminus\lbrace 0\rbrace
\big\rbrace
\\
&
=
\big\lbrace
\Sigma\in C^1(\Gamma,\R^2)\colon
(\Sigma\circ\gamma)'(t)\in (\ker(\cof \gamma'(t)))^\perp =\ran(\cof \gamma'(t))^T\,,
 \;\forall t\in I
\big\rbrace
\,.
\end{align*}
If $w$ is just a Lipschitz solution to $Dw\in\Gamma$ a.e. then there is no direct reason for the entropy productions $\dv\Sigma(Dw)$ to vanish.\ The first step is to show that this actually happens, as for smooth solutions.

\begin{prop}\label{p:ent}
If $w \in \Lip(\Omega,\R^2)$ satisfies \eqref{e:GAMMA}, then $\dv\Sigma(Dw)=0$ in $\mathcal D'(\Omega)$ for all $\Sigma\in \mathcal E_\Sigma$.
\end{prop}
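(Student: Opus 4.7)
The plan is to combine the partial regularity supplied by Proposition~\ref{p:partial_reg_curve} with the vanishing entropy productions already implicit in the proof of Proposition~\ref{p:reg_curv_graph}, and then to extend the resulting conservation law across the $\mathcal H^1$-negligible singular set via a cutoff argument for bounded vector fields.

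Fix $\Sigma\in\mathcal E_\Gamma$. By Proposition~\ref{p:partial_reg_curve}, $\Sing(w)$ is closed, $\mathcal H^1(\Sing(w))=0$, and $w\in C^1(\Omega\setminus\Sing(w),\R^2)$. Given $x_0\in\Omega\setminus\Sing(w)$, the continuity of $Dw$ on this open set lets us pick an open neighborhood $U\Subset\Omega\setminus\Sing(w)$ on which $\diam([Dw](U))$ is smaller than the $\e$ of Proposition~\ref{p:reg_curve_small_osc_ptwise}. Performing the coordinate permutations in source and target described in the proof of that proposition---which preserve both the class $\mathcal E_\Gamma$ and the distributional divergence---we may assume $\gamma$ has the graphical form \eqref{eq:gamma_graph} on $U$. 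Since $\ran(\cof\gamma'(t))^T=\R(1,f'(t))$, the entropy condition on $\Sigma=(\Sigma_1,\Sigma_2)$ translates into the compatibility $B'=f'A'$ on $I$ for $A\doteq\Sigma_1\circ\gamma$, $B\doteq\Sigma_2\circ\gamma$. Extending $A,B$ arbitrarily to $C^1(\R)$ (irrelevant since $u\doteq\partial_2 w_1$ takes values in $I$ a.e.), the identity \eqref{e:entrograph} proved inside Proposition~\ref{p:reg_curv_graph} then yields
\[
\dv\Sigma(Dw)=\partial_1 A(u)+\partial_2 B(u)=0\quad\text{in }\mathcal D'(U),
\]
and a standard covering argument gives $\dv\Sigma(Dw)=0$ in $\mathcal D'(\Omega\setminus\Sing(w))$.

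It remains to remove $\Sing(w)$. The distribution $T\doteq\dv\Sigma(Dw)$ on $\Omega$ is thus supported in $\Sing(w)$, and $\Sigma(Dw)\in L^\infty(\Omega,\R^2)$ since $\Sigma$ is continuous on the compact set $\Gamma$. Fix $\varphi\in C^\infty_c(\Omega)$. For each $\e\in(0,1)$, using $\mathcal H^1(\Sing(w))=0$, cover the compact set $\Sing(w)\cap\supp\varphi$ by finitely many balls $B_{r_j}(x_j)$ with $\max_j r_j<\e$ and $\sum_j r_j<\e$, and pick $\psi_\e\in C^\infty_c(\R^2)$ with $\psi_\e\equiv 0$ on $\bigcup_j\overline{B_{r_j/2}(x_j)}$, $\psi_\e\equiv 1$ outside $\bigcup_j B_{2r_j}(x_j)$, and $|\nabla\psi_\e|\leq C/r_j$ on each annulus. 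Since $\varphi\psi_\e\in C^\infty_c(\Omega\setminus\Sing(w))$, we have $\langle T,\varphi\psi_\e\rangle=0$, giving
\[
|\langle T,\varphi\rangle|=\Bigl|\int_\Omega \Sigma(Dw)\cdot\nabla\bigl(\varphi(1-\psi_\e)\bigr)\,dx\Bigr|\leq C\|\Sigma(Dw)\|_\infty\Bigl(\|\nabla\varphi\|_\infty\sum_j r_j^2+\|\varphi\|_\infty\sum_j r_j\Bigr)\leq C'\e,
\]
which vanishes as $\e\to 0$, and hence $T=0$.

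The only delicate point is to check that the identity \eqref{e:entrograph} from Proposition~\ref{p:reg_curv_graph} accounts for \emph{every} $\Sigma\in\mathcal E_\Gamma$ rather than only a distinguished subclass; this is settled by the observation that in a graphical chart $\mathcal E_\Gamma$ corresponds exactly to the $C^1$ pairs $(A,B)$ with $B'=f'A'$. The sharp dimensional match between the $\mathcal H^1$-null singular set and the divergence of an $L^\infty$ vector field in $\R^2$ is then what makes the cutoff argument close the proof.
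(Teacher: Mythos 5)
Your proof is correct and follows essentially the same route as the paper's: locally reduce to the graphical parametrization \eqref{eq:gamma_graph}, observe that the entropy condition becomes $B'=f'A'$ for $A=\Sigma_1\circ\gamma$, $B=\Sigma_2\circ\gamma$, invoke \eqref{e:entrograph}, and then remove the closed $\mathcal H^1$-null set $\Sing(w)$ using that $\Sigma(Dw)\in L^\infty$. The only difference is that the paper delegates the last step to the removability theorem of Harvey--Polking (\cite[Theorem~4.1(b)]{Harvey1970}) while you prove it by hand with a cutoff; your construction has the minor slip that $\psi_\e$ vanishes only on the half-radius balls $\overline{B_{r_j/2}(x_j)}$, which need not cover $\Sing(w)\cap\supp\varphi$, so $\varphi\psi_\e$ is not guaranteed to be an admissible test function on $\Omega\setminus\Sing(w)$ --- this is fixed by taking $\psi_\e\equiv 0$ on the full balls $\overline{B_{r_j}(x_j)}$ and $\equiv 1$ outside $\bigcup_j B_{2r_j}(x_j)$, which changes nothing in the estimate.
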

\begin{proof}[Proof]
Recall from Proposition~\ref{p:partial_reg_curve} that $w$ is $C^1$ outside a closed $\mathcal H^1$-negligible set $\Sing(w)$.
We fix $\Sigma\in \mathcal E_\Gamma$ and start by showing that,
for any $x\in \Omega\setminus \Sing(w)$ there exists $r>0$ such that
$\dv\Sigma(Dw)=0$ in $\mathcal D'(B_r(x))$.
Since $Dw$ is continuous in $\Omega\setminus\Sing(w)$, we may choose $r>0$ such that $Dw(B_r(x))$ is contained in a portion $\Gamma_* = \gamma_*(I_*)\subset\Gamma$ where $\gamma^*$ can be written as in \eqref{eq:gamma_graph}, without loss of generality.\ 
With that notation, the kernel of $\cof\gamma'_*(t)$ is spanned by the vector $(-f'(t),1)$, and the condition 
$\partial_\tau\Sigma(\gamma)\perp\ker\cof(\gamma')$ therefore implies 
$\partial_\tau\Sigma_2(\gamma)=f'\partial_{\tau}\Sigma_1(\gamma)$ on $I_*$,
that is, $(\Sigma_2\circ\gamma_*)'=f'(\Sigma_1\circ\gamma_*)'$ on $I_*$.
Now recall from the proof of Proposition~\ref{p:reg_curv_graph} that $u=\partial_2 w_1$ solves \eqref{e:entrograph}, namely
\begin{align*}
\partial_1 A(u)+\partial_2 B(u)=0\quad\text{in }B_r(x)\,, \text{ for all $A,B\in C^1(\R)$ such that $B'=f'A'$.}
\end{align*}
Applying this to $A=\Sigma_1\circ\gamma$ and $B=\Sigma_2\circ\gamma$, and noting that $\Sigma(Dw)=\Sigma(\gamma(u))$, we get $\dv\Sigma(Dw)=0$ in $B_r(x)$.
This is valid for any $x\in\Omega\setminus\Sing(w)$ and some $r=r(x)>0$, hence $\dv\Sigma(Dw)=0$ in $\mathcal D'(\Omega\setminus \Sing(w))$.\ Since $\Sigma(Dw)\in L^\infty(\Omega)$ and $\mathcal H^1(\Sing(w))=0$, 
\cite[Theorem~4.1(b)]{Harvey1970} yields $\dv\Sigma(Dw)=0$ in $\mathcal D'(\Omega)$.
\end{proof}

Through this Proposition, we could in principle apply the strategy of \cite[\textsection{7}]{LLP25} to conclude the proof.\ However, in \cite{LLP25} the curve $\Gamma$ is assumed to have $C^2$ regularity, and some nontrivial adaptations are required to deal with our lower $C^1$ regularity.

\subsubsection{The case of a nonclosed curve}

In the case of a nonclosed curve, $I=[a,b]$, Proposition~\ref{p:ent} implies quite directly the continuity of  $Dw$, the argument being essentially the same as the one of Proposition \ref{p:reg_curv_graph}.

\begin{prop}\label{p:reg_open_curve}
Assume $I=[a,b]$ and $w$ solves \eqref{e:GAMMA}.\ Then $w$ is $C^1$ in $\Omega$.
\end{prop}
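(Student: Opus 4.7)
The strategy is to mirror the proof of Proposition~\ref{p:reg_curv_graph}, in which continuity of the scalar variable $u=\partial_2 w_1$ was obtained from a kinetic formulation combined with the fact that $f$ is not affine on any open interval. Here the role of $f'$ is played by the characteristic direction map $\Psi$ from Lemma~\ref{l:psi}, and the kinetic equations will arise from Proposition~\ref{p:ent}. I first introduce the measurable map $u\colon\Omega\to I$ defined by $\gamma(u(x))=Dw(x)$; this is well-defined since $\gamma$ is a homeomorphism onto $\Gamma$. The goal is to prove that $u$ is continuous.

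The decisive observation in the non-closed case is that, since $I=[a,b]$ is simply connected, the $\mathbb{RP}^1$-valued map $\Psi$ lifts to a continuous $\mathbb{S}^1$-valued map (still denoted $\Psi$), and then for every $\phi\in C^0(I)$ the formula
\begin{equation*}
(\Sigma_\phi\circ\gamma)(t)\doteq \int_a^t \phi(s)\Psi(s)\,ds
\end{equation*}
defines a genuine $C^1$ entropy $\Sigma_\phi\in\mathcal E_\Gamma$. Proposition~\ref{p:ent} therefore yields $\dv\Sigma_\phi(Dw)=0$ in $\mathcal D'(\Omega)$ for every such $\phi$. Choosing $\phi_k(t)=k\rho(k(t-v))$ for a standard mollifier $\rho$ and a fixed $v\in I$, the primitives converge pointwise to $\Psi_j(v)\mathbf 1_{\{t>v\}}$ while remaining uniformly bounded; passing to the distributional limit by dominated convergence produces the kinetic transport equation
\begin{equation*}
\Psi_1(v)\,\partial_1\chi_v+\Psi_2(v)\,\partial_2\chi_v=0\quad\text{in }\mathcal D'(\Omega),\qquad \chi_v(x)\doteq \mathbf 1_{\{u(x)>v\}},
\end{equation*}
for every $v\in I$. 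This is precisely the analogue of \eqref{eq:kin}, with $(1,f'(v))$ replaced by $\Psi(v)$.

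From this point the argument is identical to the continuity proof sketched after \eqref{eq:kin}: given $v_1<v_2$, Lemma~\ref{l:psi} produces $v_1<\tilde v_1<\tilde v_2<v_2$ such that $\Psi(\tilde v_1)$ and $\Psi(\tilde v_2)$ are linearly independent, and propagating the information $\chi_{\tilde v_1}(x_0)=0$ along the line directed by $\Psi(\tilde v_1)$ and then $\chi_{\tilde v_2}=0$ along lines directed by $\Psi(\tilde v_2)$ shows that $u\le v_2$ on a ball $B_{\delta r}(x_0)$, giving upper semicontinuity; lower semicontinuity is symmetric. Continuity of $u$ upgrades to continuity of $Dw=\gamma(u)$, hence $w\in C^1(\Omega)$.

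The main subtlety, and the only reason this short argument applies specifically to a non-closed curve, lies in the construction of the entropies $\Sigma_\phi$: in the simply-connected case $I=[a,b]$ one can freely antidifferentiate $\phi\Psi$ from $a$, whereas for $I=\R/L\Z$ the resulting $\Sigma_\phi$ is single-valued on $\Gamma$ only when $\int_I \phi\Psi\,ds=0$, a constraint incompatible with localizing $\phi$ near an arbitrary $v\in I$. This is precisely why the closed case requires the separate, more delicate treatment carried out in Proposition~\ref{p:reg_closed_curve}.
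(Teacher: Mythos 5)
Your proposal is correct and follows essentially the same route as the paper: the paper likewise lifts $\Psi$ to an $\mathbb S^1$-valued map on $I=[a,b]$, builds the approximating entropies $\tilde\Sigma_j(t)=\int_a^t \rho_j(s-\alpha)\Psi(s)\,ds$, passes to the limit via Proposition~\ref{p:ent} to get the kinetic formulation $\Psi(\alpha)\cdot D(\mathbf 1_{\{\theta>\alpha\}})=0$, and concludes continuity from the non-constancy of $\Psi$ and the argument of \cite[Proposition~6]{DLOW03} recalled in Proposition~\ref{p:reg_curv_graph}. Your closing remark on why the construction fails for $I=\R/L\Z$ matches the paper's motivation for the separate treatment of closed curves.
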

\begin{proof}
Since $I=[a,b]$, we can lift the map $\Psi$ from Lemma~\ref{l:psi} to
a continuous map from $I$ to $\mathbb S^1$, which we still denote by $\Psi$.
For any $\alpha\in [a,b]$, the 
map $\Sigma^\alpha\colon \Gamma\to\R^2$ given by $\Sigma^\alpha \circ\gamma(t) \doteq \Psi(\alpha) \mathbf 1_{\{t > \alpha\}}$ is a pointwise limit $\Sigma^\alpha =\lim \Sigma_j$ of entropies $\Sigma_j\in \mathcal E_\Gamma$.
One can see this for instance by setting:
\begin{align*}
\Sigma_j=\tilde\Sigma_j\circ\gamma^{-1},\quad  \text{ with }\tilde\Sigma_j(t)=\int_a^t \rho_j(s-\alpha)\Psi(s)\, ds\,,
\end{align*}
where $\rho_j(s)=j\rho(sj)$ for some continuous nonnegative function $\rho$ with support in $(0,1)$ and unit integral.
Let $\theta\colon\Omega\to I$ be such that $Dw=\gamma(\theta)$.
By Proposition~\ref{p:ent}, we have $\dv\Sigma_j(\gamma(\theta))=0$ in $\mathcal D'(\Omega)$ for all $j\geq 1$ and therefore, by dominated convergence, $\dv\Sigma^\alpha(\gamma(\theta))=0$.
This amounts to the kinetic formulation
\begin{equation}\label{e:kinfor}
\Psi(\alpha)\cdot D(\mathbf 1_{\{\theta > \alpha\}}) =0\quad\text{in }\mathcal D'(\Omega)\,, \text{ for all }
\alpha\in [a,b].
\end{equation}
Recall from Lemma~\ref{l:psi} that $\Psi$ is not constant on any open interval.
Thus, for any $\alpha_1<\alpha_2\in [a,b]$, there exist 
$\alpha_1<\tilde\alpha_1<\tilde\alpha_2<\alpha_2$ such that $\Psi(\tilde\alpha_1)$ and $\Psi(\tilde\alpha_2)$ are linearly independent in $\R^2$.
Combining this with \eqref{e:kinfor}, we conclude that $\theta$ is continuous by  \cite[Proposition~6]{DLOW03} (whose argument is recalled in the proof of Proposition~\ref{p:reg_curv_graph}), and hence so is $Dw=\gamma(\theta)$.
\end{proof}

\subsubsection{The case of a closed curve}

We focus now on the case $I=\R/L\Z$.
The first step is to observe, as in \cite{LLP25},
that the map $v=\Psi(\theta)\colon \Omega\to \mathbb{RP}^1$, 
which indicates the direction of characteristics, 
is a zero-state of the unoriented Aviles-Giga functional, as defined in \cite{GMPS24}.

\begin{lem}\label{l:ent_zero_v}
The map $v$ satisfies
\begin{align}\label{eq:ent_zero_v}
\dv\Phi(v)=0\quad\forall \Phi\in C^1(\mathbb S^1,\R^2)
\text{ even such that }\frac{d}{dt}\Phi(e^{it})\cdot ie^{it}=0
\quad\forall t\in\R\,.
\end{align}
\end{lem}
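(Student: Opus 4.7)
The plan is to deduce the identity from the vanishing of entropy productions in Proposition~\ref{p:ent}. Defining $\Sigma_\Phi\colon\Gamma\to\R^2$ by $\Sigma_\Phi(\gamma(t)):=\Phi(\Psi(t))$ (well-defined thanks to the evenness of $\Phi$, even though $\Psi$ is only $\mathbb{RP}^1$-valued), we have $\Phi(v)=\Sigma_\Phi(Dw)$, so the claim is equivalent to $\dv\Sigma_\Phi(Dw)=0$. If $\Sigma_\Phi$ were a $C^1$ entropy of $\Gamma$ this would be immediate from Proposition~\ref{p:ent}; the issue is that $\Sigma_\Phi$ is only continuous, since $\gamma$ is only $C^1$ and hence $\Psi$ is only continuous. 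The heart of the proof is therefore to approximate $\Sigma_\Phi$ uniformly on $\Gamma$ by entropies $\Sigma_j\in\mathcal E_\Gamma$, and to use that $\Sigma\mapsto \dv\Sigma(Dw)$ is continuous from $C^0(\Gamma,\R^2)$ to $\mathcal D'(\Omega)$ under uniform convergence.

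I would first make the class of admissible $\Phi$ explicit. Writing $\Phi(e^{it})=A(t)e^{it}+B(t)ie^{it}$ in the moving complex frame, the tangential orthogonality condition $\frac{d}{dt}\Phi(e^{it})\cdot ie^{it}=0$ forces $A=-B'$, while evenness of $\Phi$ forces $B$ (and therefore $A$) to be $\pi$-antiperiodic. By mollifying $B$ in a way that preserves $\pi$-antiperiodicity, I may assume $B\in C^\infty$, so $\Phi\in C^\infty$. A formal computation, valid on any interval where a local lift $\Psi=e^{i\phi}$ admits a differentiable $\phi$, then gives
\begin{equation*}
\tfrac{d}{dt}\Phi(\Psi(t))=-\phi'(t)\bigl[B''(\phi(t))+B(\phi(t))\bigr]\,\Psi(t)\in\R\Psi(t),
\end{equation*}
confirming that $\Sigma_\Phi$ formally belongs to $\mathcal E_\Gamma$, with tangential derivative parallel to $\Psi$.

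The approximation step then builds $\Sigma_j\in\mathcal E_\Gamma$ as a primitive of the form $\Sigma_j\circ\gamma(t)=c_j+\int_0^t\mu_j(s)\,\tilde\Psi(s)\,ds$, where $\tilde\Psi$ is a continuous lift of $\Psi$ to $\mathbb S^1$ (globally on $[a,b]$ in the open case, and on the cover $\R$ with $\tilde\Psi(t+L)=\pm\tilde\Psi(t)$ in the closed case) and $\mu_j$ is a continuous scalar. On a fine partition $0=t_0<\cdots<t_N=L$ one picks $\mu_j$ so that $\Sigma_j\circ\gamma$ matches $\Phi\circ\tilde\Psi$ at every node $t_k$: the formal identity of the previous paragraph ensures that the average slope of $\Phi\circ\tilde\Psi$ over each subinterval is approximately parallel to $\tilde\Psi$ at an intermediate point, so the scalar $\mu_j$ can be chosen with a per-interval error controlled by the moduli of continuity of $\Psi$ and of $B,B',B''$, which telescope to a globally small uniform error as the mesh tends to zero.

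The main obstacle is the closed case $I=\R/L\Z$ with nontrivial monodromy, where $\tilde\Psi(t+L)=-\tilde\Psi(t)$ and periodicity of $\Sigma_j\circ\gamma$ on $I$ forces $\mu_j$ to be $L$-antiperiodic with $\int_0^L\mu_j\tilde\Psi=0$. This is where the evenness of $\Phi$ is used in an essential way: the $\pi$-antiperiodicity of $B$ is precisely what makes $\Phi\circ\tilde\Psi$ itself $L$-periodic on $\R$ and forces the formal slope $-\phi'[B''(\phi)+B(\phi)]$ to be $L$-antiperiodic, so a compatibly antiperiodic $\mu_j$ satisfying the vanishing-integral constraint can be arranged while still uniformly matching $\Phi\circ\tilde\Psi$. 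Once the $\Sigma_j\in\mathcal E_\Gamma$ are in hand, Proposition~\ref{p:ent} gives $\dv\Sigma_j(Dw)=0$ in $\mathcal D'(\Omega)$ for every $j$, and the uniform convergence $\Sigma_j(Dw)\to\Phi(v)$ on $\Omega$ lets us pass to the distributional limit to conclude $\dv\Phi(v)=0$.
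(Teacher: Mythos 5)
Your proposal takes exactly the route that the paper explicitly flags as problematic: in its proof of this lemma the paper remarks that ``it would be natural to argue by approximation, but it is not clear how to construct a sequence of entropies $\Sigma_j\in\mathcal E_\Gamma$ converging pointwise to $\Sigma$''. The gap sits precisely in your telescoping step. Since $\Gamma$ is only $C^1$, the lift $\psi$ of the characteristic direction $\Psi=e^{i\psi}$ is merely continuous; it need not be of bounded variation. Your per-interval error in matching the increment $\Phi(\tilde\Psi(t_{k+1}))-\Phi(\tilde\Psi(t_k))=\int_{\psi(t_k)}^{\psi(t_{k+1})}\lambda(e^{is})e^{is}\,ds$ by a vector parallel to $\tilde\Psi$ at an intermediate point is of size $\omega(\delta)\,|\psi(t_{k+1})-\psi(t_k)|$, so the accumulated error at a node is bounded by $\omega(\delta)$ times the variation of $\psi$ over the partition, which can tend to $+\infty$ as the mesh refines; the product need not go to zero. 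If instead you insist on matching the nodes exactly (possible because $\Psi$ takes two independent values on every subinterval), the required $\mu_j$ can be arbitrarily large when those values are nearly parallel, and then $\Sigma_j\circ\gamma$ is no longer uniformly (or even pointwise, with uniform bounds) close to $\Phi\circ\tilde\Psi$ between nodes. Neither variant delivers the convergence you need to pass to the limit in $\dv\Sigma_j(Dw)=0$.

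The paper circumvents this entirely. It works locally near a point of $\Omega\setminus\Sing(w)$, where $Dw$ takes values in a graphical sub-arc $\gamma([a,b])$ and where the kinetic formulation $\dv\big(\Psi(\alpha)\mathbf 1_{\{\theta>\alpha\}}\big)=0$ from the proof of Proposition~\ref{p:reg_open_curve} is available (that formulation is obtained from genuine entropies of step type, for which pointwise convergence plus dominated convergence does work). Decomposing the superlevel sets $\{\psi(\theta)>s\}$ into components and superposing over the kinetic variable with weight $\lambda(e^{i\tau})$ reconstructs $\dv\Phi(v)=0$ in a small ball; the global statement then follows from the removability of the closed $\mathcal H^1$-null set $\Sing(w)$ via \cite[Theorem~4.1(b)]{Harvey1970}, as in Proposition~\ref{p:ent}. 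To repair your argument you would either have to prove an approximation result for $C^1$-curve entropies that the authors state they do not know how to obtain, or switch to this localization-plus-kinetic-superposition scheme.
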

\begin{proof}
Denote $\Psi(t)=e^{i\psi(t)}$ with $\psi\in C^0(\R)$ such that $\psi(t+L)=\psi(t)+k\pi$ for some $k\in\Z$.
Define also
\begin{align*}
\lambda(e^{it})=e^{it}\cdot \frac{d}{dt}\Phi(e^{it})\,,
\end{align*}
so that $\lambda\in C^0(\mathbb S^1)$ is odd since $\Phi$ is even.
If $\Gamma$ is $C^2$, then $\Psi$ and $\psi$ are $C^1$, 
and we may simply apply Proposition~\ref{p:ent} to
$\Sigma=\Phi\circ\Psi\circ\gamma^{-1}$, 
which satisfies  
\begin{align*}
(\Sigma\circ\gamma)'(t)=(\Phi\circ\Psi)'(t)=\lambda(e^{i\psi(t)})\psi'(t)e^{i\psi(t)}\,,
\end{align*}
and belongs therefore to $\mathcal E_\Sigma$ by definition of $\Psi=e^{i\psi}$. Given that $\Gamma$ is merely $C^1$, we cannot use this direct calculation.
It would be natural to argue by approximation, but it is not clear how to construct a sequence of entropies $\Sigma_j\in\mathcal E_\Gamma$ converging pointwise to $\Sigma$.
We rely instead on the removability of $\Sing(w)$ as in Proposition~\ref{p:ent} and on the kinetic formulation obtained in Proposition~\ref{p:reg_open_curve} for nonclosed curves.\ Thus, as in Proposition~\ref{p:ent}, it suffices to show that, for fixed $x \in \Omega \setminus \Sing(w)$:
\begin{equation}\label{e:rem}
 \dv\Phi(v)=0, \quad \text{ in }\mathcal{D}'(B_r(x)) \text{ for some }r = r(x) > 0\,. 
\end{equation}
%
We choose $r>0$ small enough that 
$Dw(B_r(x))\subset\gamma([a,b])$ for some $a<b<a+L$
such that $\Psi$ admits a continuous lifting from $[a,b]$ into the arc $A=\lbrace e^{is}\colon s_1<s<s_2\rbrace$ for some $s_1<s_2<s_1+\pi$.
We write $\Psi=e^{i\psi}$ on $[a,b]$, with 
$\psi\in C^0([a,b])$ such that $s_1<\psi<s_2$.
Then, by the proof of Proposition~\ref{p:reg_open_curve},
the function $\theta\colon\Omega\to [a,b]$ such that $Dw=\gamma(\theta)$ satisfies \eqref{e:kinfor}, namely
\begin{align*}
\dv\big(
\Psi(\alpha)\mathbf 1_{\{\theta>\alpha\}}
\big) =0\quad\text{in }\mathcal D'(B_r(x))\,, \quad \text{ for all $\alpha\in [a,b]$.}
\end{align*}
Similarly, we get that $\Psi(\alpha)\mathbf{1}_{\{\theta \leq\alpha\}}$ and $\Psi(\alpha) \mathbf{1}_{\{\theta<\alpha\}}$ are divergence-free in $B_r(x)$. 
Moreover, we can assume, without loss of generality, that $|\{\theta = a\}\cup \{\theta = b\}| = 0$.
 These observations lead to
\begin{equation}\label{e:div0}
\dv\big(\xi \mathbf{1}_{\{\alpha<\theta<\beta\}}\big)=0
\quad\text{in }\mathcal D'(B_r(x))\,,\quad \text{for all }
\xi\in A
\text{ and }\alpha, \beta\in \lbrace a,b\rbrace \cup\Psi^{-1}(\lbrace\xi\rbrace),\; \alpha < \beta.
\end{equation}
Given $s\in (s_1,s_2)$ and $\xi=e^{is}$, the open set $\lbrace t: \psi(t)>s\rbrace\subset (a,b)$ is a countable union of intervals $(\alpha_j,\beta_j)$ with
$\alpha_j < \beta_j\in \lbrace a,b\rbrace \cup\Psi^{-1}(\lbrace\xi\rbrace)$.
By dominated convergence and \eqref{e:div0} we infer
\begin{equation}\label{e:diveis}
\dv\left( 
e^{is} \mathbf{1}_{\{\psi(\theta)>s\}} \right) =0 \quad\text{in }\mathcal D'(B_r(x))\,,\quad\forall s\in (s_1,s_2)\,.
\end{equation}
Moreover for $s\in (s_1,s_2)$ we have
\begin{align*}
\Phi(e^{is})=\Phi(e^{is_1})+\int_{s_1}^{s_2} \lambda(e^{i\tau})\mathbf{1}_{\{\tau<s\}}e^{i\tau}\,d\tau\,,
\end{align*}
so we deduce, for any $\zeta\in C_c^1(B_r(x_0))$ using Fubini's theorem,
\begin{align*}
\langle \dv\Phi(v),\zeta\rangle
&
=
\int_{s_1}^{s_2}\lambda(e^{i\tau})
\langle \dv\big(e^{i\tau}\mathbf{1}_{\{\psi(\theta)>\tau\}}\big),\zeta\rangle \, d\tau
\overset{\eqref{e:diveis}}{=}0\,,
\end{align*}
thus concluding the proof.
\end{proof}

Lemma~\ref{l:ent_zero_v} allows us to use \cite[Theorem~6.5]{GMPS24}, 
 which we recall for the reader's convenience.

\begin{thm}[\cite{GMPS24}]
\label{thm:GMPS}
If $v\colon\Omega\to\mathbb{RP}^1$ satisfies \eqref{eq:ent_zero_v},
then it has the following properties.
\begin{enumerate}
	\item The map $v$ is locally Lipschitz in $\Omega\setminus \mathcal{S}_v$ for a locally finite set $\mathcal{S}_v \subset \Omega$.
	\item For $x \in \Omega \setminus \mathcal{S}_v$, $v \equiv v(x)$ on the connected component of $[x + \mathbb{R}v(x)]\cap (\Omega\setminus \mathcal{S}_v)$ containing $x$;
	\item\label{enu:3} if $B \doteq B_r(x_0)$, $B_{2r}(x_0) \subset \Omega$, and $2B \cap \mathcal{S}_v = \{x_0\}$, then either
	\begin{enumerate}
		\item $v(x) = V^{x_0}(x)\doteq\frac{x-x_0}{|x-x_0|}$ in $B\setminus \{x_0\}$;
		\item or there exists $\xi \in \mathbb{S}^1$ such that 
		\begin{itemize}
			\item $v(x) = V^{x_0}(x)$ in $\{x \in B\colon (x-x_0,\xi) > 0\}$;
			\item $v$ is Lipschitz in $\{x \in B\colon (x-x_0,\xi) < 0\}$.
		\end{itemize}
	\end{enumerate}
\end{enumerate}
\end{thm}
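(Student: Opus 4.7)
The strategy is to extract a kinetic formulation from the entropy identity \eqref{eq:ent_zero_v} and then mimic the arguments of Propositions~\ref{p:reg_curv_graph} and \ref{p:reg_open_curve}. Working locally on a simply connected subdomain $U\subset\Omega$ where $v$ admits a continuous real-valued angle lift $\theta$ (so that $v$ is represented, up to sign, by $e^{i\theta}$), the class of admissible even entropies is rich enough that one can produce, via pointwise limits of $\Phi_j$ and dominated convergence, divergence-free fields of the form $\xi\,\mathbf 1_{\{\theta\in E\}}$ for every arc $E\subset\mathbb S^1$ whose endpoints differ by $\pi$ and every direction $\xi\in\mathbb S^1$ tangent to $\mathbb{RP}^1$ at those endpoints. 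Each such superlevel set of $\theta$ then solves a free transport equation whose direction is prescribed by its boundary value, exactly as in \eqref{eq:kin}.

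Items~(1) and~(2) should then follow from the Lebesgue-point propagation argument recalled in the proof of Proposition~\ref{p:reg_curv_graph} (ultimately appealing to \cite[Proposition~6]{DLOW03}): at any $x_0\in U$ whose local essential range in $\mathbb{RP}^1$ contains two linearly independent directions, the transport equations force upper and lower semicontinuity of $\theta$, hence continuity of $v$, and the same equations force $v$ to be locally constant along the line $x_0+\R v(x_0)$. Let $\mathcal S_v$ be the closed set of points where this fails; Lipschitz regularity of $v$ outside $\mathcal S_v$ follows from the regularity of the characteristic foliation there. Local finiteness of $\mathcal S_v$ is obtained by a compactness argument: each singular point carries a strictly positive topological mass (a winding number of $v$ on small circles around it, measured modulo~$\pi$), and the total mass in any compact subregion is finite.

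Item~(3) is the most delicate and is addressed by blowup. A subsequential limit $v_\infty$ of $v_r(y)=v(x_0+ry)$ as $r\to 0$ still satisfies \eqref{eq:ent_zero_v}, is locally Lipschitz on $\R^2\setminus\{0\}$ by items~(1)--(2) applied to $v_\infty$ itself, is $0$-homogeneous (by scale invariance combined with the characteristic constancy from~(2)), and has every characteristic line passing through $0$. A direct classification of such $0$-homogeneous profiles $\R^2\setminus\{0\}\to\mathbb{RP}^1$ then yields exactly the two models listed in~(3): the radial vortex $V^{x_0}$, or a half-vortex obtained by gluing $V^{x_0}$ on $\{y\cdot\xi>0\}$ with a Lipschitz profile on $\{y\cdot\xi<0\}$ matching $V^{x_0}$ along $\xi^\perp$. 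A unique continuation argument based on the same transport equations then transports this blowup structure back to $v$ itself on $B\setminus\{x_0\}$. The main obstacle is precisely this rigidity step: ruling out intermediate homogeneous profiles (such as juxtapositions of several half-vortices, or more intricate sectorial patterns) and propagating the classification from $v_\infty$ to the entire annulus $B\setminus\{x_0\}$ require careful bookkeeping of the foliation by characteristic lines at $x_0$, exploiting crucially the evenness built into the admissible test entropies.
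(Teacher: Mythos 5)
This statement is not proved in the paper at all: it is \cite[Theorem~6.5]{GMPS24}, quoted verbatim (modulo the $\mathbb{RP}^1$/$\mathbb S^1$ identification explained in Remark~\ref{r:v^2}) precisely so that the authors do \emph{not} have to reprove it. So there is no internal proof to compare yours against; what you have written has to be judged as a standalone proof of the external result.

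As such, your outline does track the strategy of the Jabin--Otto--Perthame school that underlies \cite{GMPS24}: derive a kinetic/transport formulation from the vanishing entropy productions (this is exactly what the paper itself does in Lemma~\ref{l:ent_zero_v} and \eqref{e:diveis}, for arcs of length less than $\pi$ on a local lifting), deduce continuity and constancy along characteristics by the Lebesgue-point propagation of \cite[Proposition~6]{DLOW03}, and classify singularities by blow-up. But as a proof it has genuine gaps at every step that is actually hard. First, the local finiteness of $\mathcal S_v$ is asserted via a ``strictly positive topological mass (winding number modulo $\pi$)'' at each singular point; that the winding number is nonzero at a singularity is a \emph{consequence} of the classification in item (3), not an a priori fact, so your argument for (1) presupposes (3) — and even granting a nonzero half-integer degree at each singular point, you have not explained why the total over a compact set is finite (the degrees could a priori cancel, and $v$ is only $L^\infty$ away from where you have already proved regularity). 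Second, for item (3) you name the two decisive steps — ruling out intermediate $0$-homogeneous profiles (e.g.\ several glued half-vortices, which the evenness of the entropies makes genuinely possible to write down as candidates) and propagating the blow-up structure from $v_\infty$ back to the fixed annulus $B\setminus\{x_0\}$ — but you give no argument for either; the second in particular requires a quantitative rigidity or unique-continuation statement that does not follow formally from the transport equations. Also note that your claim that for a $0$-homogeneous profile ``every characteristic line passes through $0$'' is false as stated: a $0$-homogeneous map can be constant along a family of parallel lines filling a half-plane (this is exactly the half-vortex's Lipschitz side), so the classification cannot start from that premise.

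If the intention is to use this theorem the way the paper does, the correct move is to cite \cite[Theorem~6.5]{GMPS24} (together with the reduction of Remark~\ref{r:v^2}) rather than to reprove it; if the intention is really to reprove it, the three gaps above — finiteness of $\mathcal S_v$ without circularity, the classification of homogeneous zero-states, and the transfer from the blow-up to the punctured ball — are each substantial and must be filled.
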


\begin{rem}\label{r:v^2}
Here and in what follows we implicitly identify a map $v\colon\Omega\to\mathbb{RP}^1=\mathbb S^1/\lbrace\pm 1\rbrace$ with
any lifting $\tilde v\colon\Omega\to\mathbb S^1$ such that $v=\lbrace\pm \tilde v\rbrace$. 
In \cite[Theorem~6.5]{GMPS24}, this result is stated for the map $\tilde v^2$ which uniquely determines $v$ and still satisfies \eqref{eq:ent_zero_v}. 
Theorem~\ref{thm:GMPS} is a direct translation of that statement, taking into account the aforementioned implicit identification.
\end{rem}

\begin{prop}\label{p:reg_closed_curve}
	Assume $I=\R/L\Z$ and $w$ solves \eqref{e:GAMMA}, for $\Gamma$ fulfilling \eqref{e:diffinc}.\ Then, the field $v = \Psi(\theta)$ of characteristic lines is continuous in $\Omega\setminus\s_v$, where $\s_v$ is locally finite.\ Moreover, $\s_v =\Sing(w)$.
\end{prop}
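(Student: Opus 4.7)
The first assertion, that $v$ is locally Lipschitz (hence continuous) on $\Omega\setminus\s_v$ for a locally finite $\s_v\subset\Omega$, follows at once from Lemma~\ref{l:ent_zero_v} combined with Theorem~\ref{thm:GMPS}(1). The main task is to establish the identity $\s_v=\Sing(w)$, which I split into two inclusions.

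For $\s_v\subset\Sing(w)$, I fix $x_0\in\s_v$. Theorem~\ref{thm:GMPS}(3) forces $v$ to have a vortex or half-vortex structure at $x_0$, so $v$ admits no limit there. If we had $x_0\in\Reg(w)$, Proposition~\ref{p:partial_reg_curve} would give $w\in C^1$ in a neighborhood of $x_0$, hence $Dw$ continuous at $x_0$; the continuity of $\gamma^{-1}\colon\Gamma\to I$ and of $\Psi$ (Lemma~\ref{l:psi}) would propagate this to $v=\Psi\circ\gamma^{-1}\circ Dw$, a contradiction.

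For $\Sing(w)\subset\s_v$, I fix $x_0\in\Omega\setminus\s_v$ and show that every blow-up $w_\infty\in\mathcal{B}(w)(x_0)$ is linear; Definition~\ref{def:reg} will then yield $x_0\in\Reg(w)$. Choosing a sequence $w_{x_0,r_n}\to w_\infty$ with strong $W^{1,1}_{\loc}$ gradient convergence (Theorem~\ref{t:comp}), the continuity of $v$ at $x_0$ combined with the a.e.\ convergence $v(x_0+r_n\cdot)=\Psi(\gamma^{-1}(Dw_{x_0,r_n}))\to\Psi(\gamma^{-1}(Dw_\infty))=v_\infty$ gives $v_\infty\equiv v(x_0)$ a.e.\ on $\R^2$. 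After rotating the domain, I may assume $v(x_0)=e_1$, so the characteristic lines of $w_\infty$ are horizontal. Applying Proposition~\ref{p:reg_curve_small_osc_ptwise} in small balls inside $\R^2\setminus\Sing(w_\infty)$, and using Proposition~\ref{p:partial_reg_curve} together with the fact that the $x_2$-projection of $\Sing(w_\infty)$ is $\mathcal H^1$-negligible, I deduce that $Dw_\infty$ is constant along $\mathcal H^1$-a.e.\ horizontal line. A routine integration and continuity argument then yields the representation $w_\infty(x_1,x_2)=x_1 a(x_2)+b(x_2)$ for some Lipschitz maps $a,b\colon\R\to\R^2$.

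It remains to show $a$ is constant and deduce that $w_\infty$ is linear. If $a'(x_2)\neq 0$ on a set of positive measure, then for each such $x_2$ the map $x_1\mapsto (a(x_2),x_1 a'(x_2)+b'(x_2))$ traces a nondegenerate affine line in $\R^{2\times 2}$ that must lie in $\Gamma$ on a full-measure set of $x_1$. But $\Gamma$ cannot contain any line segment: on such a segment the direction would be a scalar multiple of some $\gamma'(t)$ with $\det\gamma'(t)=0$, so two distinct matrices $X,Y$ on the segment would satisfy $\det(X-Y)=0$, contradicting the ellipticity $\det(X-Y)\geq\sigma(|X-Y|)>0$ from \eqref{e:diffinc}. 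Hence $a\equiv a_0$ is constant, and $Dw_\infty$ takes values a.e.\ in $\Gamma_{a_0}\doteq\Gamma\cap\{M\colon Me_1=a_0\}$. An open subarc contained in $\Gamma_{a_0}$ would give $\gamma(t)e_1\equiv a_0$, hence $\gamma'(t)e_1\equiv 0$, forcing $\ker\gamma'(t)=\R e_1$ and thus $\Psi(t)\equiv e_1$ on that subarc, contradicting Lemma~\ref{l:psi}. Therefore $\Gamma_{a_0}$ is a closed subset of the 1D curve $\Gamma$ with empty interior, hence totally disconnected. Since $[Dw_\infty]$ is connected by Theorem~\ref{theo:connected}, it collapses to a single matrix, so $Dw_\infty$ is a.e.\ constant and $w_\infty$ is linear. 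The main obstacle is this last rigidity step, where the nonexistence of line segments in $\Gamma$ and the total disconnectedness of $\Gamma_{a_0}$, both consequences of the interplay between the ellipticity \eqref{e:diffinc}, the degeneracy $\det\gamma'\equiv 0$, and Lemma~\ref{l:psi}, play a decisive role.
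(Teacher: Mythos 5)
Your proof is correct, and the continuity statement together with the inclusion $\s_v\subset\Sing(w)$ follow the same lines as the paper (which simply observes that $v=\Psi\circ\gamma^{-1}(Dw)$ is continuous wherever $Dw$ is, so discontinuity points of $v$ must be singular points of $w$). For the converse inclusion $\Sing(w)\subset\s_v$, however, you take a genuinely different and substantially longer route. The paper argues directly at the point $x_0\in\Omega\setminus\s_v$, with no blow-up: continuity of $v$ at $x_0$ gives $[Dw](x_0)\subset\gamma(\Psi^{-1}(\{v(x_0)\}))$, whose connected components are singletons because $\Psi$ is not constant on any open interval (Lemma~\ref{l:psi}); since $[Dw](x_0)$ is connected by Theorem~\ref{theo:connected}, it follows that $\diam([Dw](x_0))=0$, hence $x_0\in\Reg(w)$. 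You instead pass to a blow-up $w_\infty$, show its characteristic field is constantly $v(x_0)$, derive the representation $w_\infty=x_1a(x_2)+b(x_2)$, and then combine the absence of segments in $\Gamma$ (forced by \eqref{e:diffinc} and the degeneracy $\det\gamma'\equiv 0$) with the total disconnectedness of the level set $\Gamma\cap\{M\colon Me_1=a_0\}$ (again via Lemma~\ref{l:psi}) and Theorem~\ref{theo:connected} to force $Dw_\infty$ to be constant. Both arguments ultimately rest on the same two ingredients — Lemma~\ref{l:psi} and connectedness of the essential range — but yours routes them through a Liouville-type classification of entire solutions with parallel characteristics, close in spirit to what the paper does later in Proposition~\ref{p:blowups_hypK*}, at the cost of extra Fubini and integration bookkeeping that the paper's short localization of $[Dw](x_0)$ avoids entirely.
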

\begin{proof}
Lemma~\ref{l:ent_zero_v} and Theorem \ref{thm:GMPS} imply that the field $v$ of characteristic lines is continuous in $\Omega\setminus\s_v$, where $\s_v$ is locally finite.
Let us show that $\s_v=\Sing(w)$.\ Clearly $\s_v\subset \Sing(w)$, since $v=\Psi\circ\gamma^{-1}(Dw)$ and $\Psi\circ\gamma^{-1}$ is continuous by Lemma \ref{l:psi}.
To show the other inclusion we fix $x_0\in\Omega\setminus \s_v$ and argue that $x_0$ must be a regular point of $w$.\ Since $Dw = \gamma(\theta)$ and $v = \Psi(\theta)$ we can write, for all $\delta > 0$, 
\[
[Dw](B_\delta(x_0))\subset \gamma(\Psi^{-1}(A_\delta)), \text{ where $A_\delta= v(\overline {B_\delta(x_0)})$}.
\]
By continuity of $v$ at $x_0$, $\bigcap_{\delta>0}A_\delta =\lbrace v(x_0)\rbrace$, and hence $[Dw](x_0)\subset\gamma(\Psi^{-1}(\lbrace v(x_0)\rbrace)).$
By Theorem \ref{theo:connected} and \eqref{e:diffinc} we infer that $[Dw](x_0)$ is connected, 
so it is contained in a single connected component of 
$\gamma(\Psi^{-1}(\lbrace v(x_0)\rbrace))$,
which must be of the form $\gamma(\mathcal C)$ for a single connected component $\mathcal C$ of $\Psi^{-1}(\lbrace v(x_0)\rbrace)\subset I$.
Thus $\mathcal C$ is an interval, and since $\Psi$ is not constant on any open interval by Lemma~\ref{l:psi}, we get that $\mathcal C$ is a singleton, thus $\diam([Dw](x_0))=0$.\ This yields $x_0\in\Reg(w)$ and concludes the proof.
\end{proof}

\subsection{Step 6: Structure of the singular set}\label{sub:step6}

Elementary geometric considerations give a quite strong constraint on the structure of the characteristic lines.
To see this, let $\theta\colon\Omega\to I$ 
be such that $Dw=\gamma(\theta)$,
and let $v=\Psi\circ\theta\colon\Omega\to\mathbb{RP}^1$, where $\Psi$ is the characteristic direction defined in Lemma~\ref{l:psi}.
The map $v$ is continuous in $\Omega\setminus\s_v$
and locally constant in its own direction.

\begin{lem}\label{l:struct_isolated_sing}
If $\Omega\subset\R^2$ is convex and 
$v\colon\Omega\to\mathbb{RP}^1$ is continuous and locally constant in its own direction in $\Omega\setminus\s_v$, for a locally finite set $\s_v$ of discontinuity points, then $\s_v$ contains at most two points, and for any $x_0\in\s_v$ there exists $\xi_0\in\mathbb S^1$ such that $v(x_0+x)=x/|x|$ for all $x\in \Omega-x_0$ with $x\cdot\xi_0>0$.
\end{lem}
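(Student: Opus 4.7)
The plan is to first establish that around each singular point $x_0\in\s_v$ the map $v$ is radial on a full half-plane of $\Omega$, using Theorem~\ref{thm:GMPS} locally and propagation along rays globally, and then to rule out a third singular point by a structural analysis of the convex central region bounded by the three normal lines.

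For the local-to-global radial structure, I fix $x_0\in\s_v$ and apply Theorem~\ref{thm:GMPS}(3) on a small ball $B$ around $x_0$ with $2B\cap\s_v=\{x_0\}$: either (a) $v(x)=(x-x_0)/|x-x_0|$ on $B\setminus\{x_0\}$ (in which case any $\xi_0$ works), or (b) there exists $\xi_0\in\mathbb S^1$ such that the same identity holds on $B\cap H_0^+$, where $H_0^+=\{x\in\R^2\colon (x-x_0)\cdot\xi_0>0\}$, and $v$ is merely Lipschitz on the opposite half. To extend the radial formula to all of $H_0^+\cap\Omega$, the key step is to show $\s_v\cap H_0^+\cap\Omega=\emptyset$: if $z\neq x_0$ were a singular point there, pick $B_\rho(z)\Subset H_0^+\cap\Omega$ with $B_\rho(z)\cap\s_v=\{z\}$; since $\s_v$ is countable, for a dense set of directions $\xi$ close to $(z-x_0)/|z-x_0|$ with $\xi\cdot\xi_0>0$ the ray $x_0+\R_{>0}\xi$ meets $B_\rho(z)$ without crossing any other singular point, so by local constancy $v=\xi$ on that ray; density and continuity of $v$ then force $v(y)=(y-x_0)/|y-x_0|$ on $B_\rho(z)\setminus\{z\}$. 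But Theorem~\ref{thm:GMPS}(3) applied at $z$ requires $v(y)=(y-z)/|y-z|$ on at least a half-ball at $z$, and the two radial formulas coincide only on the line through $x_0$ and $z$---impossible on a 2-dimensional open set. Once $H_0^+\cap\Omega$ is free of further singular points, the radial formula $v(x_0+x)=x/|x|$ for every $x\in\Omega-x_0$ with $x\cdot\xi_0>0$ follows by propagating the local radial structure along straight rays from $x_0$ in the convex $\Omega$, using local constancy of $v$ along those rays.

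For the bound $|\s_v|\leq 2$, suppose $x_0,x_1,x_2\in\s_v$ are three distinct points. Full-vortex case (a) at any $x_i$ extends, by the argument above applied in all directions, to give $v$ radial from $x_i$ on all of $\Omega\setminus\{x_i\}$, excluding further singularities, so each $x_i$ must be in case (b), with associated direction $\xi_i$ and half-plane $H_i^+$. The previous step yields $(x_j-x_i)\cdot\xi_i\leq 0$ for $i\neq j$, and moreover $H_i^+\cap H_j^+\cap\Omega=\emptyset$ (otherwise $v$ would be simultaneously radial from $x_i$ and $x_j$ on an open 2-dimensional set, forcing it to lie on the single line through $x_i,x_j$). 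I then introduce the convex central region $R=\Omega\cap\bigcap_i\overline{H_i^-}$, which contains each $x_i$ on its relative-boundary segment $L_i\cap R$, where $L_i=\partial H_i^+$, and inherits from the radial structure on the $H_i^+$ side the boundary value $v=\tau_i$ on $L_i\cap R\setminus\{x_i\}$, with $\tau_i\perp\xi_i$ the tangent direction to $L_i$. Using continuity of $v$ on $R\setminus\{x_0,x_1,x_2\}$ and local constancy along its own direction, any characteristic segment in $R$ ending on $L_i$ must have direction $\tau_i$. If two of the $\tau_i$'s coincide, the corresponding $\xi$'s must be antiparallel to preserve $H_i^+\cap H_j^+\cap\Omega=\emptyset$, and a short case analysis on the third direction $\xi_k$ (which then cannot be parallel to either of $\xi_i,\xi_j$ without violating disjointness) rules out the configuration. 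If all three $\tau_i$'s are distinct, a continuous path in the connected region $R$ joining neighborhoods of $L_0$ and $L_1$ transports the characteristic direction of $v$ continuously from $\tau_0$ to $\tau_1$; the intermediate-direction characteristics cannot end on any $L_k$ (else they would equal $\tau_k$) and must therefore have both endpoints on $\partial\Omega$, and the resulting family of pairwise nonintersecting line segments in the convex $R$ with three boundary-constrained subfamilies in directions $\tau_i$ plus a rotating intermediate family is geometrically inconsistent, yielding the desired contradiction.

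The hardest step is making the geometric argument in the last paragraph fully rigorous; I would close it either by a direct analysis of the envelope of the rotating family of intermediate characteristics and its incompatibility with convexity of $R$, or by a degree/winding computation for $v^2\colon\partial R\setminus\{x_0,x_1,x_2\}\to\mathbb S^1$, matching the total winding against the sum of local contributions of the three case-(b) singularities.
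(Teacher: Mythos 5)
Your first step --- extending the local radial formula from Theorem~\ref{thm:GMPS}(3) to the full half-plane $H_0^+\cap\Omega$ --- is correct, and your route (a density argument over rays from $x_0$ that avoid the countable set $\s_v$, followed by continuity, and a comparison of the two incompatible radial fields $V^{x_0}$ and $V^{z}$ on an open set) is a legitimate alternative to the paper's mechanism, which instead isolates the \emph{closest} singular point $x_1$ in the half-plane and derives a contradiction from the fact that segments $[y,x_0]$ with $y$ near $x_1$ carry no singular points and hence force $v$ to be radial from $x_0$ near $x_1$. Both arguments work; yours is arguably more self-contained.

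The second part, however, has a genuine gap: the entire contradiction for a third singular point rests on the assertion that ``the resulting family of pairwise nonintersecting line segments in the convex $R$ \dots is geometrically inconsistent,'' and you acknowledge yourself that you have not made this rigorous. Neither of the two proposed repairs is executed, and neither is routine: the ``envelope of the rotating family'' claim has no precise statement to check, and a winding-number computation for $v^2$ on $\partial R$ would require controlling $v$ on the portion of $\partial R$ lying in $\partial\Omega$, which you have no information about (indeed $\Omega$ need not even be bounded or smooth). The missing idea is the concrete mechanism the paper uses: after reducing to the configuration where $x_3$ lies strictly between the two parallel lines $L_1,L_2$ and is chosen closest to the segment $[x_1,x_2]$, one shows that for each $x$ in the open segment $X=(x_1,x_2)$ the characteristic line $\ell_x$ is transversal to $[x_1,x_2]$, so it meets the line through $x_3$ parallel to $[x_1,x_2]$ at a continuously varying point $y(x)$; since $v(x)\to\xi_1^\perp$ and $v(x)\to\xi_2^\perp$ at the two ends, $y(x)$ passes from one side of $x_3$ to the other, and the intermediate value theorem produces a characteristic segment $\hat L$ joining some $x\in X$ to $x_3$. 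This segment, together with $L_3$, traps every characteristic emanating from the $-\xi_3$ side near $x_3$ in a sector and forces it through $x_3$; hence $v=V^{x_3}$ on a full punctured neighborhood of $x_3$, and then on all of $\Omega$, contradicting $x_1,x_2\in\s_v$. Without this (or an equally explicit) argument, the bound $\#\s_v\le 2$ is not established.
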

\begin{proof}
From Theorem \ref{thm:GMPS}, we know that there exist $\delta>0$ and $\xi_0\in\mathbb S^1$ such that
\begin{equation}\label{eq:VOmega+}
v(x)=V^{x_0}(x) = \frac{x-x_0}{|x-x_0|}
\text{ for all }x\in B_\delta(x_0)\text{ with }(x-x_0)\cdot\xi_0>0\,.
\end{equation}
The following crucial property is used to show \eqref{eq:VOmega+} and we will make use of it in our proof as well: due to the convexity of $\Omega$, for any $x\neq y\in\Omega\setminus\s_v$, if the characteristic lines $x+\R v(x)$ and $y+\R v(y)$ intersect at a single point $z\in\Omega$, then at least one of the segments $[x,z]$ and $[y,z]$ contains a singular point.\ 

As a preliminary step in our proof, we claim that formula \eqref{eq:VOmega+} is actually valid in the whole half-domain 
\begin{align*}
\Omega_{x_0,\xi_0} \doteq \lbrace x\in\Omega\colon (x -x_0)\cdot\xi_0> 0\rbrace\,.
\end{align*}
By convexity of $\Omega$
and local constancy of $v$ in its own direction,
this is equivalent to $\Omega_{x_0,\xi_0}\cap\mathcal S_v=\emptyset$.
Assume, by contradiction, that there exists $x_1\in \Omega_{x_0,\xi_0}\cap\s_v$.
We can choose $x_1$ to be the closest element of $\s_v$ to $x_0$, and hence a neighborhood of $[x_0,x_1]$ contains only $x_0,x_1$ as singular points.
Employ again Theorem~\ref{thm:GMPS}  at $x_1$:
 there exist $\delta_1>0$ and $\xi_1\in\mathbb S^1$ such that $B_{\delta_1}(x_1)\cap\s_v=\lbrace x_1\rbrace$
and $v=V^{x_1}$ in $B_{\delta_1}(x_1)\cap\Omega_{x_1,\xi_1}$.
For small $\e>0$ and any $y\in B_{\e}(x_1)\cap\Omega_{x_1,\xi_1}$ such that 
$y-x_1$ is not parallel to $x_1-x_0$, the segment $[y,x_0]$ contains no singular point, so $v$ must be constant along it,
that is, equal to $(y-x_0)/|y-x_0|$, 
in contradiction with the form of $v$ near $x_1$.\ Hence our claim holds: \eqref{eq:VOmega+} is valid in $\Omega_{x_0,\xi_0}$.\ 

Assume now that $\s_v$ contains at least two singular points $x_1\neq x_2\in\s_v$.\ Then there exist $\xi_1,\xi_2\in\mathbb S^1$ such that $v=V^{x_j}$ in $\Omega_{x_j,\xi_j}$ for $j=1,2$.
This implies in particular that the two sets $\Omega_{x_1,\xi_1}$ and $\Omega_{x_2,\xi_2}$ must be disjoint.\ Consider, for $j=1,2$, the two lines $L_j=x_j+\R\xi_j^\perp$ which bound $\Omega_{x_j,\xi_j}$.\ Assume by contradiction that there exists a third, distinct singular point $x_3$, and let $\xi_3\in\mathbb S^1$ be such that $v=V^{x_3}$ in $\Omega_{x_3,\xi_3}$.\ Clearly, this set must be disjoint from $\Omega_{x_j,\xi_j}$, $j=1,2$.\ From this, we deduce immediately that $x_3 \notin L_j$.\ We notice that if $L_1 = L_2$, then $\Omega = \overline{\Omega_{x_1,\xi_1}}\cup\overline{\Omega_{x_2,\xi_2}}$.\ In that case, since $(L_1\cup L_2)\cap \mathcal{S}_v = \{x_1,x_2\}$, $\s_v=\lbrace x_1,x_2\rbrace$.\ We can therefore assume that $L_1 \neq L_2$, so that the open set lying between them is nonempty:
\begin{align*}
U=\Omega\setminus \Big(
\overline{\Omega_{x_1,\xi_1}} \cup \overline{\Omega_{x_2,\xi_2}}) \neq \emptyset.
\Big)\,,
\end{align*}
Necessarily $x_3\in U \cap \s_v$, and we can assume without loss of generality that $x_3$ is the point in $U \cap \s_v$ closest to $[x_1,x_2]$.\ As $\Omega_{x_3,\xi_3}$ must be disjoint from $\Omega_{x_1,\xi_1}\cup \Omega_{x_2,\xi_2}$, $x_3$ does not belong to $[x_1,x_2]$, hence the open interval  $X\doteq [x_1,x_2]\setminus\lbrace x_1,x_2\rbrace$ does not contain any singular point (since $x_3$ is a singular point in $U$ closest to that segment), and $x_1,x_2$ are the only singular points in a neighborhood of the segment $[x_1,x_2]$.\ Assume without loss of generality that $[x_1,x_2]$ is horizontal, that $x_1$ lies left of $x_2$, and that $x_3$ lies above $[x_1,x_2]$.
Hence the lines $L_1,L_2$ enclosing $U$ are not horizontal.
For $x\in  X$, 
the line $\ell_x =x+\R v(x)$ cannot be horizontal: 
otherwise, the characteristic line 
from any point above and close enough to $x$ is nearly horizontal,
its intersection with $U$ is contained in the regular neighborhood of $[x_1,x_2]$, 
and intersects the characteristic lines $L_1$ and $L_2$ at regular points, which is impossible.
Hence, each line $\ell_x$, for 
 $x\in  X$,
 intersects the horizontal line $H$ passing through $x_3$ at a point $y(x)$.\ Since $v$ is continuous along $X$, the map $x\mapsto y(x)$ is continuous on $X$.\ As $\lim_{x \to x_{i}, x \in X}v(x) = \xi_i^\perp$, $y(x)$ lies left/right of $x_3$ on $H$ for $x \in X$ sufficiently close to $x_1$/$x_2$.\ As a consequence, there exists $x\in X$ such that $y(x)=x_3$.
This yields a characteristic segment $\hat L$ connecting $x$ to $x_3$.\ Set $L_3= x_3+\R\xi_3^\perp$. 
Observe that $x \in \Omega_{x_3,-\xi_3}$ and that $\hat L \cap L_3 = \{x_3\}$, otherwise $x \in \overline{\Omega_{x_3,\xi_3}}$ and thus either $x_1$ or $x_2$ belongs to $\Omega_{x_3,\xi_3}$, a contradiction.\ Therefore, $\hat L \subset \overline{\Omega_{x_3,-\xi_3}}$, and $\hat L \cap L_3 = \{x_3\}$.\ This implies that all characteristic lines starting from 
$z\in \Omega_{x_3,-\xi_3}$ close enough to $x_3$ 
must stay in a sector delimited by $\hat L$ and $L_3= x_3+\R\xi_3^\perp$, and must therefore intersect $x_3$.\ Hence, $v=V^{x_3}$ in $B_r(x_3)\cap \Omega_{x_3,-\xi_3}$ for some small $r > 0$ and the same holds for $\Omega_{x_3,\xi_3}$ by the first claim of the proof.\ For the same reason, $v = V^{x_3}$ in $\Omega$, in contradiction with the fact that $\lbrace x_1,x_2\rbrace\subset\s_v$. Thus such $x_3$ could not exist, and we conclude the proof.
\end{proof}

\begin{cor}\label{c:struct_isolated_sing_R2}
If $v$ is as in Lemma~\ref{l:struct_isolated_sing} and $\Omega=\R^2$, 
then there exist $s=(s_1,s_2),t = (t_1,t_2)$, $s_1\in [-\infty,+\infty)$, $t_1\in (-\infty,+\infty]$ with $s_1\leq t_1$, $t_2,s_2\in \R$, and $Q\in SO(2)$ such that for $x \neq s,t$
\begin{equation}\label{e:charline}
Q^Tv(Qx)
=\begin{cases}
\frac{x-(s_1,s_2)}{|x-(s_1,s_2)|}
&
\quad\text{ if }x_1<s_1\,,
\\
 e_2 
&
\quad\text{if }s_1\le x_1\le t_1\,,\\
\frac{x-(t_1,t_2)}{|x-(t_1,t_2)|}
&
\quad\text{if }x>t_1\,.
\end{cases}
\end{equation}
\end{cor}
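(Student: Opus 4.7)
The plan is to apply Lemma~\ref{l:struct_isolated_sing} directly with $\Omega = \R^2$, which immediately yields that $\s_v$ has cardinality at most two. I would then split the proof into the three cases $|\s_v| = 0, 1, 2$, and in each one identify a rotation $Q \in SO(2)$ and values $s_1 \in [-\infty, +\infty)$, $t_1 \in (-\infty, +\infty]$ so that \eqref{e:charline} holds.

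The first two cases are comparatively direct. If $\s_v = \emptyset$, the continuous field $v$ is locally constant along its characteristic lines on all of $\R^2$; two non-parallel characteristics necessarily meet in $\R^2$ and give conflicting values for $v$, so all characteristics are parallel and $v$ is a single constant direction. Then \eqref{e:charline} holds with $s_1 = -\infty$, $t_1 = +\infty$ after rotating that direction to $e_2$. If $|\s_v| = 1$ with singular point $p$, the Lemma furnishes a half-plane $\Omega_{p,\xi_0}$ on which $v$ is the radial field $V^p$; the complementary half-plane $H^-$ contains no singular point, and the parallel-characteristics argument together with the boundary value $v \equiv \xi_0^\perp$ on $\partial H^- \setminus \{p\}$ (inherited by continuity from the radial side) force $v \equiv \xi_0^\perp$ on $H^-$. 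Rotating so that $Q e_1 = \xi_0$ and setting $t = Q^T p$, $s_1 = -\infty$, the formula follows.

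The main case is $\s_v = \{p_1, p_2\}$ with $p_1 \neq p_2$. The Lemma applied at each $p_j$ yields half-planes $\Omega_{p_j,\xi_j}$ on which $v = V^{p_j}$, and the disjointness of these half-planes noted in the proof of Lemma~\ref{l:struct_isolated_sing} forces $\xi_1 = -\xi_2$; hence the bounding lines $L_1 = p_1 + \R\xi_1^\perp$ and $L_2 = p_2 + \R\xi_1^\perp$ are parallel, and the complementary open region in $\R^2$ is the (possibly degenerate) strip $U$ between them. The crux is to show $v \equiv \xi_1^\perp$ on $U$: for $x \in U$ with $v(x) \neq \xi_1^\perp$, the characteristic $\ell_x$ is not parallel to the strip direction and hence meets both $L_1$ and $L_2$, at points where continuity forces the value $\xi_1^\perp$; the only way to reconcile this with the local constancy of $v$ along the connected component of $\ell_x \setminus \{p_1, p_2\}$ containing $x$ is that $\ell_x$ passes through \emph{both} $p_1$ and $p_2$, which pins $x$ to the single line through $p_1, p_2$ and fixes $v(x)$ to be its direction. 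A further continuity argument then eliminates this one-dimensional exceptional set. Choosing $Q \in SO(2)$ with $Q e_1 = \xi_2$ and setting $s = Q^T p_1$, $t = Q^T p_2$, the two half-planes and the strip $U$ are mapped to the three regions in \eqref{e:charline} (with $s_1 = t_1$ in the degenerate subcase $L_1 = L_2$) and $\xi_1^\perp$ becomes $\pm e_2$ in $\mathbb{RP}^1$.

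The main obstacle I foresee is precisely the propagation of the boundary value $v = \xi_1^\perp$ on $L_1 \cup L_2 \setminus \{p_1, p_2\}$ to the interior of the strip $U$: characteristic lines originating in $U$ can cross $L_1$ or $L_2$ at points that might coincide with $p_1$ or $p_2$, where no boundary value is available, and one must carefully combine the constancy-along-connected-components property of $v$ with its continuity on $U \setminus \{p_1, p_2\}$ to exclude these exceptional characteristics without appealing to any regularity beyond what Theorem~\ref{thm:GMPS} provides.
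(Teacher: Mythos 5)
Your three-case decomposition by the cardinality of $\s_v$ is exactly the paper's, and your treatments of the cases $\s_v=\emptyset$ and $|\s_v|=2$ are essentially correct (in the two-point case the paper rules out non-vertical characteristics in the strip by showing they would force $v$ to be a global vortex, whereas you pin them to the line through $p_1,p_2$ and then remove that exceptional line by continuity; both work). However, your case $|\s_v|=1$ has a genuine error: you claim that the boundary value $v\equiv\xi_0^\perp$ on $\partial H^-\setminus\{p\}$ forces $v\equiv\xi_0^\perp$ throughout the complementary half-plane $H^-$. This is false, because a characteristic line issued from a point of $H^-$ that is not parallel to $\partial H^-$ may meet $\partial H^-$ exactly at the singular point $p$, where no boundary value is available --- precisely the pitfall you correctly flag in your two-point case but overlook here. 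The correct conclusion is a dichotomy: the characteristics in $H^-$ are either all parallel to $\partial H^-$ (giving the half-vortex you describe, with $s_1=-\infty$) or all pass through $p$ (giving the full vortex $v=V^{p}$ on $\R^2\setminus\{p\}$, corresponding to $s=t$ finite in \eqref{e:charline}). The full vortex genuinely occurs (it is listed in Remark~\ref{r:struct_sing_R2} and arises from solutions with an isolated gradient singularity), so your argument excludes a configuration that the corollary must, and does, allow. To repair it, argue as the paper does: any characteristic from $H^-$ can only meet the vertical characteristic line $\partial H^-$ at $p$, hence is either vertical or passes through $p$, and two characteristics of different types would intersect at a regular point, so all of them make the same choice.
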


\begin{rem}\label{r:struct_sing_R2}
Corollary~\ref{c:struct_isolated_sing_R2} characterizes all possible entire configurations of characteristic lines with locally finite singular set, as  members of a 5-dimensional family.
Special cases are the
constants, corresponding to $t_1=-s_1=+\infty$,
the
 single half-vortices, corresponding to 
 $(s_1,t_1)\in (\lbrace -\infty\rbrace\times \R ) \cup
 (\R\times\lbrace +\infty\rbrace)$, and the single vortices, corresponding to $(s_1-t_1,s_2-t_2)=(0,0)$.
\end{rem}

\begin{proof}[Proof of Corollary~\ref{c:struct_isolated_sing_R2}]
We distinguish three cases depending on the cardinality of $\s_v$.\

If $\s_v=\emptyset$ then the characteristic lines cannot intersect and must then all be parallel to a single direction $Q e_2$ for some $Q\in SO(2)$, which corresponds to the case $t_1=-s_1=+\infty$.

If $\s_v=\lbrace x_0\rbrace$, then there exists $\xi_0\in\mathbb S^1$ such that 
$v=V^{x_0}$ in $\Omega_{x_0,\xi_0}$.\ Writing $\xi_0=-Q e_1$ for some $Q\in SO(2)$ and replacing $v$ by $Q^T v\circ Q$ we assume without loss of generality that $\xi_0=- e_1$.
Hence $v(x)=V^{x_0}(x)$ for $x_1<s_1\doteq x_0\cdot e_1$. 
Let $s_2=x_0\cdot e_2$, so that $x_0=(s_1,s_2)$.
The characteristic lines starting from any $x\in\R^2$ with $x_1>s_1$ 
can only intersect the characteristic vertical line $\lbrace x_1=s_1\rbrace$ at $x=x_0$.
So they must either all be vertical, namely $t_1 = + \infty$, or all pass through $x_0$, i.e. $(t_1,t_2)=(s_1,s_2)$.

Assume finally that $\s_v=\lbrace x_1,x_2\rbrace$.
Let $\xi_j\in\mathbb S^1$ be such that $v=V^{x_j}$ in $\Omega_{x_j,\xi_j}$ for $j=1,2$.
Since these two half-planes must be disjoint, their boundaries are parallel, hence $\xi_2=-\xi_1$. 
Applying a rotation, we assume without loss of generality that $\xi_2=-\xi_1 = e_1$.
Then $v(x)$ is as in \eqref{e:charline} for $x_1<s_1$ and $x_1> t_1$, where $s_k=x_1\cdot e_k$ and $t_k=x_2\cdot e_k$ for $k=1,2$.
If $s_1=t_1$ we are done.\ If $s_1< t_1$,  the characteristic lines starting from any point $x \in \lbrace s_1< x_1 < t_1\rbrace$
can intersect the vertical characteristic lines which form the stripe's boundary only at $x_1$ or $x_2$. But if such an intersection happens, then $v$ must be a vortex in $\R^2$, and $\s_v$ contains only one element. So they must all be vertical, and we conclude the proof.
\end{proof}

\section{Solutions of degenerate equations}\label{s:degen_eq}

In this final section, we show a stronger and more precise version of Theorem \ref{t:Dintro}, Theorem \ref{tint:4}.\ First of all, let us introduce the degeneracy set $\mathcal{D}$ in full generality.\ 
Following \cite{dSS10,Lacombe2024},
 for a strictly monotone $G\in C^0(\R^2,\R^2)$ we define its degeneracy set $\mathcal D \doteq \mathcal D_-\cap\mathcal D_+$ which can be interpreted as the set of points 
where the symmetric parts of $DG$ and $(DG)^{-1}$ both have zero eigenvalues.\ More precisely, the sets $\mathcal D_\pm$ are:
\begin{align}\label{e:Dpm}
	\begin{aligned}
		\mathcal D_-
		&
		=
		\mathcal D_-(G)
		=\bigcap_{\lambda>0}
		\overline{ \left\lbrace                                                                             
			x\in \R^2\colon \liminf_{h\to 0} \frac{( G(x+h)-G(x),h)}{|h|^2} \leq\lambda\right\rbrace}\,,
		\\
		\mathcal D_+
		&
		=
		\mathcal D_+(G)=
		\bigcap_{\lambda>0}
		\overline{\left\lbrace
			x\in \R^2\colon \liminf_{h\to 0} \frac{( G(x+h)-G(x),h )}{|G(x+h)-G(x)|^2} \leq\lambda\right\rbrace}\,.
	\end{aligned}
\end{align}
They correspond to the smallest closed sets outside which $G$ is locally elliptic from below or above:
\begin{align*}
	x\in\mathcal D_-^c\quad\Longleftrightarrow
	\quad
	&
	\exists\lambda,\delta>0\,:\; (G(x_2)-G(x_1),x_2-x_1)\geq \lambda |x_2-x_1|^2
	\quad\forall x_1,x_2\in B_\delta(x)\,,
	\\
	x\in\mathcal D_+^c\quad\Longleftrightarrow
	\quad
	&
	\exists\lambda,\delta>0\,:\; (G(x_2)-G(x_1),x_2-x_1)\geq \lambda |G(x_2)-G(x_1)|^2
	\quad\forall x_1,x_2\in B_\delta(x)\,.
\end{align*}
The latter is also equivalent to $G(x)\in \mathcal D_-^c(G^{-1})$, 
justifying its interpretation as local  ellipticity from above.
We are ready to state our main result of this section:

\begin{thm}\label{tint:4} Let $G \in C^0(\R^2,\R^2)$ fulfill \eqref{e:monoG}, and assume that each connected component of $\mathcal D=\mathcal D_-\cap\mathcal D_+$ has image through the graph map 
	\begin{align*}
		\R^2\ni x\mapsto \left(\begin{array}{c}x\\ G(x)\end{array}\right)\in \R^{2\times 2}\,,
	\end{align*}
	contained in a $C^1$ curve. 
	If all but a finite number of these components are
	\begin{align}\label{e:cond_reg_curve}
		\text{either simply connected 
			or boundaries of strictly convex open sets,}
	\end{align}
	then any Lipschitz solution $u$ of \eqref{e:G} is $C^1$ outside a locally finite singular set.
	Moreover, that singular set is empty if all components satisfy
	\eqref{e:cond_reg_curve}.
\end{thm}
Crucial to our proof is the following result of \cite{lacombe26}, which shows that blowup limits of solutions to \eqref{e:G} are either affine or take values into $\mathcal D$.\ Recalling from Proposition~\ref{p:11} the correspondence between \eqref{e:G} and \eqref{e:di}, there is a direct reformulation in terms of differential inclusions, which will be more convenient for us.

\begin{thm}[\cite{lacombe26}]\label{t:blow_ups_K*}
Let $K\subset\R^{2\times 2}$ satisfy \eqref{e:diffinc},
 $w\in \Lip( B_1,\R^2)$ solve $Dw\in K$ a.e. in $B_1$,
 and $x_0\in B_1$.\ Any blowup limit $w_\infty\in \mathcal B(w)(x_0)$
is either affine or satisfies $Dw_\infty\in \mathcal K_*$ a.e., where
\begin{align}
\label{e:K*}
\mathcal K_*
&
=\mathcal K_{*}^{1}\cap\mathcal K_{*}^{2}
\,,
\qquad
\mathcal K_*^j
=
\bigcap_{\lambda>0}
\overline{ \left\lbrace
A\in K\colon 
\liminf_{K\ni B\to A} 
\frac{\det(A-B)}{|L_j(A-B)|^2} \leq\lambda\right\rbrace}\,,
\end{align}
and $L_j(M)$ denotes the $j$-th row of a matrix $M$, for $j=1,2$.
\end{thm}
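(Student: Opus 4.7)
My plan is to reduce the statement to its PDE reformulation via Proposition~\ref{p:11} and then argue by contradiction using the Monge-Amp\`ere identity of Theorem~\ref{tint:2bis} together with classical rigidity for $\det(D^2 u)=0$.

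Writing $w=(u,v)$, Proposition~\ref{p:11} provides a strictly monotone $G\in C^0(\pi_1(K),\R^2)$ such that $u$ solves \eqref{e:G}. A direct computation with the parameterization of $K$ in Proposition~\ref{p:11} gives, for $A,B\in K$ with $a=L_1(A)$, $b=L_1(B)$,
\begin{align*}
\frac{\det(A-B)}{|L_1(A-B)|^2}=\frac{(G(a)-G(b),a-b)}{|a-b|^2}\,,\qquad \frac{\det(A-B)}{|L_2(A-B)|^2}=\frac{(G(a)-G(b),a-b)}{|G(a)-G(b)|^2}\,,
\end{align*}
so that comparing with \eqref{e:Dpm}, the conclusion $Dw_\infty\in\mathcal K_*$ a.e.\ is equivalent to $Du_\infty\in\mathcal D=\mathcal D_-\cap \mathcal D_+$ a.e. The statement thus becomes: any non-affine blow-up $u_\infty$ of a Lipschitz solution of \eqref{e:G} satisfies $Du_\infty\in\mathcal D$ a.e.

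To establish the latter, I would assume $w_\infty\in\mathcal B(w)(x_0)$ is non-affine but that some Lebesgue point $y_0$ of $Du_\infty$ satisfies $a_0\doteq Du_\infty(y_0)\notin\mathcal D$, say WLOG $a_0\notin\mathcal D_-$. By the characterization below \eqref{e:Dpm}, there exist $\lambda,\delta>0$ such that $G$ is $\lambda$-monotone on $B_{2\delta}(a_0)\subset\R^2$. Theorem~\ref{tint:1} applied to $w_\infty$, which is a global solution of \eqref{e:di} by \eqref{e:sconv}, shows that $Du_\infty$ is continuous at $y_0$, so on an open neighborhood $U$ of $y_0$ the values of $Du_\infty$ stay in $B_\delta(a_0)$. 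Hence $u_\infty$ solves on $U$ a uniformly monotone elliptic equation, and standard difference-quotient arguments yield $u_\infty\in W^{2,2}_{\mathrm{loc}}(U)$. This $W^{2,2}$-regularity allows the very weak Hessian determinant $\mathcal D(u_\infty,u_\infty)$ of \eqref{e:vwj} to be identified with the classical $\det(D^2 u_\infty)$ on $U$; Theorem~\ref{tint:2bis} then yields $\det(D^2 u_\infty)=0$ on $U$, provided $0\notin U$, which I would ensure by a small perturbation of $y_0$, exploiting the openness of $\mathcal D^c$ and the continuity of $Du_\infty$ near $y_0$. The rigidity result of Pakzad \cite{Pakzad2004} for $W^{2,2}$-solutions of the degenerate Monge-Amp\`ere equation then produces a foliation of $U$ by line segments along which $Du_\infty$ is constant.

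The main obstacle, and the step I expect to carry most of the work, is the \textbf{globalization}: upgrading the local foliation on $U$ to global affinity of $w_\infty$ on $\R^2$. The plan is to prolongate each maximal constancy segment of $Du_\infty$ across $\R^2\setminus\{0\}$, using that two such segments carrying different values of $Du_\infty$ cannot cross at a regular point of $w_\infty$, since that would produce a rank-one connection in $K$ forbidden by \eqref{e:diffinc}; since $\mathcal H^1(\Sing(w_\infty))=0$ by Theorem~\ref{tint:2}, these segments avoid singularities generically, and the foliation extends consistently to $\R^2\setminus\{0\}$. A structural analysis in the spirit of Corollary~\ref{c:struct_isolated_sing_R2}, combined with the Lipschitz bound on $w_\infty$ and the connectedness of $[Dw_\infty]$ provided by Theorem~\ref{theo:connected}, should then force the foliation to be parallel, the vortex configurations centered at $0$ being excluded because their characteristic-direction set would fill a whole circle and hence meet $\mathcal D$, contradicting $Du_\infty(U)\subset\mathcal D^c$. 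The conclusion that $Du_\infty$ is globally constant contradicts the non-affineness of $w_\infty$, closing the argument.
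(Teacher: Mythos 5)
First, a point of reference: the paper does not prove this theorem. It is imported from \cite{lacombe26}, and the only content supplied here is the translation between \eqref{e:G} and \eqref{e:di} via Proposition~\ref{p:11} — which your opening paragraph reproduces correctly, including the identification $\mathcal K_*^1\leftrightarrow\mathcal D_-$ and $\mathcal K_*^2\leftrightarrow\mathcal D_+$. You are therefore attempting a from-scratch proof of a deep external result, and it does not close. A first, repairable, inaccuracy: $a_0\notin\mathcal D=\mathcal D_-\cap\mathcal D_+$ only gives $a_0\notin\mathcal D_-$ \emph{or} $a_0\notin\mathcal D_+$, i.e.\ one-sided ellipticity; the equation near $a_0$ is \emph{not} uniformly elliptic (this is precisely the difficulty of the De Silva--Savin setting). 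The standard difference-quotient argument then yields $W^{2,2}$ regularity for only one component of $w_\infty$: for $v_\infty$ if $a_0\notin\mathcal D_+$ (where $G$ is locally Lipschitz and coercive in $|G(b)-G(a)|$), and for $u_\infty$ only after passing to the dual equation using that $G^{-1}$ is locally Lipschitz when $a_0\notin\mathcal D_-$. Your phrase ``uniformly monotone elliptic equation'' overstates what is available, though the $W^{2,2}$ conclusion for the appropriate component can be salvaged.

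The genuine gap is the globalization step, which you yourself flag as carrying most of the work. Pakzad's developability gives segments of constancy of $Du_\infty$ reaching only $\partial U$, where $U$ is a small neighborhood of a single non-degenerate Lebesgue point; outside $U$ the gradient may take values in $\mathcal D$, where no Monge--Amp\`ere or developability structure is available, so there is no mechanism to prolong these segments across $\R^2\setminus\{0\}$. Moreover, the structural analysis ``in the spirit of Corollary~\ref{c:struct_isolated_sing_R2}'' presupposes a continuous $\mathbb{RP}^1$-valued field of characteristic directions $\Psi\circ\theta$, which this paper constructs only when the inclusion set is a degenerate $C^1$ curve (Lemma~\ref{l:psi} under assumption \eqref{eq:hyp_K*1}); in the present theorem $K$ is an arbitrary compact set satisfying \eqref{e:diffinc}, no such field exists, and the assertion that vortex configurations ``would fill a whole circle and hence meet $\mathcal D$'' has no meaning in this generality. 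The dichotomy you are trying to establish — that a single non-degenerate Lebesgue point of a blow-up forces global affineness — is exactly the hard content of \cite{lacombe26}, and nothing in Theorems~\ref{tint:1}--\ref{tint:2bis} suffices to derive it along the route you sketch.
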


In this section, we combine this property of blowup limits with Theorems \ref{t:summa}-\ref{tint:3}, Corollary \ref{cor:2} and structural assumptions on the degenerate set $\mathcal D$, or equivalently $\mathcal K_*$, in order to deduce partial regularity properties of \eqref{e:di}, or equivalently \eqref{e:G}, obtaining in particular Theorem~\ref{tint:4}.

\subsection{Regularity threshold}

For any $K\subset\R^{2\times 2}$, let us introduce the nonnegative number
\begin{align}\label{e:eps*}
\e_*(K)
\doteq
\sup
\big\lbrace \e\geq 0\colon \big(
Dw\in K\text{ a.e. in }B_1
\text{ and }\diam([Dw])\leq \e \big)
\Rightarrow w\in C^1(B_1)\big\rbrace\,,
\end{align}
which is the threshold for regularity of solutions of \eqref{e:di} with small gradient oscillations.
With this notation, the first statement in  Proposition~\ref{p:reg_curve_small_osc_ptwise} 
can be reformulated as
 $\e_*(\Gamma)>0$ for any compact connected $C^1$ curve $\Gamma$ satisfying \eqref{e:diffinc}.
Moreover, Corollary~\ref{c:visc} and Proposition~\ref{p:reg_open_curve} imply $\e_*(\Gamma)=+\infty$ if there exists $a\in\mathbb S^1$ such that the projection $a\Gamma\subset\R^2$ is the boundary of a strictly convex open set or if $\Gamma$ is simply connected.\ We can also reformulate partial regularity in terms of positivity of $\e_*(K)$:
 

\begin{lem}\label{l:partial_reg_eps*}
Let $K\subset\R^{2\times 2}$ satisfy \eqref{e:diffinc} and $\e_*(K)>0$. For any Lipschitz solution $w$ of \eqref{e:di}, 
we have that $\Sing(w)$ is closed,
$\mathcal H^1(\Sing(w))=0$ and $w\in C^1(\Omega\setminus \Sing(w))$.
\end{lem}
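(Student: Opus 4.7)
My plan is to follow the exact template of Proposition~\ref{p:partial_reg_curve}, simply replacing the role of Proposition~\ref{p:reg_curve_small_osc_ptwise} by the abstract hypothesis $\e_*(K) > 0$. The $\mathcal H^1$-negligibility of $\Sing(w)$ is delivered directly by Theorem~\ref{tint:2}, and the $C^1$ regularity on $\Omega \setminus \Sing(w)$ will follow from Lemma~\ref{l:C1} (or from the argument for openness itself) once I establish that $\Reg(w)$ is open. So the only real content is the openness of $\Reg(w)$, and this is where the threshold assumption enters.

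To prove openness I would fix $x_0 \in \Reg(w)$ and pick any $\e \in (0, \e_*(K)/2)$ (any positive $\e$ if $\e_*(K) = +\infty$). By definition of $\Reg(w)$ there exists a blow-up $w_\infty \in \mathcal B(w)(x_0)$ which is differentiable at $0$, with differential $A$ say. As spelled out in the paragraph following Theorem~\ref{tint:1}, this differentiability is enough to trigger the pointwise localization \eqref{e:linfstat} applied both to $w_\infty$ and then to $w$ itself at $x_0$: one gets $\delta > 0$ such that $|Dw(x) - A| \leq \e$ for a.e.\ $x \in B_\delta(x_0)$, and in particular
\[
\diam([Dw](B_\delta(x_0))) \leq 2\e < \e_*(K).
\]

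The key final step is to feed this into the threshold hypothesis through a rescaling. I would set $\tilde w(y) \doteq (w(x_0 + \delta y) - w(x_0))/\delta$, which is Lipschitz on $B_1$, still solves $D\tilde w \in K$ a.e., and whose essential range satisfies $\diam([D\tilde w]) = \diam([Dw](B_\delta(x_0))) < \e_*(K)$. By the very definition \eqref{e:eps*} of $\e_*(K)$, $\tilde w$ must therefore belong to $C^1(B_1)$, hence $w \in C^1(B_\delta(x_0))$. Every point of $B_\delta(x_0)$ is then a differentiability point of $w$, so $B_\delta(x_0) \subset \Reg(w)$, which proves that $\Reg(w)$ is open and in passing also yields the desired $C^1$ regularity on $\Reg(w)$. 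No step looks like a genuine obstacle: the lemma is essentially a clean decoupling of the two ingredients used in Proposition~\ref{p:partial_reg_curve}, and the only point deserving care is the passage from ``some blow-up is differentiable at $0$'' to the $L^\infty$ smallness of $Dw - A$ on a ball, which is exactly what the discussion around \eqref{e:linfstat} provides.
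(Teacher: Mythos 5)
Your proof is correct and follows essentially the same route as the paper's: reduce to openness of $\Reg(w)$ via Theorems~\ref{tint:1} and \ref{tint:2}, use the pointwise localization to get $\diam([Dw](B_\delta(x_0)))<\e_*(K)$, and invoke the definition \eqref{e:eps*} to conclude $C^1$ regularity on the ball. The explicit rescaling to $B_1$ is a harmless detail the paper leaves implicit.
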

\begin{proof}
Thanks to 
Theorem \ref{t:summa}, it suffices to show that the set $\Reg(w)$ defined in Definition~\ref{def:reg} is open. Let $\e = \e_*(K)/2>0$. 
By 
Theorem \ref{t:noninc}, if $x\in \Reg(w)$ then there exists $\delta>0$ such that $\diam([Dw](B_\delta(x)))\leq \e$, so $w$ is $C^1$ in $B_\delta(x)$ by definition \eqref{e:eps*} of $\e_*(K)$, hence $B_\delta(x)\subset\Reg(w)$. 
\end{proof}

Thanks to Theorem~\ref{t:blow_ups_K*} and 
Theorem~\ref{t:noninc}, 
we can relate the regularity threshold $\e_*$ of $K$ with the regularity thresholds of the connected components of $\mathcal K_*$.

\begin{prop}\label{p:eps*}
For any $K\subset\R^{2\times 2}$ satisfying \eqref{e:diffinc}, let $\mathcal K_*$ as in \eqref{e:K*}.\ Then,
\begin{align*}
\e_*(K)\geq
\inf
\big\lbrace \e_*(\mathcal C) \colon \mathcal C 
\text{ connected component of }\mathcal K_*
\big\rbrace\,.
\end{align*}
\end{prop}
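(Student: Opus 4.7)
The plan is to prove $\e_*(K)\geq \e_0\doteq\inf\{\e_*(\mathcal C)\colon\mathcal C\text{ connected component of }\mathcal K_*\}$ directly from the definition \eqref{e:eps*}. I fix $\e\in[0,\e_0)$ and a Lipschitz $w\colon B_1\to\R^2$ with $Dw\in K$ a.e.\ and $\diam([Dw])\leq\e$; the goal is $w\in C^1(B_1)$. Since Lemma \ref{l:C1} turns this into showing $\Reg(w)=B_1$, I fix an arbitrary $x_0\in B_1$ and aim to verify $x_0\in\Reg(w)$.

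I pick any blow-up $w_\infty\in\mathcal B(w)(x_0)$. Theorem \ref{t:blow_ups_K*} offers a dichotomy: either $w_\infty$ is affine---in which case it is differentiable at $0$ and we immediately have $x_0\in\Reg(w)$---or $Dw_\infty\in\mathcal K_*$ a.e. In the latter case, the connectedness of $[Dw_\infty]$ provided by Theorem \ref{theo:connected} forces $[Dw_\infty]$ to lie in a single connected component $\mathcal C$ of $\mathcal K_*$. Since $\mathcal C\subset K$ inherits \eqref{e:diffinc}, the threshold $\e_*(\mathcal C)$ is well-defined, and by the choice of $\e$ we have $\e<\e_0\leq\e_*(\mathcal C)$.

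The key quantitative step is to check $\diam([Dw_\infty])\leq\e$, so that the definition of $\e_*(\mathcal C)$ can be invoked for $w_\infty$. To this end, I will exploit the strong $W^{1,1}_{\loc}$ convergence of the rescalings $w_{r_n,x_0}\to w_\infty$ coming from Theorem \ref{t:comp}: along a subsequence, $Dw(x_0+r_n x)\to Dw_\infty(x)$ for a.e.\ $x\in\R^2$. Since $Dw(x_0+r_n x)\in[Dw]$ a.e.\ and $[Dw]$ is closed, this forces $[Dw_\infty]\subset[Dw]$, and hence $\diam([Dw_\infty])\leq\e$. Applying \eqref{e:eps*} for the set $\mathcal C$ (after rescaling $w_\infty$ to be defined on $B_1$) yields $w_\infty\in C^1$, so $w_\infty$ is differentiable at $0$ and $x_0\in\Reg(w)$. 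Since $x_0\in B_1$ was arbitrary, $\Reg(w)=B_1$ and Lemma \ref{l:C1} concludes.

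\textbf{Main delicacy.} The proof is essentially an assembly of ingredients, so the only genuine care needed is bookkeeping: the blow-up $w_\infty$ lives on $\R^2$ whereas \eqref{e:eps*} is stated on $B_1$, so a routine rescaling is needed; and one must verify that $\e_*(\mathcal C)$ really applies, which requires $\mathcal C$ to fulfill \eqref{e:diffinc}---automatic as a subset of $K$. No step appears to be a true obstacle.
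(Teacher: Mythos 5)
Your proposal is correct and follows essentially the same route as the paper's proof: blow up at an arbitrary point, invoke Theorem \ref{t:blow_ups_K*} and Theorem \ref{theo:connected} to place $Dw_\infty$ in a single component $\mathcal C$ of $\mathcal K_*$, note that the diameter bound passes to the blow-up, and apply the definition of $\e_*(\mathcal C)$ to conclude $x_0\in\Reg(w)$. The only difference is cosmetic: you spell out why $[Dw_\infty]\subset[Dw]$ via the strong $W^{1,1}_{\loc}$ convergence, a step the paper asserts without detail.
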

\begin{proof}
If the infimum is zero, there is nothing to show. Assume therefore that it is positive, and fix
$0<\e<\inf\lbrace\e_*(\mathcal C)\rbrace$.
Let $w\colon B_1\to \R^2$ satisfy $Dw\in K$ a.e. and $\diam([Dw](B_1))\leq \e$.
This pointwise constraint is preserved under blowup: for any $x_0\in B_1$ and $w_\infty\in \mathcal B(w)(x_0)$ we have $\diam([Dw_\infty])(B_1)\leq \e$.
By Theorem~\ref{t:blow_ups_K*}, the blowup limit $w_\infty$ is either affine or satisfies $Dw_\infty\in \mathcal K_*$ a.e., hence $Dw_\infty\in \mathcal C$ a.e. for some connected component $\mathcal C$ of $\mathcal K_*$ by Theorem \ref{theo:connected}. 
By definition of $\e_*(\mathcal C)$ this implies $w_\infty\in C^1(B_1,\R^2)$, hence $x_0\in\Reg(w)$ by Definition \ref{def:reg}.
Thus $\Reg(w)=B_1$ and $\e_*(K)\geq \e$.
\end{proof}

Of course, Proposition~\ref{p:eps*} is only interesting if we know that the connected components of $\mathcal K_*$ have a positive regularity threshold \eqref{e:eps*}.
Thanks to  Proposition~\ref{p:reg_curve_small_osc_ptwise}, this is the case if we assume that
\begin{align}\label{eq:hyp_K*1}
\begin{aligned}
&\text{every connected component of }\mathcal K_*
\text{ is included in a }C^1\text{ curve.}
\end{aligned}
\end{align}
Under this assumption, every connected component $\mathcal C$ of $\mathcal K_*$
is either a point, or satisfies $\mathcal C\subset \Gamma = \gamma(I)$ for some $I=[a,b]$ or $\R/L\Z$ and $\gamma\colon I\to\Gamma$ a $C^1$ homeomorphism with $|\gamma'|>0$. In the latter case,
since $\mathcal C$ is compact and connected and $\gamma$ is a homeomorphism, we must in fact have $\mathcal C = \gamma(J)$ for some
compact interval $J\subset I$.
So 
 \eqref{eq:hyp_K*1} is equivalent to 
every connected component $\mathcal C$ of $\mathcal K_*$ being either a point
 or a compact $C^1$ curve,
which implies $\e_*(\mathcal C)>0$ by Proposition~\ref{p:reg_curve_small_osc_ptwise}.
In fact, we will see in \textsection\ref{sec:charac} that under assumption \eqref{eq:hyp_K*1}, the property $\e_*(K)>0$ implies a much stronger conclusion than that of Lemma~\ref{l:partial_reg_eps*}.

\begin{rem}\label{r:eps*} Condition \eqref{e:diffinc} is not sufficient to ensure $\e_*(K)>0$.\ Consider indeed, for any $n \in \N$,
\begin{equation}\label{e:hw}
f_n(z) \doteq \frac{z^n}{|z|^{n - 1}}, \; h(z) \doteq -\frac{1}{3}f_3(z), \; w(z) \doteq f_2(z).
\end{equation}
Direct computations yield
\begin{equation}\label{e:dc}
\partial_z f_n(z) = \frac{n + 1}{2}\frac{z^{n-1}}{|z|^{n-1}} \text{ and } \partial_{\bar z} f_n(z) = \frac{1-n}{2}\frac{z^{n+1}}{|z|^{n +1 }} 
\end{equation}
so that $w$ solves \eqref{eq:g} for $h$ defined in \eqref{e:hw}.
Through \eqref{e:dc}-\eqref{alg}, we deduce that $w$ is Lipschitz.\ Moreover, for any $v \in \mathbb{S}^1$, \eqref{e:dc} and \eqref{e:fund} imply that
\[
|Dh(z)v|^2= \left|\frac{2}{3}v - \frac{1}{3}\frac{z^2}{|z|^2}\bar v \right|^2 =\frac{5}{9} -\frac{4}{9}\re\left(\frac{z^2}{|z|^2}(\bar v)^2\right) \le 1, \text{ with equality if and only if $(v,z) = 0$}.
\]
In particular, if $a, b \in \mathbb{C}$ are such that $|h(b)-h(a)| = |b-a|$, we see that
\[
(b-a,a + t(b-a)) = 0, \text{ for all }t \in [0,1]\,.
\]
This can only happen if $a = b$, and hence $h$ fulfills \eqref{e:contrac}.\ Since $h$ is one-homogeneous, $w_\alpha(z) \doteq \alpha w(z)$ solves \eqref{eq:g} for $h$ as in \eqref{e:hw} for all $\alpha \in \R$.\ Moreover $\Sing(w_\alpha) = \{0\}$, regardless of how small $\alpha$ is, showing that no $\eps$-regularity theorem can hold for the PDE \eqref{eq:g} for such $h$.
In other words, the regularity threshold \eqref{e:eps*} of the  set $K=\lbrace (a,h(a))\colon |a|\leq 1\rbrace$ is $\e_*(K)=0$.
 The problem in this case is, in fact, fully degenerate: for $h$ as in \eqref{e:hw}, passing to the equivalent formulation in terms of a monotone field $G$ as in \eqref{e:G} (see Propositions \ref{p:11}-\ref{p:12}), we find that $\mathcal{D} = \mathcal{D}_+\cap \mathcal{D}_- = \mathbb{\R}^2$, or equivalently $K=\mathcal K_*$.\ 
 To see this, one may use the calculations in the proof of \cite[Theorem 1.5]{Lacombe2024}, which yield $F(\mathbb{S}^1) \subset \mathcal{D}$, where $F$ is the homeomorphism
 \[
 F: \mathbb{C} \to \mathbb{C}, \quad F(z) \doteq \frac{h(z) + \bar z}{2}.
 \]
 Since $h$ is 1-homogeneous, the same calculations show that $F(t\mathbb{S}^1) \subset \mathcal{D}$ for all $t > 0$, and, since $F(\mathbb{C}) = \mathbb{C}$ and $\mathcal{D}$ is closed, we get that $\mathcal{D}$ must be, in this case, the whole plane.
\end{rem}

\subsection{Characterization of blowup limits}\label{sec:charac}

Since blowup limits $w_\infty$ are entire solutions of $Dw_\infty\in \mathcal C$ a.e. for some connected component $\mathcal C$ of $\mathcal K_*$, 
under assumption \eqref{eq:hyp_K*1} 
we know by Theorem~\ref{tint:3} that they are $C^1$ away from at most two singular points.
This statement is valid for any entire solution, 
but blowup limits have the additional property that their Hessian determinant vanishes away from the origin
 by 
 Corollary~\ref{cor:2}.
 Using this, we establish that they  have in fact at most one singularity.

\begin{prop}\label{p:blowups_hypK*}
Assume \eqref{e:diffinc} and \eqref{eq:hyp_K*1}.
Let $w\in \Lip(B_1,\R^2)$ solve $Dw\in K$ a.e., and $x_0\in B_1$.
Any blowup limit
$w_\infty\in \mathcal B(w)(x_0)$ can only be singular at $0$ and $Dw_\infty$ is 0-homogenenous.
\end{prop}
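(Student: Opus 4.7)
The plan is to combine the classification of characteristic lines from Theorem~\ref{tint:3} and Corollary~\ref{c:struct_isolated_sing_R2} with the concentration of the distributional Hessian determinant $\mathcal D(u_\infty,u_\infty)$ provided by Theorem~\ref{tint:2bis}. Assuming $w_\infty$ is not affine (otherwise both conclusions hold trivially), Theorem~\ref{t:blow_ups_K*} gives $Dw_\infty\in\mathcal K_*$ a.e., and Theorem~\ref{theo:connected} places the essential range inside a single connected component $\mathcal C$ of $\mathcal K_*$. Under \eqref{eq:hyp_K*1}, $\mathcal C$ is a compact arc of a $C^1$ curve $\Gamma$ satisfying \eqref{e:diffinc}, and after Lemma~\ref{l:reduc_degen} we may assume \eqref{e:gamma0} as well. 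Applying Theorem~\ref{tint:3} in the convex domain $\R^2$ shows that $\Sing(w_\infty)$ has at most two points, and Corollary~\ref{c:struct_isolated_sing_R2} leaves three configurations for the characteristic field $v=\Psi(\theta)$: all characteristics parallel (no singularity); one singular point $y_0$ giving either a full vortex or a half vortex; or two singular points $y_1\neq y_2$ joined by a parallel stripe.

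The fully parallel case is eliminated at once: if $v$ is constantly equal to $v_0$ on $\R^2$, then the continuous map $\theta$ takes values in $\Psi^{-1}(v_0)$, which has empty interior by Lemma~\ref{l:psi}, so connectedness of $\theta(\R^2)$ forces it to be a singleton. Then $Dw_\infty$ is constant, contradicting non-affineness. In each remaining configuration, near a singular point $y$ the function $u_\infty$ decomposes (modulo an affine term) as $|x-y|\phi_y(\theta_y)$, with angular profile $\phi_y$ continuous on $\mathbb S^1$ in the full-vortex case, or supported on a half-circle with $\phi_y$ and $\phi_y'$ both vanishing at the endpoints in the half-vortex case; these Dirichlet--Neumann conditions are forced by the continuity of $u_\infty$ and $Du_\infty$ across the parallel/radial interface.

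A direct computation in polar coordinates, modelled on $u(x)=|x|$ for which $\mathcal D(|x|,|x|)=\pi\delta_0$, then yields
\begin{align*}
\mathcal D(u_\infty,u_\infty)\;=\;\sum_{y\in \Sing(w_\infty)} c(\phi_y)\,\delta_y,\qquad c(\phi_y)=\tfrac{1}{2}\int_{\supp\phi_y}\bigl(\phi_y^2-(\phi_y')^2\bigr)\,d\theta.
\end{align*}
The crucial step is to show $c(\phi_y)<0$ strictly for every nontrivial admissible profile. In the full-vortex case this follows from the Fourier identity $c(\phi)=\tfrac{\pi}{2}\sum_k(1-k^2)(a_k^2+b_k^2)\leq 0$, with equality only on $\spn(\cos\theta,\sin\theta)$---which would make $u_\infty$ affine near $y$, contradicting $y\in\Sing(w_\infty)$. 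In the half-vortex case, one expands in the Dirichlet basis $\sin(k\theta)$ on $[0,\pi]$ and observes that the extra Neumann constraints $\phi_y'(0)=\phi_y'(\pi)=0$ kill the ground-state coefficient $a_1$, upgrading the standard Dirichlet Poincar\'e inequality $\int\phi_y^2\leq\int(\phi_y')^2$ to a strict one.

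Once $c(\phi_y)<0$ is established, Theorem~\ref{tint:2bis} concludes the argument: $\mathcal D(u_\infty,u_\infty)=\mu(\{x_0\})\delta_0$ is supported at the origin, forcing every $y\in \Sing(w_\infty)$ to coincide with $0$. This is impossible for two distinct singular points, so the stripe configuration is excluded; in the single-singularity cases, $y_0=0$, and a direct inspection of the explicit formulas for the full vortex and the half-vortex at $0$ shows that $Dw_\infty$ is $0$-homogeneous (equal to $F(x/|x|)$ for the full vortex, or constant on one half-plane and $F(x/|x|)$ on the other for the half-vortex). The main obstacle is the strict Poincar\'e inequality in the half-vortex case: without the Neumann conditions $\phi_y'(\pm\pi/2)=0$, the Dirichlet ground state $\sin\theta$ would give $c(\phi_y)=0$, and one could not separate genuine half-vortex blow-ups from hidden affine configurations with vanishing Hessian mass.
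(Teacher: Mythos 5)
Your overall strategy is essentially the paper's: classify the characteristic configurations via Theorem~\ref{tint:3} and Corollary~\ref{c:struct_isolated_sing_R2}, use the concentration of the Hessian measure from Theorem~\ref{tint:2bis}/Corollary~\ref{cor:2}, and exclude a singularity at $y_0\neq 0$ by a strict Wirtinger inequality for the half-vortex profile (the paper packages this last step as Lemma~\ref{l:hom_hessian}, applied after a second blow-up at $y_0$, rather than as a direct computation of the atom of $\mathcal D(u_\infty,u_\infty)$ at $y_0$; the two computations are identical). Your half-vortex analysis is correct in substance: equality in the Dirichlet Wirtinger inequality on the half-circle forces $\phi=a\cos\theta$, and the Neumann matching then forces $a=0$, so a genuine half-vortex at $y_0$ carries a strictly negative atom, contradicting $\supp\mathcal D(u_\infty,u_\infty)\subset\{0\}$.

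There is, however, a concrete error in the full-vortex case. Your Fourier identity $c(\phi)=\tfrac{\pi}{2}\sum_k(1-k^2)(a_k^2+b_k^2)\leq 0$ is false: the constant mode contributes \emph{positively} ($\int_{\T}\phi^2-(\phi')^2\,d\theta=2\pi a_0^2+\pi\sum_{k\geq1}(1-k^2)(a_k^2+b_k^2)$), as your own model example $\mathcal D(|x|,|x|)=\pi\delta_0>0$ already shows. Consequently $c(\phi)$ can a priori vanish for a nontrivial full-vortex profile (balancing $a_0$ against higher modes), and your argument does not exclude a single full vortex centered at some $y_0\neq 0$. This gap is easy to close with a reduction you omitted and which the paper makes at the outset: if $0\notin\Sing(w_\infty)$ then $w_\infty$ is differentiable at $0$, hence $x_0\in\Reg(w)$ and $w_\infty$ is linear by Theorem~\ref{tint:1} and Definition~\ref{def:reg}. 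So one may assume $0\in\Sing(w_\infty)$, and then any additional singular point $y_0\neq 0$ is necessarily a half-vortex (it borders a half-plane of parallel characteristics in the configuration \eqref{e:charline}), where your strict inequality does apply. You should also state explicitly that $\phi_{y_0}\not\equiv 0$ (otherwise $Dw_\infty$ would be constant near $y_0$, contradicting $y_0\in\Sing(w_\infty)=\mathcal S_v$ by Proposition~\ref{p:reg_closed_curve}), and note that the identification of the atom at $y_0$ with $c(\phi_{y_0})$ uses the exact local decomposition of $u_\infty$ as affine plus $1$-homogeneous near $y_0$ together with a scaling of radial test functions, i.e. precisely the computation in Lemma~\ref{l:hom_hessian}.
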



We start by ruling out
specific singularities  if the weak Hessian determinant vanishes.

\begin{lem}\label{l:hom_hessian}
Let $u\in C^1(B_1\setminus\lbrace 0\rbrace)$ be Lipschitz,  $1$-homogeneous,
 and satisfy $\mathcal D(u,u)=0$ in $B_1$.
If $u$ coincides with a linear function on $B_1^-=B_1\cap\lbrace x_1 < 0\rbrace$, then $u$ is linear.
\end{lem}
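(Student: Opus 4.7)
First, I would reduce to the case $u\equiv 0$ on $B_1^-$. By continuity and $1$-homogeneity, any affine function coinciding with $u$ on $B_1^-$ must have no constant term, hence is of the form $a\cdot x$. A short computation shows $\mathcal D(u,a\cdot x)=0$: since $JD(a\cdot x)=Ja$ is constant and $JDu$ is divergence-free, both double divergences in the definition \eqref{e:vwj} vanish. Replacing $u$ by $u-a\cdot x$ therefore preserves every hypothesis, including $\mathcal D(u,u)=0$. Writing $u$ in polar coordinates as $u(r,\theta)=r g(\theta)$ with $g\in C^1(\mathbb S^1)$, the vanishing on $B_1^-$ amounts to $g\equiv 0$ on $[\pi/2,3\pi/2]$; moreover, $C^1$-matching on the rays $\theta=\pm\pi/2$ forces $g(\pm\pi/2)=g'(\pm\pi/2)=0$.

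The main technical step is to establish that
\[
\mathcal D(u,u)=\tfrac{1}{2}\Big(\int_{\mathbb S^1}(g^2-(g')^2)\,d\theta\Big)\delta_0
\]
in $\mathcal D'(B_1)$. One first notes that Euler's identity $x^T D^2 u=0$ for smooth $1$-homogeneous functions gives $\det(D^2 u)=0$ pointwise, so mollifying $g$ on $\mathbb S^1$ and passing to the limit shows $\mathcal D(u,u)=0$ in $\mathcal D'(B_1\setminus\{0\})$; hence $\mathcal D(u,u)$ is a distribution supported at the origin. Under the dilations $\varphi\mapsto \varphi(\cdot/\lambda)$, the pairing $\int JDu\otimes JDu : D^2\varphi\,dx$ is invariant (the $0$-homogeneity of $Du$ cancels the Jacobian against the scaling of $D^2\varphi$), so $\mathcal D(u,u)$ is dilation-invariant and must be a multiple of $\delta_0$. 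To identify the coefficient, I would test against a radial cut-off $\eta(x)=\eta_0(|x|)$ with $\eta_0(0)=1$ vanishing near $r=1$. In the polar orthonormal frame one has $JDu=g'\,e_r-g\,e_\theta$, and $D^2\eta$ has zero off-diagonal polar component for radial $\eta$, so only the diagonal entries $(g')^2$ and $g^2$ contribute; a single integration by parts in the radial variable then yields the displayed formula.

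The hypothesis $\mathcal D(u,u)=0$ forces $\int_{-\pi/2}^{\pi/2}(g^2-(g')^2)\,d\theta=0$, since the integrand vanishes outside $[-\pi/2,\pi/2]$. On the interval $(-\pi/2,\pi/2)$ of length $\pi$, the principal Dirichlet eigenvalue of $-\partial_\theta^2$ is $\lambda_1=1$, attained on $\cos\theta$, so the Poincar\'e inequality $\int(g')^2\geq \int g^2$ holds for $g\in H^1_0((-\pi/2,\pi/2))$ with equality if and only if $g(\theta)=A\cos\theta$. The additional Neumann condition $g'(\pi/2)=0$ forces $A=0$, hence $g\equiv 0$ and $u\equiv 0$; undoing the reduction, the original $u$ is linear. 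The main obstacle is the explicit identification of the $\delta_0$-mass $c=\tfrac{1}{2}\int_{\mathbb S^1}(g^2-(g')^2)\,d\theta$: this requires handling the very weak Monge-Amp\`ere distribution throughout (since $u$ is only Lipschitz at the origin and only $C^1$ elsewhere), after which the rigidity follows from the classical Dirichlet Poincar\'e inequality on the precise interval where $\lambda_1=1$.
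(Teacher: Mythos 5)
Your proof is correct and follows essentially the same route as the paper's: reduce to $u\equiv 0$ on the half-disk using that $JDu$ is divergence-free, pass to polar coordinates, test the very weak Hessian determinant against radial functions to obtain $\int_{\mathbb S^1}\bigl((g')^2-g^2\bigr)\,d\theta=0$, and conclude via the equality case of Wirtinger's inequality on $(-\pi/2,\pi/2)$ together with the $C^1$ matching at $\theta=\pm\pi/2$. Your intermediate identification of $\mathcal D(u,u)$ as a multiple of $\delta_0$ (via support at the origin and dilation invariance) is a harmless extra step that the paper bypasses by testing directly with a radial cut-off equal to $1$ at the origin.
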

\begin{proof}
Write $u(re^{i\theta})=rf(\theta)$ for some $f\in C^1(\T)$, $\T=\R/2\pi\Z$.
Recalling \eqref{e:vwj}, $\mathcal D(u,u)=0$ is equivalent to
\begin{equation}\label{eq:D22}
\int_{B_1} ( D^2\varphi JDu,JDu)\, dx=0,
\quad
\forall\varphi\in C_c^2(B_1)\,.
\end{equation}
Since $JDu$ is divergence-free, this identity is preserved after subtracting a linear function from $u$, so we may without loss of generality assume that $u=0$ on $B_1^-$, that is, $f=0$ on $[\pi/2,3\pi/2]$.
Choosing radial test functions $\varphi(re^{i\theta})=\psi(r)$ 
with $\psi\in C_c^2([0,1))$ such that $\psi'(0)=\psi''(0)=0$ and $\psi(0)=1$,
\eqref{eq:D22} becomes
\begin{align*}
0
&
=\int_0^1\int_{\T} 
\Big(
f'(\theta)^2\psi''(r) +f(\theta)^2\frac{\psi'(r)}{r}
\Big)
\,d\theta\, rdr
\\
&
=\int_{\T}f'(\theta)^2\,d\theta \int_0^1\psi''(r)\, rdr
+\int_{\T}f(\theta)^2\, d\theta \int_0^1 \psi'(r)\, dr
=\int_{\T}f'(\theta)^2 \,d\theta  -\int_{\T}f(\theta)^2\, d\theta\,.
\end{align*}
As $f\equiv 0$ on $[\pi/2, 3\pi/2]$, we see that $f$ fulfills the equality case in Wirtinger's inequality on $[-\pi/2,\pi/2]$ with Dirichlet conditions.\ We infer $f(\theta)=a\mathbf 1_{|\theta|<\pi/2}\cos\theta$ for some $a\in\R$, thus 
$f=0$ since $f$ is $C^1$ on $\T$.
%
%
%
\end{proof}

\begin{proof}[Proof of Proposition~\ref{p:blowups_hypK*}]
As argued above, by Theorem~\ref{t:blow_ups_K*} and Theorem~\ref{theo:connected}, 
under assumptions \eqref{e:diffinc}-\eqref{eq:hyp_K*1}, the blowup limit $w_\infty$ 
is either linear or it satisfies $Dw_\infty\in \Gamma$ a.e. in $\R^2$,
 where
 $\Gamma=\gamma(I)$ is a compact connected $C^1$ curve satisfying \eqref{e:diffinc}, as in \S~\ref{sec:curves}.\ 
We can assume we are in the latter case, as otherwise there is nothing to show, and for the same reason we can  assume that $0 \in \Sing(w_\infty)$, since otherwise $x_0\in\Reg(w)$ and $w_\infty$ is affine by 
Theorem \ref{t:summa}. 
Letting $\theta=\gamma^{-1}(Dw_\infty)$ and $\Psi$ as in Lemma~\ref{l:psi},
we have therefore that 
the field of characteristic lines $v = \Psi\circ \theta$ must be of the form \eqref{e:charline}.\ On any subset  $A\subset \R^2\setminus \Sing(w_\infty)$ on which $v$ is constant, 
say $v = v_0$, $Dw_\infty$ must belong to $\Psi^{-1}(\{v_0\})$, a totally disconnected set by Lemma \ref{l:psi}, and therefore be constant.
Combining this with the structure \eqref{e:charline} of the map $v$, we deduce that $Dw_\infty$ is either constant, or $0$-homogeneous, or that it has two discontinuity points, 0 and $y_0 \neq 0$.

To prove Proposition~\ref{p:blowups_hypK*} we just need to rule out the latter case.
To do so, by 
Theorem~\ref{t:summa} it suffices to show that there exists a linear blowup of $w_\infty$ at $y_0$.\ Let $W$ be any such blowup.\ Due to the structure of the characteristic lines of $w_\infty$ \eqref{e:charline}, we infer that $DW$ is $0$-homogeneous and constant in $P_\alpha \doteq \lbrace (x,e^{i\alpha})< 0\rbrace$ for some $e^{i\alpha}\in\mathbb S^1$.\ Up to a rotation, we can assume without loss of generality that $\alpha = 0$.\
Hence its first component, $u \in C^1(\R^2\setminus \lbrace0\rbrace)$, is $1$-homogeneous and coincides with a linear function on $P_0$.\ In addition, since $w_\infty \in \mathcal{B}(w)(x_0)$, then $y_0 \notin \spt(\mathcal{D}(w_\infty))$ by Corollary \ref{cor:2}, 
and hence for the same reason $\mathcal{D}(u,u) = 0$ in $\R^2$.\ Combining this information with the structure of $u$ we can infer from Lemma \ref{l:hom_hessian} that $u$ is linear, and hence $y_0 \notin \Sing(w_\infty)$, a contradiction.
%
\end{proof}

If we know in addition that the regularity threshold \eqref{e:eps*} of $K$ is positive, 
Proposition~\ref{p:blowups_hypK*} implies discreteness of the singular set.

\begin{cor}\label{c:sing_hypK*1}
Assume \eqref{e:diffinc}, \eqref{eq:hyp_K*1}, and $\e_*(K)>0$.
Then any  Lipschitz solution $w$ of \eqref{e:di} has a locally finite  singular set $\Sing(w)$.
\end{cor}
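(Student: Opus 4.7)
The plan is to argue by contradiction. Suppose $\Sing(w)$ is not locally finite, so there exist $x_0\in B_1$ and a sequence $(x_j)\subset \Sing(w)$ with $x_j\to x_0$ and $x_j\neq x_0$. Set $r_j\doteq 2|x_j-x_0|\to 0$ and $y_j\doteq (x_j-x_0)/r_j$, so that $|y_j|=1/2$. The factor $2$ is there to keep the rescaled singular points comfortably in the interior of a unit ball. Up to a subsequence, $y_j\to y_0$ with $|y_0|=1/2$, and the rescalings $w_j\doteq w_{r_j,x_0}$ converge locally uniformly (and strongly in $W^{1,1}_{\loc}(\R^2)$ by Theorem~\ref{t:comp}) to some $w_\infty\in\mathcal B(w)(x_0)$, still satisfying $Dw_\infty\in K$ a.e. by \eqref{e:sconv}.

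The key input is then Proposition~\ref{p:blowups_hypK*}, which applies since we are under assumptions \eqref{e:diffinc} and \eqref{eq:hyp_K*1}, and which gives $\Sing(w_\infty)\subset\{0\}$. Because $y_0\neq 0$, we have $y_0\in\Reg(w_\infty)$, hence by Lemma~\ref{l:partial_reg_eps*} applied to $w_\infty$ (which requires $\e_*(K)>0$), the map $w_\infty$ is $C^1$ in a neighborhood of $y_0$ and in particular $\diam([Dw_\infty](y_0))=0$. Applying Theorem~\ref{tint:1} to $(w_j)$, $(y_j)$, and the limit $w_\infty$ (after restricting to a fixed large enough ball, e.g.\ $B_1$, which contains the $y_j$ for $j$ large), we obtain
\[
\limsup_{j\to\infty}\diam([Dw_j](y_j))\le 2\diam([Dw_\infty](y_0))=0.
\]

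To close the contradiction, we combine this with the definition of $\e_*(K)$. For $j$ large, $\diam([Dw_j](y_j))<\e_*(K)$, so by \eqref{e:point} there exists $\delta_j>0$ with $\diam([Dw_j](B_{\delta_j}(y_j)))<\e_*(K)$. Rescaling $B_{\delta_j}(y_j)$ to $B_1$ and applying the very definition \eqref{e:eps*} of $\e_*(K)$ to the resulting solution yields $w_j\in C^1(B_{\delta_j}(y_j))$, that is, $y_j\in \Reg(w_j)$. On the other hand, since rescaling commutes with the notions of regular and singular points in Definition~\ref{def:reg}, we have $\Sing(w_j)=(\Sing(w)-x_0)/r_j$, and $x_j\in\Sing(w)$ gives $y_j\in\Sing(w_j)$, a contradiction.

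The only non-routine ingredient is the use of Proposition~\ref{p:blowups_hypK*} to place $y_0$ in the regular set of the blow-up; the rest is bookkeeping combining Theorem~\ref{tint:1} (upper semicontinuity of the oscillation) with the defining property of $\e_*(K)$. The only mildly delicate technical point is ensuring $y_j, y_0$ sit strictly inside the domain on which Theorem~\ref{tint:1} is used, which is why we rescale by $2|x_j-x_0|$ rather than by $|x_j-x_0|$.
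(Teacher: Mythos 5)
Your proof is correct and follows essentially the same route as the paper: blow up at a non-isolated singular point so the nearby singularities land at points $y_j$ bounded away from the origin, invoke Proposition~\ref{p:blowups_hypK*} to see that the limit point is regular for $w_\infty$, and use Theorem~\ref{tint:1} together with $\e_*(K)>0$ to contradict $y_j\in\Sing(w_j)$. The paper phrases this last step in the contrapositive (concluding $y_\infty\in\Sing(w_\infty)$ and contradicting Proposition~\ref{p:blowups_hypK*}) and rescales by $|x_j-x_0|$ instead of $2|x_j-x_0|$, but these are only cosmetic differences.
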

\begin{proof}
Let $x_0\in \Sing(w)$ such that $B_{\delta}(x_0)\subset\Omega$, and assume that $x_0$ is not isolated.\
Then there exists a sequence $x_j\in\Sing(w)\setminus\lbrace x_0\rbrace$ such that $x_j\to x_0$.\
Let $r_j=|x_j-x_0|\to 0$ and consider the maps $w_j \doteq w_{r_j,x_0}$, see \eqref{e:resc}.\
They satisfy $0,y_j\in\Sing(w_j)$ where $y_j=(x_j-x_0)/r_j\in\mathbb S^1$.
Consider a (non relabeled) subsequence such that
$w_j\to w_\infty\in \mathcal B(w)(x_0)$ and $y_j\to y_\infty\in \mathbb S^1$.
By 
Theorem~\ref{t:noninc} and the assumption $\e_*(K)>0$ we have $0,y_\infty\in \Sing(w_\infty)$, in contradiction with Proposition~\ref{p:blowups_hypK*}.
\end{proof}

Combining Corollary~\ref{c:sing_hypK*1} with Propositions \ref{p:eps*} and \ref{p:reg_curve_small_osc_ptwise}, we deduce the following regularity result:

\begin{cor}\label{c:sing_hypK*2}
Assume \eqref{e:diffinc}, \eqref{eq:hyp_K*1}, and 
that all but a finite number of connected components $\mathcal C$ of the degenerate set $\mathcal K_*$ satisfy $\e_*(\mathcal C)=+\infty$.
Then any $w \in \Lip(B_1,\R^2)$ solving \eqref{e:di} has a locally finite  singular set $\Sing(w)$.
\end{cor}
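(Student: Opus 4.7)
The plan is to apply Corollary~\ref{c:sing_hypK*1}, for which the only missing ingredient is the positivity of the regularity threshold $\e_*(K)$ from \eqref{e:eps*}. By Proposition~\ref{p:eps*}, this positivity will follow once one shows that
\[
\inf\big\lbrace \e_*(\mathcal C) \colon \mathcal C \text{ connected component of } \mathcal K_*\big\rbrace > 0.
\]

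First I would observe that under assumption \eqref{eq:hyp_K*1}, every connected component $\mathcal C$ of $\mathcal K_*$ is a compact connected subset of a compact $C^1$ curve $\Gamma=\gamma(I)$, and since $\gamma$ is a homeomorphism onto $\Gamma$ with $|\gamma'|>0$ this forces $\mathcal C=\gamma(J)$ for some compact subinterval $J\subset I$. Thus $\mathcal C$ is itself a compact connected $C^1$ curve, and it inherits the ellipticity \eqref{e:diffinc} from $K\supset\mathcal K_*$. Combining Lemma~\ref{l:reduc_degen} (which asserts that any map $w$ solving $Dw\in\mathcal C$ a.e. is either affine or takes values in a connected, fully degenerate sub-curve satisfying \eqref{e:gamma0}) with Proposition~\ref{p:reg_curve_small_osc_ptwise} applied to that sub-curve, I obtain $\e_*(\mathcal C)>0$ for every connected component $\mathcal C$ of $\mathcal K_*$.

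By hypothesis, only finitely many components $\mathcal C_1,\ldots,\mathcal C_N$ of $\mathcal K_*$ fail to satisfy $\e_*(\mathcal C)=+\infty$. For each $k\in\{1,\dots,N\}$ the previous paragraph gives $\e_*(\mathcal C_k)>0$, and hence
\[
\inf\big\lbrace \e_*(\mathcal C) \colon \mathcal C \text{ component of }\mathcal K_*\big\rbrace \;=\; \min_{1\leq k\leq N}\e_*(\mathcal C_k)\;>\;0.
\]
Proposition~\ref{p:eps*} then yields $\e_*(K)>0$, and Corollary~\ref{c:sing_hypK*1} concludes that $\Sing(w)$ is locally finite in $B_1$, as wanted.

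The only mildly delicate point is the application of Proposition~\ref{p:reg_curve_small_osc_ptwise}, whose statement requires the target curve to be \emph{fully} degenerate in the sense of \eqref{e:gamma0}. This is exactly what Lemma~\ref{l:reduc_degen} supplies: given a non-affine $w$ with $Dw\in\mathcal C$ a.e., one passes automatically to a connected, fully degenerate $\Gamma_*\subset \mathcal C$ inside which $Dw$ must take values, and the small-oscillation $C^1$-regularity then transfers back to $w$. Once this reduction is in place, the rest of the proof is a mechanical combination of Proposition~\ref{p:eps*}, Proposition~\ref{p:reg_curve_small_osc_ptwise} and Corollary~\ref{c:sing_hypK*1}.
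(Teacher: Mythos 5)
Your proof is correct and follows exactly the route the paper intends: show each connected component of $\mathcal K_*$ is a compact connected $C^1$ curve with $\e_*(\mathcal C)>0$ (via Lemma~\ref{l:reduc_degen} and Proposition~\ref{p:reg_curve_small_osc_ptwise}), use the finiteness hypothesis to make the infimum positive, apply Proposition~\ref{p:eps*} to get $\e_*(K)>0$, and conclude with Corollary~\ref{c:sing_hypK*1}. Your explicit handling of the reduction to fully degenerate curves is a detail the paper leaves implicit, but the argument is the same.
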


Finally, recalling that $\Gamma\subset\R^{2\times 2}$ has $\e_*(\Gamma)=+\infty$ if its first-row projection is the boundary of a strictly convex open set,
or if it is a compact and simply connected $C^1$ curve $\Gamma\subset\R^{2\times 2}$ satisfying \eqref{e:diffinc}, we see that Theorem~\ref{tint:4} follows from Corollary~\ref{c:sing_hypK*2} and Proposition~\ref{p:11}.

\bibliographystyle{acm}
\bibliography{diffinc}

\begin{thebibliography}{10}

\bibitem{Astala1994}
{\sc Astala, K.}
\newblock Area distortion of quasiconformal mappings.
\newblock {\em Acta Mathematica 173}, 1 (1994), 37--60.

\bibitem{Astala2008}
{\sc Astala, K., Iwaniec, T., and Martin, G.}
\newblock {\em {Elliptic Partial Differential Equations and Quasiconformal
  Mappings in the Plane (PMS-48)}}.
\newblock Princeton Mathematical Series. Princeton University Press, 2008.

\bibitem{Bojarski1983}
{\sc Bojarski, B., and Iwaniec, T.}
\newblock {Analytical foundations of the theory of quasiconformal mappings in
  $\R^n$}.
\newblock {\em Annales Academiae Scientiarum Fennicae Series A I Mathematica
  8\/} (1983), 257--324.

\bibitem{brezis}
{\sc Brezis, H.}
\newblock {\em Functional analysis, {Sobolev} spaces and partial differential
  equations}.
\newblock Universitext. New York, NY: Springer, 2011.

\bibitem{CS04}
{\sc Cannarsa, P., and Sinestrari, C.}
\newblock {\em Semiconcave functions, {Hamilton}-{Jacobi} equations, and
  optimal control}, vol.~58 of {\em Prog. Nonlinear Differ. Equ. Appl.}
\newblock Boston, MA: Birkh{\"a}user, 2004.

\bibitem{Cao2025}
{\sc Cao, W., Hirsch, J., and Inauen, D.}
\newblock {$C^{1,{1}/{3}-}$ very weak solutions to the two dimensional
  Monge-Amp\`ere equation}.
\newblock {\em Calc. Var. Partial Differ. Equ. 64}, 5 (May 2025).

\bibitem{Cao2019}
{\sc Cao, W., and Sz\'{e}kelyhidi~Jr., L.}
\newblock {Very weak solutions to the two-dimensional Monge-Amp\`ere equation}.
\newblock {\em Science China Mathematics 62}, 6 (Apr. 2019), 1041--1056.

\bibitem{Chipot1986}
{\sc Chipot, M., and Evans, L.~C.}
\newblock {Linearisation at infinity and Lipschitz estimates for certain
  problems in the calculus of variations}.
\newblock {\em Proc. R. Soc. Edinb. A: Math. 102}, 3-4 (1986), 291--303.

\bibitem{CF14b}
{\sc Colombo, M., and Figalli, A.}
\newblock An excess-decay result for a class of degenerate elliptic equations.
\newblock {\em Discrete Contin. Dyn. Syst., Ser. S 7}, 4 (2014), 631--652.

\bibitem{CF14a}
{\sc Colombo, M., and Figalli, A.}
\newblock Regularity results for very degenerate elliptic equations.
\newblock {\em J. Math. Pures Appl. (9) 101}, 1 (2014), 94--117.

\bibitem{CT}
{\sc Colombo, M., and Tione, R.}
\newblock {Non-classical solutions of the $p$-Laplace equation}.
\newblock {\em J. Eur. Math. Soc.\/} (Apr. 2024).

\bibitem{Crandall1983}
{\sc Crandall, M.~G., and Lions, P.-L.}
\newblock {Viscosity solutions of Hamilton-Jacobi equations}.
\newblock {\em Trans. Am. Math. Soc. 277}, 1 (1983), 1--42.

\bibitem{Dafermos2006}
{\sc Dafermos, C.~M.}
\newblock Continuous solutions for balance laws.
\newblock {\em Ric. Mat. 55}, 1 (July 2006), 79--92.

\bibitem{dafermos16}
{\sc Dafermos, C.~M.}
\newblock {\em Hyperbolic conservation laws in continuum physics}, 4th~ed.,
  vol.~325 of {\em Grundlehren Math. Wiss.}
\newblock Berlin: Springer, 2016.

\bibitem{DBP04}
{\sc De~Blasi, F.~S., and Pianigiani, G.}
\newblock The {Baire} method for the prescribed singular values problem.
\newblock {\em J. Lond. Math. Soc., II. Ser. 70}, 3 (2004), 719--734.

\bibitem{DeFilippis2023}
{\sc De~Filippis, C., and Mingione, G.}
\newblock {Nonuniformly elliptic Schauder theory}.
\newblock {\em Inventiones mathematicae 234}, 3 (Sept. 2023), 1109--1196.

\bibitem{DeFilippis2025}
{\sc De~Filippis, C., and Mingione, G.}
\newblock {The sharp growth rate in nonuniformly elliptic Schauder theory}.
\newblock {\em Duke Mathematical Journal 174}, 9 (June 2025).

\bibitem{DeGiorgi1957}
{\sc De~Giorgi, E.}
\newblock Sulla differenziabilit{\`a} e l'analiticit{\`a} delle estremali degli
  integrali multipli regolari.
\newblock {\em Mem. Accad. Sci. Torino. Cl. Sci. Fis. Mat. Nat.\/} (1957).

\bibitem{DLOW03}
{\sc De~Lellis, C., Otto, F., and Westdickenberg, M.}
\newblock Structure of entropy solutions for multi-dimensional scalar
  conservation laws.
\newblock {\em Arch. Ration. Mech. Anal. 170}, 2 (2003), 137--184.

\bibitem{DLOW04}
{\sc De~Lellis, C., Otto, F., and Westdickenberg, M.}
\newblock Minimal entropy conditions for {Burgers} equation.
\newblock {\em Q. Appl. Math. 62}, 4 (2004), 687--700.

\bibitem{DPGT24}
{\sc {De Philippis}, G., Guerra, A., and Tione, R.}
\newblock {Unique continuation for differential inclusions}.
\newblock {\em Ann. Inst. Henri Poincar{\'{e}}(C) Anal. Non Lin{\'e}aire\/}
  (2023).

\bibitem{dSS10}
{\sc De~Silva, D., and Savin, O.}
\newblock Minimizers of convex functionals arising in random surfaces.
\newblock {\em Duke Math. J. 151}, 3 (2010), 487--532.

\bibitem{Evans2015}
{\sc Evans, L.~C., and Gariepy, R.~F.}
\newblock {\em Measure theory and fine properties of functions}.
\newblock Chapman \& Hall/CRC, 2015.

\bibitem{FS08}
{\sc Faraco, D., and Sz\'{e}kelyhidi, L.}
\newblock Tartar's conjecture and localization of the quasiconvex hull in
  {$\mathbb R^{2\times 2}$}.
\newblock {\em Acta Math. 200}, 2 (2008), 279--305.

\bibitem{Fonseca1995a}
{\sc Fonseca, I., and Gangbo, W.}
\newblock {\em {Degree Theory in Analysis and Applications}}.
\newblock Oxford University Press, Nov. 1995.

\bibitem{Gilbarg1977}
{\sc Gilbarg, D., and Trudinger, N.~S.}
\newblock {\em {E}lliptic {P}artial {D}ifferential {E}quations of {S}econd
  {O}rder}.
\newblock Springer, 1977.

\bibitem{GMPS24}
{\sc Goldman, M., Merlet, B., Pegon, M., and Serfaty, S.}
\newblock Compactness and structure of zero-states for unoriented
  {Aviles}-{Giga} functionals.
\newblock {\em J. Inst. Math. Jussieu 23}, 2 (2024), 941--982.

\bibitem{Hartman1959}
{\sc Hartman, P., and Nirenberg, L.}
\newblock On spherical image maps whose jacobians do not change sign.
\newblock {\em American Journal of Mathematics 81}, 4 (Oct. 1959), 901.

\bibitem{Harvey1970}
{\sc Harvey, R., and Polking, J.}
\newblock Removable singularities of solutions of linear partial differential
  equations.
\newblock {\em Acta Mathematica 125}, 0 (1970), 39--56.

\bibitem{Iwaniec2001}
{\sc Iwaniec, T., and Martin, G.}
\newblock {\em Geometric function theory and non-linear analysis}.
\newblock Clarendon press, 2001.

\bibitem{Jerrard2010}
{\sc Jerrard, R.~L.}
\newblock {Some Rigidity Results Related to Monge-Amp\`ere Functions}.
\newblock {\em Canadian Journal of Mathematics 62}, 2 (Apr. 2010), 320--354.

\bibitem{Johansson2024}
{\sc Johansson, C.}
\newblock {Wild solutions to scalar Euler-Lagrange equations}.
\newblock {\em Trans. Am. Math. Soc.\/} (2024).

\bibitem{Kirchheim2003}
{\sc Kirchheim, B.}
\newblock {R}igidity and {G}eometry of {M}icrostructures.
\newblock {\em Habilitation Thesis\/} (2003).

\bibitem{KS08}
{\sc Kirchheim, B., and Sz{\'e}kelyhidi, L.~j.}
\newblock On the gradient set of {Lipschitz} maps.
\newblock {\em J. Reine Angew. Math. 625\/} (2008), 215--229.

\bibitem{Koch2019}
{\sc Koch, H., Zhang, Y. R.-Y., and Zhou, Y.}
\newblock {An asymptotic sharp Sobolev regularity for planar infinity harmonic
  functions}.
\newblock {\em J. Math. Pures Appl. 132\/} (Dec. 2019), 457--482.

\bibitem{lacombe26}
{\sc Lacombe, T.}
\newblock Average gradient localisation for degenerate elliptic equations in
  the plane.
\newblock {\em arXiv:2601.03078\/} (2026).

\bibitem{Lacombe2024}
{\sc Lacombe, T., and Lamy, X.}
\newblock {On $C^1$ regularity for degenerate elliptic equations in the plane}.
\newblock {\em arXiv:2407.00775\/} (2024).

\bibitem{LLP24}
{\sc Lamy, X., Lorent, A., and Peng, G.}
\newblock Quantitative rigidity of differential inclusions in two dimensions.
\newblock {\em Int. Math. Res. Not. 2024}, 8 (2024), 6325--6349.

\bibitem{LLP25}
{\sc Lamy, X., Lorent, A., and Peng, G.}
\newblock On regularity and rigidity of $2\times 2$ differential inclusions
  into non-elliptic curves.
\newblock {\em Ann. Inst. H. Poincar\'{e} C Anal. Non Lin\'{e}aire\/} (2025).

\bibitem{Lewicka2017}
{\sc Lewicka, M., and Pakzad, M.~R.}
\newblock {Convex integration for the Monge-Amp\`ere equation in two
  dimensions}.
\newblock {\em Analysis \& PDE 10}, 3 (Apr. 2017), 695--727.

\bibitem{LPT94}
{\sc Lions, P.~L., Perthame, B., and Tadmor, E.}
\newblock A kinetic formulation of multidimensional scalar conservation laws
  and related equations.
\newblock {\em J. Am. Math. Soc. 7}, 1 (1994), 169--191.

\bibitem{lledos24}
{\sc Lledos, B.}
\newblock Regularity of the stress field for degenerate and/or singular
  elliptic problems.
\newblock {\em Ann. Math. Blaise Pascal 31}, 1 (2024), 83--135.

\bibitem{Marino2023}
{\sc Marino, G., and Mosconi, S.}
\newblock Lipschitz regularity for solutions of a general class of elliptic
  equations.
\newblock {\em Calc. Var. Partial Differ. Equ. 63}, 1 (Dec. 2023).

\bibitem{MingioneDarkSide}
{\sc Mingione, G.}
\newblock Regularity of minima: an invitation to the dark side of the calculus
  of variations.
\newblock {\em Appl. Math., Praha 51}, 4 (2006), 355--425.

\bibitem{Mooney2020}
{\sc Mooney, C.}
\newblock Minimizers of convex functionals with small degeneracy set.
\newblock {\em Calc. Var. Partial Differ. Equ. 59}, 2 (Mar. 2020).

\bibitem{morrey38}
{\sc Morrey, C. B.~j.}
\newblock On the solutions of quasilinear elliptic partial differential
  equations.
\newblock {\em Trans. Am. Math. Soc. 43\/} (1938), 126--166.

\bibitem{Moser1960}
{\sc Moser, J.}
\newblock {A new proof of de Giorgi{\textquotesingle}s theorem concerning the
  regularity problem for elliptic differential equations}.
\newblock {\em Communications on Pure and Applied Mathematics 13}, 3 (Aug.
  1960), 457--468.

\bibitem{Mueller1999}
{\sc M{\"u}ller, S.}
\newblock Variational {M}odels for {M}icrostructure and {P}hase {T}ransitions.
\newblock Lectures at the CIME {S}ummer {S}chool {C}alculus of {V}ariations and
  {G}eometric {E}volution {P}roblems, 1999.

\bibitem{Mueller2003}
{\sc M{\"u}ller, S., and {\v S}ver{\'a}k, V.}
\newblock {C}onvex integration for {L}ipschitz mappings and counterexamples to
  regularity.
\newblock {\em Annals of Mathematics 157}, 3 (2003), 715--742.

\bibitem{Nash1958}
{\sc Nash, J.}
\newblock {Continuity of Solutions of Parabolic and Elliptic Equations}.
\newblock {\em Am. J. Math. 80}, 4 (Oct. 1958).

\bibitem{Pakzad2004}
{\sc Pakzad, M.~R.}
\newblock {On the Sobolev Space of Isometric Immersions}.
\newblock {\em J. Differ. Geom. 66}, 1 (Jan. 2004).

\bibitem{perthame02}
{\sc Perthame, B.}
\newblock {\em Kinetic formulation of conservation laws}, vol.~21 of {\em Oxf.
  Lect. Ser. Math. Appl.}
\newblock Oxford: Oxford University Press, 2002.

\bibitem{Rickman1993}
{\sc Rickman, S.}
\newblock {\em {Quasiregular Mappings}}.
\newblock Springer Berlin Heidelberg, 1993.

\bibitem{Szekelyhidi2004}
{\sc Sz{\'{e}}kelyhidi, L.}
\newblock The {R}egularity of {C}ritical {P}oints of {P}olyconvex
  {F}unctionals.
\newblock {\em Arch. Ration. Mech. Anal. 172}, 1 (Jan. 2004), 133--152.

\bibitem{Szekelyhidi2007}
{\sc Sz{\'{e}}kelyhidi, L.}
\newblock Rank-one convex hulls in $\mathbb{R}^{2\times2}$.
\newblock {\em Calc. Var. Partial Differ. Equ. 28}, 4 (Jan. 2007).

\bibitem{sverak93}
{\sc \v{S}ver\'{a}k, V.}
\newblock On {T}artar's conjecture.
\newblock {\em Ann. Inst. H. Poincar\'{e} C Anal. Non Lin\'{e}aire 10}, 4
  (1993), 405--412.

\bibitem{zhang97}
{\sc Zhang, K.}
\newblock On connected subsets of {{\(M^{2\times 2}\)}} without rank-one
  connections.
\newblock {\em Proc. R. Soc. Edinb., Sect. A, Math. 127}, 1 (1997), 207--216.

\end{thebibliography}

\end{document}